\newtheorem{theorem}{Theorem}[section]
\newtheorem{lemma}[theorem]{Lemma}
\theoremstyle{definition}
\newtheorem{example}[theorem]{Example}
\newtheorem{num_definition}[theorem]{Definition}
\newtheorem{remark}[theorem]{Remark}
\newtheorem{question}{Question}
\newtheorem*{ack}{Acknowledgement}
\newenvironment{romenumerate}{\begin{enumerate}
 }{\end{enumerate}}
\newcounter{thmenumerate}
\newenvironment{thmenumerate}
{\setcounter{thmenumerate}{0}%
 \def\item{\par
 \refstepcounter{thmenumerate}\textup{(\roman{thmenumerate})\enspace}}
}
\newcommand{\aex}{a.e.\spacefactor=1000}
\newcommand\noproof{\qed}   
\newcommand\Phik{\Phi_{\kf}}
\newcommand\Sk{S_{\kf}}
\newcommand\Pk{\Phi_{\kf}}
\newcommand\Tk{T_{\kae}}
\newcommand\bp{{\mathfrak X}}
\newcommand\RR{{\mathbb R}}
\newcommand\eps{\varepsilon}
\newcommand\sss{{\mathcal S}}
\newcommand\dd{\,d}
\newcommand\norm[1]{\ensuremath{\|#1\|}}
\newcommand\tn[1]{\ensuremath{\|#1\|_2}}
\newcommand\bb[1]{\bigl(#1\bigr)}
\newcommand\bm[1]{\bigl|#1\bigr|}
\newcommand\downto{\searrow}
\newcommand\upto{\nearrow}
\newcommand{\refT}[1]{Theorem~\ref{#1}}
\newcommand{\refL}[1]{Lemma~\ref{#1}}
\newcommand{\refS}[1]{Section~\ref{#1}}
\newcommand{\refSS}[1]{Subsection~\ref{#1}}
\newcommand{\refD}[1]{Definition~\ref{#1}}
\newcommand{\refE}[1]{Example~\ref{#1}}
\newcommand{\refR}[1]{Remark~\ref{#1}}
\newcommand{\refQ}[1]{Question~\ref{#1}}
\newcommand\E{{\mathop{\mathbb E{}}\nolimits}}
\renewcommand\Pr{{\mathop{\mathbb P{}}\nolimits}}
\newcommand\ka{\kappa}
\newcommand\la{\lambda}
\newcommand\gam{\gamma}
\newcommand\Ga{\Gamma}
\newcommand\F{{\mathcal F}}
\newcommand\G{{\mathcal G}}
\newcommand\vx{{\bf x}}
\newcommand\bpk{{\mathfrak X}_{\kf}}
\newcommand\kae{{\ka_{\mathrm e}}}
\newcommand\kaep{{\ka_{\mathrm e}'}}
\newcommand\kf{{\undertilde{\ka}}}
\newcommand\kfn{{\undertilde{\ka}_n}}
\newcommand\bkf{\bar{\kf}}
\newcommand\kfp{\kf'}
\newcommand\kfpl{\kf^+}
\newcommand\kfM{{\kf^M}}
\newcommand\kaM{\ka^M}
\newcommand\kaeM{{\ka_{\mathrm e}^M}}
\newcommand\rhox[1]{\ensuremath{\rho_{#1}}}
\newcommand\rhogek{\rhox{\ge k}}
\newcommand\kapl{\ka^+}
\newcommand\pto{\overset{\mathrm{p}}{\to}}
\newcommand\dto{\overset{\mathrm{d}}{\to}}
\newcommand\Po{\operatorname{Po}}
\newcommand\Aut{\operatorname{Aut}}
\newcommand\aut{\operatorname{aut}}
\newcommand\emb{\operatorname{emb}}
\newcommand\cc{{\mathrm{c}}}
\newcommand\dcut{{\delta_{\square}}}
\newcommand\dloc{d_{\mathrm{loc}}}
\newcommand\set[1]{\ensuremath{\{#1\}}}
\newcommand\bigpar[1]{\bigl(#1\bigr)}
\newcommand\Bigpar[1]{\Bigl(#1\Bigr)}
\newcommand\bigabs[1]{\bigl|#1\bigr|}
\newcommand\Bigabs[1]{\Bigl|#1\Bigr|}
\newcommand\oi{[0,1]}
\newcommand\kx{r} 
\newcommand\kF{|F|} 
\newcommand\Symmr{\mathfrak S_r}
\newcommand\gnk{\ensuremath{G(n,\kf)}}
\newcommand\Mn{A_n}
\newcommand\Mij{a_{ij}}
\newcommand\kaxrij{a_{r,i,j}}
\newcommand\kaxMrij{a^{(M)}_{r,i,j}}
\newcommand\taux{A}
\newcommand\tauxij{a_{ij}}
\newcommand\N{B}
\newcommand\Nij{b_{ij}}
\newcommand\BN{{\overline\N}}
\newcommand\BNij{{\overline b}_{ij}}
\newcommand\BNii{{\overline b}_{ii}}
\newcommand\Btauxii{{\overline a}_{ii}}
\newcommand\Btauxij{{\overline a}_{ij}}
\newcommand\tauxpij{a'_{ij}}
\newcommand\tauxp{A'}
\newcommand\Btaux{{\overline \taux}}
\newcommand\Btau{{\overline\tau}}
\newcommand\tauxM{\taux^{(M)}}
\newcommand\tauxMij{a^{(M)}_{ij}}
\newcommand\tauM{\tau^M}
\newcommand\fall[1]{_{(#1)}}
\newcommand\cn[1]{\|#1\|_\square}
\newcommand\kaphi{\ka^{(\varphi)}}
\newcommand\betamax{\beta_{\mathrm{max}}}
\newcommand\xie{\xi}
\newcommand\xiek{\xie(\kf)}
\newcommand\xb{\tilde{\xi}}
\newcommand\ntoo{\ensuremath{{n\to\infty}}}
\newcommand\Mtoo{\ensuremath{{M\to\infty}}}
\newcommand\cA{\mathcal A}
\newcommand\cG{\mathcal G}
\newcommand\cQ{\mathcal Q}
\newcommand\tD{\widetilde D}
\newcommand\bbN{\mathbb N}
\newcommand\bbR{\mathbb R}
\newcommand\ga{\alpha}
\newcommand\gd{\delta}
\newcommand\gD{\Delta}
\newcommand\La{\Lambda}
\newcommand\Var{\operatorname{Var}}
\newcommand\Cov{\operatorname{Cov}}
\newcommand\Be{\operatorname{Be}}
\newcommand\Bi{\operatorname{Bi}}
\newcommand\CPo{\operatorname{CPo}}
\newcommand\MCPo{\operatorname{MCPo}}
\newcommand\ett[1]{\boldsymbol1[#1]} 
\renewcommand\ll{\lambda}
\newcommand\LL{\Lambda}
\newcommand\cpol{\CPo(\ll)}
\newcommand\cpolx{\CPo(\ll_x)}
\newcommand\mcpol{\MCPo(\LL)}
\newcommand\sumj{\sum_{j=1}^\infty}
\newcommand\sumjf{\sum_{j\in V(F)}}
\newcommand\sumfj{\sum_{F,j}}
\newcommand\suma{\sum_{\ga}}
\newcommand\prodj{\prod_{j=1}^\infty}
\newcommand\pgf{probability generating function}
\newcommand\gfcpol{\varphi_{\cpol}}
\newcommand\gfcpolx{\varphi_{\cpolx}}
\newcommand\gfmcpol{\varphi_{\mcpol}}
\newcommand\ints{\int_\sss}
\newcommand\lfj{\la_{F,j}}
\newcommand\lxfj{\lfj(x)}
\newcommand\lxd{\ll_x\set d}
\newcommand\dfj{{d_F(j)}}
\newcommand\dn{D_n}
\newcommand\nl{n_\ell}
\newcommand\nfj{N_{F,j}}
\newcommand\dq{D'}
\newcommand\qqw{^{-1/2}}
\newcommand\qw{^{-1}}
\newcommand\qww{^{-2}}
\newcommand\dtv{d_{\mathrm{TV}}}
\newcommand\pfjax{p_{F,j,\ga}(\vx)}
\newcommand\fj{_{F,j}}
\newcommand\hlfj{\widehat\la\fj}
\newcommand\hlfjx{\hlfj(\vx)}
\newcommand\hX{\widehat X}
\newcommand\hLa{\widehat \ll}
\newcommand\CS{Cauchy--Schwarz}
\newcommand\CSineq{\CS{} inequality}
\newcommand\M{^M}
\newcommand\nfm{N_F^M}
\newcommand\gnkm{\ensuremath{G(n,\kfM)}}
\newcommand\lxdm{\ll_x\M\set d}
\newcommand\mcpolm{\MCPo(\LL\M)}
\newcommand\intsf{\int_{\sss^{\kF}}}
\newcommand\sumF{\sum_{F\in\F}}
\newcommand\Gt{{\widetilde G}}
\newcommand\Op{O_{\mathrm{p}}}
\newcommand\op{o_{\mathrm{p}}}
\newcommand\opn{\op(n)}
\newcommand\tH{{\widetilde H}}
\newcommand\nr{n_{\mathrm r}}
\newcommand\nd{n_{\mathrm d}}
\newcommand\nx{n_{\mathrm x}}
\newcommand\gbx{\beta}
\newcommand\gb{\gbx_1}
\newcommand\gbb{\gbx_2}
\newcommand\gbbb{\gbx_3}
\newcommand\ha{{\hat a}}
\newcommand\Gp{G^{\langle p \rangle}}
\newcommand\Gpp{G^{[p]}}
\newcommand\isom{\cong}
\newcommand\Fp{F^{\langle p \rangle}}
\newcommand\kap{\ka^{\langle p \rangle}}
\newcommand\kfq{\kf^{\langle p \rangle}}
\newcommand\Gr{\G^{\mathrm r}}
\newcommand\Grt{\G^{\mathrm r}_t}
\newcommand\tone{\ts_1}
\newcommand\ts{{\tilde t}}
\newcommand\traw{t}
\newcommand\ii[1]{\int #1}
\newcommand\iid{i.i.d.}
\begin{document}
\title{Sparse random graphs with clustering}

\author{B\'ela Bollob\'as\thanks{Department of Mathematical Sciences,
University of Memphis, Memphis TN 38152, USA}
\thanks{Trinity College, Cambridge CB2 1TQ, UK}
\thanks{Research supported in part by NSF grants CNS-0721983, CCF-0728928
and DMS-0906634, and ARO grant W911NF-06-1-0076}
\and Svante Janson%
\thanks{Department of Mathematics, Uppsala University,
 PO Box 480, SE-751 06 Uppsala, Sweden}
\and Oliver Riordan%
\thanks{Mathematical Institute, University of Oxford, 24--29 St Giles', Oxford OX1 3LB, UK}}
\date{October 21, 2009}
\maketitle

\begin{abstract}
In 2007 we introduced a general model of sparse random graphs with
(conditional) independence between the edges. The aim of this paper is to present an
extension of this model in which the edges are far from independent,
and to prove several results about this extension.
The basic idea is to construct the random graph by adding
not only edges but also other small graphs. In other words,
we first construct an inhomogeneous random hypergraph with (conditionally)
independent hyperedges,
and then replace each hyperedge by a (perhaps complete) graph.
Although flexible enough to produce graphs with significant dependence between 
edges, this model is nonetheless mathematically tractable. Indeed,
we find the critical point where a giant component emerges in full generality,
in terms of the norm of a certain integral operator, and relate the size
of the giant component to the survival probability of a certain
(non-Poisson) multi-type branching process.
While our main focus is the phase transition, we also study the degree
distribution and the numbers of small subgraphs. We illustrate the model
with a simple special case that produces graphs with power-law degree sequences
with a wide range of degree exponents and clustering coefficients.
\end{abstract}

\section{Introduction and results}\label{sec_ir}

In~\cite{BJR}, a very general model for sparse random graphs
was introduced, corresponding to an inhomogeneous version of $G(n,c/n)$,
and many properties of this model were determined, in particular,
the critical point of the phase transition where the giant component
emerges.
Part of the motivation was to unify many of the new random graph
models introduced as approximations to real-world networks.
Indeed, the model of~\cite{BJR} includes many of these models as exact special
cases, as well as the `mean-field' simplified versions of many of the more
complicated models. (The original forms are frequently too complex 
for rigorous mathematical analysis, so such mean-field versions
are often studied instead.)
Unfortunately, there are many models with key features that are
not captured by their mean-field versions, and hence not by the
model of~\cite{BJR}. The main problem
is that many real-world networks exhibit {\em clustering}: for example,
while there are $n$ vertices and only $5n$ edges, there may be $10n$ 
triangles, say. In contrast, the model of~\cite{BJR}, like
$G(n,c/n)$, produces graphs that contain essentially no triangles or short cycles.

Most models introduced to approximate particular real-world networks
turn out to be mathematically intractable, due to the dependence
between edges. Nevertheless, many such models have been studied;
as this is not our main focus, let us just list a few examples of early work
in this field. One of the starting points in this area was the
(homogeneous) `small-world' model of Watts and Strogatz~\cite{WS}. Another
was the observation of power-law degree sequences in various networks
by Faloutsos, Faloutsos and Faloutsos~\cite{FFF}, among others. 
Of the new inhomogeneous models,  perhaps the most studied
is the `growth with preferential attachment' model introduced in an imprecise
form by Barab\'asi and Albert~\cite{BAsc}, later made precise
as the `LCD model' by Bollob\'as and Riordan~\cite{diam}. Another is the `copying'
model of Kumar, Raghavan, Rajagopalan, Sivakumar, Tomkins and Upfal~\cite{Upfal},
generalized by Cooper and Frieze~\cite{CF}, among others.
For (early) surveys of work in this field see, for example, Barab\'asi and Albert~\cite{BAsurv},
Dorogovtsev and Mendes~\cite{DMsurv}, or Bollob\'as and Riordan~\cite{BRsurv}.

Roughly speaking, any sparse model with clustering
must include significant dependence between edges, so one might expect
it to be impossible to construct a general model of this type that
is still mathematically tractable. However, it turns out that one can do this.
The model that we shall define is essentially a generalization of that
in~\cite{BJR}, although we shall handle certain technicalities
in a different way here.

Throughout this paper we use standard graph theoretic notation as in~\cite{Bollobas:MGT}.
For example, if $G$ is a graph then $V(G)$ denotes its vertex set,
$E(G)$ its edge set, $|G|$ the number of vertices, and $e(G)$ the number of edges.
We also use standard notation for probabilistic asymptotics as in \cite{JLR}:
a sequence ${\mathcal E}_n$ of events holds {\em with high probability}, or {\em whp},
if $\Pr({\mathcal E}_n)\to 1$ as $n\to\infty$. If $(X_n)$ is a sequence of random variables
and $f$ is a deterministic function, then $X_n=\op(f(n))$ means $X_n/f(n)\pto 0$,
where $\pto$ denotes convergence in probability.

\subsection{The model}
Let us set the scene for our model. By a {\em type space} we simply mean a
probability space $(\sss,\mu)$.
Often, we shall take $\sss=[0,1]$ or $(0,1]$ with $\mu$ Lebesgue measure.
Sometimes we consider $\sss$ finite. As will become clear,
any model with $\sss$
finite can 
be realized as a model with type space $[0,1]$, but sometimes the notation
will be simpler with $\sss$ finite.
More generally, as shown in~\cite{SJ210},
every instance of the random
graph model we are going to describe can be realized 
as an equivalent model with type space $[0,1]$. 
Hence, when it comes to proofs, 
we lose no generality by taking $\sss=[0,1]$, but we usually
prefer allowing an arbitrary type space, which is more flexible for
applications. For example, as with the model in~\cite{BJR},
type spaces such as $\sss=[0,1]^2$
are likely to be useful for geometric applications, as in~\cite{kerapp}.

Let $\F$ consist of one representative of each isomorphism class of
finite connected graphs, chosen so that if $F\in \F$ has $r$
vertices then $V(F)=[\kx]=\{1,2,\ldots,\kx\}$.  
Given $F\in \F$ with $\kx$ vertices,
let $\ka_F$ be a measurable function from $\sss^\kx$ to $[0,\infty)$;
we call $\ka_F$ the {\em kernel} corresponding to $F$. 
A sequence $\kf=(\ka_F)_{F\in\F}$ is a {\em kernel family}. 
In our results we shall impose an additional integrability
condition on $\kf$, but this is not needed to define the model.

Let $\kf$ be a kernel family and $n$ an integer;
we shall define a random graph $G(n,\kf)$ with vertex set
$[n]=\set{1,2,\ldots,n}$.
First let $x_1,x_2,\ldots,x_n\in \sss$ be \iid\ (independent
and identically distributed) with the distribution $\mu$.
Given $\vx=(x_1,\ldots,x_n)$, construct $G(n,\kf)$ as follows, starting
with the empty graph. For each $\kx$ and each $F\in \F$ with $|F|=\kx$,
and for every $\kx$-tuple of distinct vertices
$(v_1,\ldots,v_\kx)\in [n]^\kx$, add a copy
of $F$ on the vertices $v_1,\ldots,v_\kx$ (with vertex $i$ of $F$
mapped to $v_i$) with probability 
\begin{equation}\label{pdef}
 p=
p(v_1,\dots,v_\kx;F)=
\frac{\ka_F(x_{v_1},\ldots,x_{v_\kx})}{ n^{\kx-1}},
\end{equation}
all these choices being independent. If $p>1$, then we simply add a copy
with probability $1$. 
We shall often call the added copies of the various $F$ that
together form \gnk{} {\em atoms} since, in our construction of \gnk,
they may be viewed as indivisible building blocks. Sometimes we
refer to them as
\emph{small graphs}, although there is in general no bound on their sizes.
Usually we think of $G(n,\kf)$ as a simple graph, in which case
we simply replace any multiple edges by single edges. Typically there will be
very few multiple edges, so this makes little difference.

Note that we assume that the atoms of \gnk{} are connected.
The extension to the case where some atoms
may be disconnected is discussed in
\refS{Sdisconnected}. 

The reason for dividing by $n^{\kx-1}$ in \eqref{pdef}
is that we wish to consider sparse graphs; indeed,
our main interest is the case when $G(n,\kf)$ has $O(n)$ edges. As it turns out,
we can be slightly more general; however, when $\kappa_F$
is integrable (which we shall always assume), the expected number
of added copies of each graph $F$ is $O(n$).
Note that all incompletely specified integrals are with respect
to the appropriate $r$-fold product measure $\mu^r$ on $\sss^r$.

\begin{remark}
There are several plausible choices for the normalization
in \eqref{pdef}. 
The one we have chosen means
that if $\ka_F=c$ is constant, then
(asymptotically) 
there are
on average $cn$ copies of $F$ in total, and
each vertex is on average in 
$\kx c$
copies of $F$. An alternative is to divide the expression in \eqref{pdef} by $\kx$;
then (asymptotically) 
each vertex would on average 
be in $c$ copies of $F$. 
Another alternative, natural when adding cliques only but less so in the
general case, would be to divide by $\kx!$; this is equivalent to
considering unordered sets of $\kx$ vertices instead of ordered $\kx$-tuples.
When there is only one kernel, corresponding to adding edges,
this would correspond to the normalization used in
\cite{BJR}, and in particular to that of the classical model $G(n,c/n)$;
the normalization we use here differs from this by a factor of 2.
Yet another normalization would be to divide by $\aut(F)$, the
number of automorphisms of $F$; this is equivalent to considering the
distinct copies of $F$ in $K_n$, which is natural but 
leads to extra factors $\aut(F)$ in many formulae, and
we do not find that the advantages outweigh the disadvantages.
\end{remark}

As in~\cite{BJR}, there are several minor variants of $G(n,\kf)$;
perhaps the most important is the {\em Poisson multi-graph} version of $G(n,\kf)$.
In this variant,
for each $F$ and each $\kx$-tuple, we add a Poisson $\Po(p)$
number of copies of $F$ with this vertex set, 
where $p$ is given by \eqref{pdef}, and we keep multiple edges.

Alternatively, we
could add a Poisson number of copies and delete multiple edges, which
is the same as adding one copy with probability $1-e^{-p}$ and no copy
otherwise.
More generally, we could add one copy of $F$ with probability $p+o(p)$,
and two or more copies with probability $o(p)$. As long as the error
terms are uniform over graphs $F$ and $\kx$-tuples $(v_1,\ldots,v_\kx)$,
all our results will apply in this greater generality. Since this
will follow by simple sandwiching arguments (after reducing
to the `bounded' case; see \refD{Dbounded}), we shall consider
whichever form of the model is most convenient; usually
this turns out to be the Poisson multi-graph form.

\begin{remark}
Under certain mild conditions, the results of \cite{SJ212}
imply a strong form of asymptotic
equivalence between the various versions of the model.
For example,
if we add copies of $F$ with probability
$p+O(p^2)$, where the implied constant is uniform over $F$ and $(v_1,\dots,v_\kx)$, and
\begin{equation}\label{ea}
 \E\sum_F \sum_{v_1,\dots,v_{|F|}} p(v_1,\dots,v_{|F|};F)^3 =o(1),
\end{equation}
then the resulting model is equivalent to that with probability $p$,
in that the two random graphs can be coupled to agree whp;
this is a straightforward modification of  \cite[Corollary 2.13(i)]{SJ212}.
Extending the argument in \cite[Example 3.2]{SJ212}, it can be shown
that \eqref{ea} holds if
\begin{equation*}
 \sum_{F\in\F} \int_{\sss^{|F|}} \ka_F^{\kF/(\kF-1)} <\infty.
\end{equation*}
This certainly holds for the bounded kernel families (see~\refD{Dbounded})
that we consider in most of our proofs, although \eqref{ea} is easy
to verify directly for such kernel families.
\end{remark}

In the special case where all $\ka_F$ are zero apart from $\ka_{K_2}$, the kernel
corresponding to an edge, we recover (essentially) a special case of the model
of~\cite{BJR}; we call this the {\em edge-only} case, since we add only edges,
not larger graphs.
We write $\ka_2$ for $\ka_{K_2}$.
Note that in the edge-only case, given $\vx$, two vertices $i$ and $j$
are joined with probability
\begin{equation}\label{2sym}
 \frac{\ka_2(x_i,x_j)+\ka_2(x_j,x_i)}{n}
  +O\left(\frac{(\ka_2(x_i,x_j)+\ka_2(x_j,x_i))^2}{n^2}\right).
\end{equation}
The correction
term will never matter, so we may as well replace $\ka_2$ by its
symmetrized 
version. In fact, we shall always assume that $\ka_F$ is invariant under
the action of the automorphism group $\Aut(F)$ of the graph $F$.
In other words, if $\phi:[r]\to[r]$ is a permutation such that $\phi(i)\phi(j)\in E(F)$
if and only if $ij\in E(F)$, then we assume that
$\ka_F(\phi(x_1),\ldots,\phi(x_r))=\ka_F(x_1,\ldots,x_r)$
for all $x_1,\ldots,x_r\in\sss$.
In the Poisson version, or if we add copies of graphs $F$ with
probability $1-e^{-p}$, 
the correction terms in \eqref{2sym} and its generalizations
disappear:
in the edge-only case, given $\vx$, vertices $i$ and $j$
are joined with probability
$1-\exp\bigpar{-(\ka_2(x_i,x_j)+\ka_2(x_j,x_i))/n}$, and in general
we obtain exactly the same random graph if we
symmetrize each $\ka_F$ with respect to $\Aut(F)$.

For any kernel family $\kf$, let $\kae$ be the corresponding
{\em edge kernel}, defined by
\begin{equation}\label{kaedef}
 \kae(x,y) = \sum_F \sum_{ij\in E(F)}
\int_{\sss^{V(F)\setminus \{i,j\}}}
  \ka_F(x_1,\ldots,x_{i-1},x,x_{i+1},\ldots,x_{j-1},y,x_{j+1},\ldots,x_{|F|}),
\end{equation}
where the second sum runs over all $2e(F)$ ordered pairs $(i,j)$ with
$ij\in E(F)$,
and we integrate over all variables apart from $x$ and $y$.
Note that the sum need not always converge; since every term is positive
this causes no problems: we simply allow $\kae(x,y)=\infty$
for some $x,y$. 
Given $x_i$ and $x_j$,
the probability that $i$ and $j$ are joined
in $G(n,\kf)$ is at most $\kae(x_i,x_j)/n$, and this upper bound is
typically quite sharp.
For example, if $\kf$ is bounded in the sense of
\refD{Dbounded} below, then the probability is 
$\kae(x_i,x_j)/n+O(1/n^2)$. 
In other words,
$\kae$ captures the edge probabilities in $G(n,\kf)$, but not the correlations.

Before proceeding to deeper properties, let us note that the expected
number of added copies of $F$ is $(1+O(n\qw))n\int_{\sss^{|F|}} \ka_F$.
Unsurprisingly, the actual number turns out to be
concentrated about this mean. 
Let 
\begin{equation*}
  \xiek
= \sum_{F\in\F} e(F)\int_{\sss^{|F|}} \ka_F 
=\frac12\int_{\sss^2} \kae
\le\infty
\end{equation*}
be the {\em asymptotic edge density} of $\kf$. 
Since every copy of $F$ contributes $e(F)$ edges,
the following theorem is almost obvious, provided we can ignore
overlapping edges. A formal proof will be given in
\refS{sec_subgraphs}.
(A similar result for the total number of atoms is given
in \refL{badv}.)
\begin{theorem}\label{Tedges}
As \ntoo, $e(\gnk)/n$
converges in probability to the asymptotic edge density $\xiek$. 
In other words, if $\xiek<\infty$ then $e(\gnk)=\xiek n+\opn$, and if
$\xiek=\infty$ then, for every constant $C$, we have $e(\gnk)>Cn$ whp.
Moreover, $\E e(\gnk)/n\to\xiek\le\infty$ 
\end{theorem}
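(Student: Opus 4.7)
The plan is to decompose the edge count into contributions from individual atoms and then account for edges shared between distinct atoms (multiplicities). Let $N_F$ denote the number of copies of $F$ added in constructing $\gnk$, and set $A=\sumF e(F)N_F$, the edge count in the underlying multi-graph. Then $e(\gnk)\le A$ and $A-e(\gnk)$ counts the excess from multiplicities, so it suffices to prove $A/n\pto\xiek$, $\E A/n\to\xiek$, and $A-e(\gnk)=\opn$.

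I would first handle the bounded case in which $\ka_F\equiv 0$ unless $|F|\le M$ and $\ka_F\le M$ pointwise, so that only finitely many $F$ contribute. For each such $F$ with $|F|=r$, the addition indicators are, conditional on $\vx$, independent Bernoullis with success probability $\ka_F(x_{v_1},\ldots,x_{v_r})/n^{r-1}$ once $n^{r-1}\ge M$. Summing over the $(n)_r$ ordered tuples and taking expectations over $\vx$ gives $\E N_F=n\int\ka_F+O(1)$. For concentration, the conditional variance $\Var(N_F\mid\vx)\le\E(N_F\mid\vx)=O(n)$ by Bernoulli independence, and $\Var\E(N_F\mid\vx)=O(n)$ by a bounded-differences estimate, since changing a single $x_i$ affects only the $O(n^{r-1})$ tuples through $i$, each by at most $M/n^{r-1}$. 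Chebyshev then yields $N_F/n\pto\int\ka_F$, and summing over the finitely many $F$ gives $A/n\pto\xi(\kfM)$.

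For multiplicities, I would pass to the Poisson multi-graph form (asymptotically equivalent by the remark following \refD{Dbounded}). Given $\vx$, the number of edges placed between any ordered pair $(i,j)$ is $\Po(\mu_{ij})$ with $\mu_{ij}\le\kae(x_i,x_j)/n$, and the expected excess $X-\ett{X\ge 1}$ of $X\sim\Po(\mu)$ is at most $\mu^2/2$. Taking expectation also over $\vx$, the expected total number of excess edges is $O(\int\kae^2)=O(1)$ for a bounded family, yielding $A-e(\gnk)=\Op(1)$ and thus $\opn$.

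Finally, to pass from bounded to general $\kf$ I would take $\kfM$ with $\ka_F^M=\ka_F\ett{|F|\le M,\,\ka_F\le M}$, so that $\xi(\kfM)\upto\xiek$ by monotone convergence. The natural coupling retaining only $\kfM$-atoms gives $e(\gnkm)\le e(\gnk)$, so $\liminf_{\ntoo}e(\gnk)/n\ge\xi(\kfM)$ in probability for every $M$; letting $M\to\infty$ yields the lower bound and, when $\xiek=\infty$, the full conclusion. When $\xiek<\infty$, extending the moment computation to all $F$ by monotone convergence gives $\E A=n\xiek+o(n)$; combined with $e(\gnk)\le A$ this provides the matching upper bound on $\E e(\gnk)/n$, and together with the in-probability lower bound it yields both $\E e(\gnk)/n\to\xiek$ and $e(\gnk)/n\pto\xiek$. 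The main obstacle is the uniform control across the infinite family $\F$: second-moment concentration is clean only for bounded kernels, so the monotone sandwich between $\gnkm$ and $\gnk$ is essential to extend the conclusion to general $\kf$.
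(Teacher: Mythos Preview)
Your proof is correct and follows essentially the same strategy as the paper: reduce to bounded kernel families via truncation, establish concentration there by a second-moment argument, and then pass to the general case by a monotone sandwich using $e(\gnkm)\le e(\gnk)\le A$ together with $\E A/n\le\xiek$. The paper packages this as the special case $F=K_2$ of its general subgraph-counting results (\refT{th_ssbd} and \refT{th_reg}), with the variance bound obtained by counting pairs of edges rather than by your conditional-variance plus bounded-differences route, but the substance is the same; indeed the paper remarks that a direct proof along your lines is straightforward.
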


As in~\cite{BJR}, our main focus will be the emergence of the giant component.
By the {\em component structure} of a graph $G$, we mean the set of vertex
sets of its components, i.e., the structure encoding only which vertices
are in the same component, not the internal structure of the components themselves.
When studying the component structure of $G(n,\kf)$, the model can be
simplified somewhat. Recalling that the atoms $F\in \F$ are connected
by definition,  when we add an atom $F$ to a graph $G$,
the effect on the component structure is simply to unite all components of $G$
that meet the vertex set of $F$, so only the vertex set of $F$ matters,
not its graph structure.
We say that $\kf$ is a {\em clique kernel family}
if the only
non-zero kernels are those corresponding to complete graphs;
the corresponding random graph model $G(n,\kf)$ is
a {\em clique model}. For questions
concerning component structure, it suffices to study clique models.
For clique kernels we write $\ka_\kx$ for $\ka_{K_\kx}$; as above,
we always assume that $\ka_\kx$ is symmetric, here meaning invariant under
all permutations of the coordinates of $\sss^\kx$.
Given a general kernel family $\kf$, the corresponding (symmetrized)
clique kernel family
is given by $\bkf=(\ka_\kx)_{\kx\ge2}$ with 
\begin{equation}\label{hk}
 \ka_\kx(x_1,\ldots,x_\kx) =
  \sum_{F\in \F: |F|=\kx} 
  \frac{1}{\kx!}
 \sum_{\pi\in \Symmr}
   \ka_F(x_{\pi(1)},\ldots,x_{\pi(\kx)}),
\end{equation}
where $\Symmr$ denotes the symmetric group of
permutations of $[r]$.
(This is consistent with our notation $\ka_2=\ka_{K_2}$.) 
In the Poisson version, with or without merging of parallel edges, 
the probability of adding some connected graph $F$ on a given
set of $r$ vertices is exactly the same in $G(n,\kf)$ and $G(n,\bkf)$,
so there is a natural coupling of these random graphs in which
they have exactly the same component structure.
In the non-Poisson version, the probabilities
are not quite the same, but close enough for our results
to transfer from one to the other.
Thus, when considering the size (meaning number of vertices) of the giant component in $G(n,\kf)$,
we may always replace $\kf$ by the corresponding clique kernel family.

It is often convenient to think of a clique model as a random hypergraph,
with the cliques as the hyperedges; for this reason we call a clique
kernel family 
a {\em hyperkernel}. Note that each unordered set of $\kx$ vertices corresponds
to $\kx!$ $\kx$-tuples, so the probability that we add a $K_\kx$ on a given set
of $r$ vertices is $\kx!\ka_r(x_{v_1},\ldots,x_{v_\kx})/n^{\kx-1}$. (More precisely,
this is the expected number of $K_r$s added with this vertex set.)

\subsection{A branching process}
Associated to each hyperkernel $\kf=(\ka_\kx)_{\kx\ge 2}$, there is a branching
process $\bpk$ with type space $\sss$, defined as follows.
We start with generation $0$ consisting of
a single particle whose type is chosen randomly from $\sss$
according to the distribution $\mu$. A particle $P$
of type $x$ gives rise to children in the next
generation according to a two-step process: first,
for each $\kx\ge 2$, construct a Poisson process $Z_\kx$ on $\sss^{\kx-1}$
with intensity 
\begin{equation}\label{bpint}
 \kx\ka_\kx(x,x_2,\ldots,x_\kx)\dd\mu(x_2)\cdots\dd\mu(x_\kx).
\end{equation}
We call the points of $Z=\bigcup_{\kx\ge 2} Z_\kx$ the {\em child cliques} 
of $P$. 
There are $\kx-1$ children of $P$ for each child clique
$(x_2,\ldots,x_\kx)\in \sss^{\kx-1}$, 
one each of types $x_2,\ldots,x_\kx$.
Thus the types of the children of $P$ form a multiset on $\sss$, with a certain
compound Poisson 
distribution we have just described. As usual, the children of different
particles are independent of each other, and of the history.

Considering the relationship to the graph $G(n,\kf)$, 
the initial factor $r$ in \eqref{bpint} arises because a particular vertex $v$
may be any one of the $r$ vertices in an $r$-tuple $(v_1,\ldots,v_r)$
on which we add a $K_r$.

We also consider the branching processes $\bpk(x)$, $x\in\sss$, defined exactly as $\bpk$,
except that we start with a single particle of the given type $x$.

\subsection{Two integral operators}
We shall consider two integral operators naturally
associated to $\bpk$.
Given any (measurable) $f:\sss\to [0,1]$,
define $\Sk(f)$ by
\begin{multline}\label{Sk}
 \Sk(f)(x) 
\\
= \sum_{\kx=2}^\infty \int_{\sss^{\kx-1}} \kx\ka_\kx(x,x_2,x_3,\ldots,x_\kx) 
 \left(1-\prod_{i=2}^{\kx} (1-f(x_i))\right) \dd\mu(x_2)\cdots\dd\mu(x_\kx),  
\end{multline}
and let
\[
 \Pk(f)(x) = 1-e^{-\Sk(f)(x)}.
\]
(The factors $\kx$ in \eqref{Sk} and in the definition of $\bpk$ are
unfortunate consequences of our choice of normalization.)

Let $P$ be a particle of $\bpk$ in generation $t$ with type $x$,
and suppose that each particle in generation $t+1$ of type $y$
has some property $\cQ$ with probability $f(y)$, independently of the
other particles. 
Given a child clique $(x_2,\ldots,x_\kx)$ of $P$,
the bracket in the definition of $\Sk$ expresses the probability that one
or more of the $\kx-1$ corresponding child particles has property $\cQ$.
Hence $\Sk(f)(x)$ is the expected number of child cliques containing
a particle with property $\cQ$, and, from the Poisson distribution of the
child cliques, $\Pk(f)(x)$ is the probability that there is at 
least one such clique, i.e., the probability that at least one child of $P$
has property $\cQ$.

Let $\rho(\kf)$ denote the survival probability of the branching
process $\bpk$, 
and $\rho_\kf(x)$ the survival probability of $\bpk(x)$. 
Assuming for the moment that the function $\rho_\kf:\sss\to[0,1]$ is measurable,
from the comments above and the independence
built into the definition of $\bpk$,
we see that the function $\rho_\kf$ satisfies
\[
 \rho_\kf = \Pk(\rho_\kf).
\]
Using simple standard arguments as in~\cite{BJR}, for example,
it is easy to check that $\rho_\kf$ is given
by the maximum solution to this equation, i.e., the pointwise
supremum of all solutions $f:\sss\to [0,1]$ to
\begin{equation}\label{fS}
 f = 1-e^{-\Sk(f)};
\end{equation}
see Lemma~\ref{l_max} below. From the definitions of $\bpk$ and $\bpk(x)$,
it is immediate that
\[
 \rho(\kf) = \int_\sss \rho_\kf(x) \dd\mu(x).
\]

In our analysis we shall also consider the linear operator $\Tk$
defined by
\begin{equation}\label{Tkdef}
 \Tk(f)(x) = \int_\sss \kae(x,y)f(y)\dd\mu(y),
\end{equation}
where $\kae$ is defined by \eqref{kaedef}.
For a hyperkernel $\kf$ (which is the only type of kernel family for which
we define the branching process), we have
\begin{equation}\label{ce}
 \kae(x,y) = \sum_{\kx\ge 2}\kx(\kx-1)
  \int_{\sss^{\kx-2}}\ka_\kx(x,y,x_3,x_4,\ldots,x_\kx)
   \dd\mu(x_3)\cdots\dd\mu(x_\kx),
\end{equation}
from which it is  easy to check that $\Tk$ is the linearized form of
$\Sk$:
more precisely, $\Tk$ is obtained by replacing
${1-\prod_{i=2}^{\kx} (1-f(x_i))}$ by $\sum_{i=2}^{\kx}f(x_i)$ in
the definition \eqref{Sk} of $\Sk$.

Let us note two simple consequences of this fact.
For any sequence $(y_i)_i$ in $\oi$ we have
${1-\prod_{i} (1-y_i)}\le\sum_{i}y_i$, so
\begin{equation}\label{st}
0\le\Sk(f)\le\Tk(f)
\end{equation}
for any $f:\sss\to [0,1]$.
Also, ${1-\prod_{i} (1-y_i)}> 0$ if and only if $\sum_{i}y_i>0$.
Since the integral of a non-negative function is positive
if and only if the function is positive on a set of positive measure,
it follows that for any $f:\sss\to [0,1]$ we have
\begin{equation}\label{SzTz}
 \Sk(f)(x)>0 \iff \Tk(f)(x)>0.
\end{equation}

In the edge-only case, when only $\ka_2$ is non-zero, 
$\kae=2\ka_2$ and
$\Tk=\Sk$.
When translating results from~\cite{BJR}, it is sometimes $\Tk$ and sometimes
$\Sk$ that plays the role of the linear operator $T_{\ka}$ appearing there.

\subsection{Main results}
In most of our results we shall need to impose some sort of integrability
condition on our kernel family; the exact condition depends
on the context.

\begin{num_definition}\label{Dint}
  \begin{thmenumerate}
	\item\label{Dwint}
A kernel family $\kf=(\ka_F)_{F\in\F}$ is 
\emph{integrable} if
\begin{equation}\label{wint}
 \ii{\kf} = \sum_{F\in\F} |F|\int_{\sss^{|F|}} \ka_F <\infty.
\end{equation}
This means that the expected number of atoms containing a given
vertex is bounded.

	\item\label{Deint}
A kernel family 
$\kf=(\ka_F)_{F\in\F}$ is 
\emph{edge integrable} if
\begin{equation*}
 \sum_{F\in\F} e(F)\int_{\sss^{|F|}} \ka_F <\infty;
\end{equation*}
equivalently, $\xiek<\infty$ or
$\int_{\sss^2}\kae<\infty$.
This means that the expected number of edges in $\gnk$ is $O(n)$, 
see \refT{Tedges},
and
thus the expected degree of a given vertex is bounded. 
  \end{thmenumerate}
\end{num_definition}

Note that a hyperkernel $(\ka_r)$ is integrable if and only if
$ \sum_{r\ge2} r\int_{\sss^{r}} \ka_r <\infty$, and 
edge integrable if and only if
$ \sum_{r\ge2} r^2\int_{\sss^{r}} \ka_r <\infty$.

Since we only consider connected atoms $F$, it is clear that
\begin{equation*}
\text{edge integrable $\implies$  integrable}.
\end{equation*}

Our main result is that if $\kf$ is an integrable
kernel family satisfying a certain extra assumption, then the 
normalized size of the giant component in $G(n,\kf)$ is simply
$\rho(\kf)+\op(1)$. The extra assumption is essentially that the graph
does not split into two pieces. As in \cite{BJR}, we say
that a symmetric kernel $\kae:\sss^2\to [0,\infty)$
is \emph{reducible} if
\begin{equation*}
  \text{$\exists A\subset \sss$ with
$0<\mu(A)<1$ such that $\kae=0$ a.e.\ on $A\times(\sss\setminus
A)$};
\end{equation*}
otherwise $\kae$ is \emph{irreducible}. Thus $\kae$ is irreducible
if
\begin{equation*}
  \text{$A\subseteq \sss$ and $\kae=0$ a.e.\ on $A\times(\sss\setminus A)$
implies $\mu(A)=0$ or $\mu(\sss\setminus A)=0$}.
\end{equation*}
A kernel family $(\ka_F)_{F\in\F}$ or hyperkernel $(\ka_\kx)_{\kx\ge 2}$ is
{\em irreducible} if the corresponding edge kernel $\kae$ is
irreducible.
It is easy to check that a kernel family $(\ka_F)_{F\in\F}$ is
irreducible if and only if for every
$A\subset \sss$ with $0<\mu(A)<1$ there exists an $F\in\F$ such that,
with $r=\kF$, if $x_1,\dots,x_r$ are chosen independently at random in $\sss$
with distribution $\mu$, then there is a positive probability that 
$\set{x_i}\cap A\neq\emptyset$,
$\set{x_i}\cap(\sss\setminus A)\neq\emptyset$
and
$\ka_F(x_1,\dots,x_r)>0$.
Informally, $(\ka_F)_{F\in\F}$ is irreducible if, whenever we partition
the type space into two non-trivial parts, edges between vertices
with types in the two parts are possible.

Note that a kernel family $\kfp$ and the corresponding
hyperkernel $\kf$ do {\em not} have the same edge kernel:
replacing each atom by a clique in general adds edges,
so $\kaep\le \kae$
with strict inequality possible. If $\kaep$ is irreducible,
then so is $\kae$; using the characterization
of irreducibility above,
it is easy to check that the reverse implication also holds.

We are now ready to state our main result; we write $C_i$ for the number
of vertices in the $i$th largest component of a graph $G$.

\begin{theorem}\label{th1}
Let $\kfp=(\ka'_F)_{F\in \F}$ be an irreducible,
integrable kernel family,
and let $\kf=(\ka_\kx)_{\kx\ge2}$ be the corresponding hyperkernel, given by \eqref{hk}.
Then
\[
 C_1\bb{G(n,\kfp)} = \rho(\kf)n +\op(n),
\]
and $C_2\bb{G(n,\kfp)} =\op(n)$.
\end{theorem}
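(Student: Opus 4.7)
The plan follows the edge-only template of~\cite{BJR}: reduce to a bounded clique kernel, couple a neighbourhood exploration with the branching process $\bpk$ to control the number of vertices in large components, and use sprinkling together with irreducibility to collapse these into a single giant. Since the theorem concerns only the component structure and, as noted in the discussion preceding the theorem, $G(n,\kfp)$ can be coupled with the Poisson multi-graph version of $G(n,\kf)$ so that the component structures agree, I would replace $\kfp$ by the hyperkernel $\kf$ from the outset and work in the Poisson model. I would then truncate to a bounded hyperkernel $\kfM$ by setting $\ka_r\equiv 0$ for $r>M$ and capping the surviving $\ka_r$ at $M$, verify by monotone convergence that $\rho(\kfM)\upto\rho(\kf)$ as $M\to\infty$, and check that irreducibility of $\kae$ descends to $\kaeM$ for all large $M$. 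Integrability of $\kf$ ensures that the expected fraction of vertices lying in atoms of size $>M$ or with $\ka_r>M$ is at most some $\epsilon(M)\downto 0$, so the component structures of $G(n,\kfM)$ and $G(n,\kf)$ differ on at most $\epsilon(M)n$ vertices in expectation. Sandwiching and letting $M\to\infty$ reduces the theorem to the bounded case.

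For fixed bounded $\kfM$ I would explore the component of a uniformly random vertex $v$ generation by generation, first to constant depth $t$ and then to depth growing slowly with $n$. Because each clique on a given $r$-tuple containing $v$ appears as an independent Poisson event, the multiset of types of new vertices revealed at the first step is compound Poisson with intensity \eqref{bpint}, up to corrections of order $n^{-1}$ arising from vertex reuse. A standard coupling argument then gives that, for suitably slowly growing $\omega=\omega(n)\to\infty$, the probability that $v$'s component has size at least $\omega$ converges to $\rho(\kfM)$. A first-moment calculation yields the upper bound $\sum_i C_i\ett{C_i\ge\omega}\le(\rho(\kfM)+\op(1))n$, and a second-moment estimate (using that the exploration in the bounded Poisson model is a sum of weakly dependent bounded contributions) gives the matching lower bound. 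Thus the total size of \emph{all} `large' components is $\rho(\kfM)n+\op(n)$ whp.

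The remaining and most delicate step is \emph{uniqueness} of the giant, and this is the step I expect to be the main obstacle. I would use a sprinkling argument: split each $\ka_r$ as $(1-\delta)\ka_r+\delta\ka_r$, obtain the set $\mathcal L$ of vertices in components of size $\ge\omega$ of the first graph $G_1$, and argue that after adding the sprinkled atoms of $G_2$ all components of $G_1$ meeting $\mathcal L$ merge into one. The complication, relative to~\cite{BJR}, is that a sprinkled clique of size $r$ can touch several components at once, so one cannot just count bridging edges across a bipartition of $\mathcal L$; instead one must show that, for any partition of $\mathcal L$ into two parts each of $\Theta(n)$ size, the expected number of sprinkled cliques with at least one vertex in each part is $\Omega(n)$. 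Irreducibility of $\kae$ together with~\eqref{SzTz} guarantees that $\Sk(\ett{A})>0$ on a set of positive measure for every $A\subset\sss$ with $0<\mu(A)<1$; a compactness argument over the possible type-profiles of $\mathcal L$ then upgrades this pointwise positivity to a uniform $\Omega(n)$ lower bound, yielding the desired merger. Once uniqueness is established, the estimates of the second paragraph identify $C_1=\rho(\kfM)n+\op(n)$ and force $C_2=\op(n)$, and letting $M\to\infty$ concludes the proof.
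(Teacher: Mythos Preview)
Your overall architecture---reduce to the hyperkernel, couple locally with $\bpk$ to control $N_{\ge\omega}$, then sprinkle for uniqueness---matches the paper's, and your local-coupling paragraph is essentially the content of \refS{sec_loc} (Lemmas~\ref{nkbdd} and~\ref{nkint}) together with \refL{l_easypart}. The uniqueness argument, however, has a genuine gap.

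First, you misidentify the main difficulty. The issue is not that a sprinkled clique can touch several components; it is that the bulk graph $G_1$ and the sprinkled atoms $G_2$ are only \emph{conditionally} independent given the type sequence $\vx$, so to say anything about $G_2$ after revealing $G_1$ you must first condition on $\vx$. Once you do, the expected number of sprinkled cliques bridging a given bipartition of $\mathcal L$ is a function of the realized $\vx$ (essentially the empirical sums $\tauxij$ of \eqref{thdef}), and this function fluctuates with $\vx$. Your ``compactness argument over type-profiles'' does not address this; there is no compact set in sight, and irreducibility of $\kae$ gives only qualitative positivity at the level of the limiting kernel, not a uniform lower bound valid for every realization of $\vx$ and every bipartition. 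Second, even granting an $\Omega(n)$ \emph{expected} number of bridges, that is not enough: you need to beat a union bound over roughly $2^{n/\omega}=e^{o(n)}$ bipartitions of $\mathcal L$, which requires the probability of zero bridges to be $e^{-\Theta(n)}$. You have not argued any concentration.

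The paper handles both points by a different mechanism. After conditioning on $\vx$, it replaces each sprinkled clique by a single uniformly chosen edge (Definition~\ref{Gnkdef}), obtaining a graph with \emph{independent} edges whose edge-probability matrix $\taux$ is shown (\refL{LA} plus the sampling result from~\cite{BCLSV:1}) to converge in cut metric to the bounded irreducible kernel $\Btau=\min\{\tau,1\}$. It then invokes \refL{l_BBCR}, which supplies exactly the needed $e^{-cn}$ bound on the probability that two linear-sized vertex sets are not joined, uniformly over all such pairs. This exponential bound is what beats the $e^{o(n)}$ partitions. Your proposal would need an analogue of this machinery (or some other route to exponential concentration conditional on $\vx$) to close the gap. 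A minor secondary point: irreducibility of $\kae$ need not descend to the truncation $\kaeM$ for any finite $M$; the paper sidesteps this by truncating via $\min\{\tau,1\}$, which preserves the zero set of $\kae$ and hence irreducibility.
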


The reducible case reduces to the irreducible one; see \refR{R_red}. 

\begin{remark}
Unsurprisingly, part of the proof of \refT{th1} involves
showing that (in the hyperkernel case)
the branching process captures the `local structure'
of $G(n,\kf)$; see Section~\ref{sec_loc} and in particular
Lemma~\ref{nkint}. So \refT{th1} can be seen as saying
that {\em within this broad class of models} the
local structure determines the size of the giant component.
Of course, the restriction is important, as shown
by the fact that the global assumption of irreducibility
is necessary.
\end{remark}

Of course, for \refT{th1} to be useful we would like to know
something about the survival probability $\rho(\kf)$; as noted
earlier, $\rho(\kf)$ can be calculated from $\rho_\kf$,
which is in turn the largest solution to a certain functional equation~\eqref{fS}.
Of course, the main thing we would like to know is
when $\rho(\kf)$ is positive; as in~\cite{BJR}, it turns out
that the answer depends on the $L^2$-norm $\norm{\Tk}\le\infty$
of the operator $\Tk$ defined by \eqref{Tkdef}.
(Since this operator is symmetric, its $L^2$-norm
is the same as its spectral radius. In other contexts,
it may be better to work with the latter.)

\begin{theorem}\label{th2}
Let $\kf$ be an integrable hyperkernel.
Then $\rho(\kf)>0$ if and only if $\norm{\Tk}>1$.
Furthermore, if $\kf$ is irreducible and $\norm{\Tk}>1$,
then $\rho_\kf(x)$ is the unique non-zero solution
to the functional equation \eqref{fS}, and $\rho_\kf(x)>0$ 
holds for a.e.\ $x$.
\end{theorem}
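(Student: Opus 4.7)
The argument splits naturally into three claims: (I) $\rho(\kf)>0\Rightarrow\norm{\Tk}>1$, (II) $\norm{\Tk}>1\Rightarrow\rho(\kf)>0$, and (III) the uniqueness/positivity statement under irreducibility. I would follow the template of \cite{BJR}, adapted to deal with the nonlinear $\Sk$ in place of a single linear integral operator.

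For (I), note that $\rho_\kf\in L^2(\sss,\mu)$ since $0\le\rho_\kf\le 1$ and $\mu$ is a probability measure. Using the strict inequality $1-e^{-s}<s$ for $s>0$, together with \eqref{st} and self-adjointness of $\Tk$,
\[
 \tn{\rho_\kf}^2 = \int_\sss \rho_\kf\,\Pk(\rho_\kf)\dmu < \int_\sss \rho_\kf\,\Sk(\rho_\kf)\dmu \le (\Tk\rho_\kf,\rho_\kf) \le \norm{\Tk}\,\tn{\rho_\kf}^2,
\]
where the strict step follows from \eqref{SzTz} and $\int\rho_\kf\dmu>0$. Dividing gives $\norm{\Tk}>1$.

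For (II), I would first reduce to a bounded problem by truncation. Choose $\sss_M\uparrow\sss$ with $\mu(\sss_M)<\infty$ and set $\ka_r^M:=(\ka_r\wedge M)\mathbf{1}_{\sss_M^r}$. Then $\Tk^M$ is a Hilbert--Schmidt self-adjoint operator with nonnegative kernel, $\Tk^M\uparrow\Tk$ monotonically, and via $\norm{T}=\sup_{\tn{g}=1}(Tg,g)$ and monotone convergence, $\norm{\Tk^M}\uparrow\norm{\Tk}$. Fix $M$ with $\norm{\Tk^M}>1$. Approximating $\Tk^M$ by finite-dimensional compressions onto step functions from a measurable partition produces symmetric nonnegative matrices whose top eigenvalues tend up to $\norm{\Tk^M}$, so Perron--Frobenius yields a nonzero step function $g\in L^\infty(\sss,\mu)$ with $g\ge 0$ and $\Tk^M g\ge(1+\delta) g$ a.e.\ on $\{g>0\}$, for some $\delta>0$. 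Taking $f:=\eps g$ with $\eps>0$ small, the uniform Taylor bounds
\[
 \Sk^M(\eps g)\ge\eps\,\Tk^M g-C\eps^2,\qquad 1-e^{-s}\ge s-\tfrac12 s^2,
\]
valid because $\kf^M$ is bounded and of finite-measure support, give $\Pk^M(f)\ge f$ a.e. Since $\Pk\ge\Pk^M$ and $\Pk$ is monotone, the iterates $\Pk^n(f)$ are nondecreasing and bounded by $1$, so they converge to a fixed point $f^\infty=\Pk(f^\infty)$ with $f^\infty\ge f$, positive on a set of positive measure; by Lemma~\ref{l_max}, $\rho_\kf\ge f^\infty$, whence $\rho(\kf)>0$.

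For (III), irreducibility first upgrades any nontrivial solution $f=\Pk(f)$ to one that is positive a.e.: setting $B=\{f>0\}$, \eqref{SzTz} gives $B=\{\Tk f>0\}$, and if $0<\mu(B)<1$ then $\kae(x,y)f(y)=0$ for a.e.\ $x\in\sss\setminus B$, forcing $\kae=0$ a.e.\ on $(\sss\setminus B)\times B$, contradicting irreducibility. So $f>0$ a.e., and in particular $\rho_\kf>0$ a.e. Uniqueness then follows from a standard Krein--Rutman-type concavity argument: the key input is
\[
 1-\prod_i(1-ty_i)\ge t\bigl(1-\prod_i(1-y_i)\bigr),\qquad y_i\in[0,1],\ t\in[0,1],
\]
strict whenever two $y_i>0$ and $t\in(0,1)$, which combined with strict concavity of $1-e^{-s}$ yields $\Pk(th)\ge t\,\Pk(h)$ with strict inequality a.e.\ on $\{h>0\}$ when $t\in(0,1)$. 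Applied to $\lambda:=\operatorname{ess\,inf}(f/\rho_\kf)\in(0,1]$, if $\lambda<1$ then $f=\Pk(f)\ge\Pk(\lambda\rho_\kf)>\lambda\rho_\kf$ a.e., and a standard dominated-improvement step (using that $\rho_\kf$ is bounded and both functions positive a.e.) upgrades this strict inequality to $f\ge(\lambda+\eta)\rho_\kf$ a.e.\ for some $\eta>0$, contradicting the definition of $\lambda$. Hence $f=\rho_\kf$.

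The most delicate step is the extraction of the super-eigenfunction $g$ in (II) when $\kf^M$ is not assumed irreducible: the passage through finite-dimensional Perron--Frobenius must preserve the spectral gap $(1+\delta)$ in the limit, which is where the approximation arguments need to be handled carefully. The essential-infimum step closing out uniqueness in (III) is similarly subtle but routine once $\rho_\kf$ is known to be bounded and positive a.e.
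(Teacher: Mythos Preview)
Parts (I) and (II) are broadly in line with the paper: (I) is essentially \refL{l_sc}, and (II) follows the same truncate-then-find-subsolution pattern as Lemmas~\ref{l_fup} and~\ref{l_ef}. One technical point in (II): the Perron--Frobenius eigenvector $g$ of a compression $P_n\Tk^M P_n$ satisfies only $P_n\Tk^M g=\lambda_n g$, which does \emph{not} give $\Tk^M g\ge(1+\delta)g$ pointwise, so your subsolution inequality for $\Pk^M$ does not follow. The paper sidesteps this by noting that a bounded $\kaeM$ makes $T_{\kaeM}$ Hilbert--Schmidt and hence compact, so its norm is an eigenvalue with a genuine nonnegative eigenfunction (this is Lemma~5.15 of \cite{BJR}, quoted in \refL{l_ef}); you should do the same, or else approximate the kernel from below by step functions rather than compress.

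The real gap is in (III). Your positivity-a.e.\ argument is correct and matches \refL{l_red2}, and your concavity inequality $\Pk(t h)\ge t\,\Pk(h)$ is correct and does yield $f>\lambda\rho_\kf$ a.e.\ whenever $\lambda=\operatorname{ess\,inf}(f/\rho_\kf)\in(0,1)$. But the ``dominated-improvement step'' upgrading this to $f\ge(\lambda+\eta)\rho_\kf$ a.e.\ is not routine; it is exactly the place where such Krein--Rutman arguments require compactness or strong positivity. Concretely, writing $s(x)=\Sk(\rho_\kf)(x)$, your bound gives
\[
  \frac{\Pk(\lambda\rho_\kf)(x)}{\rho_\kf(x)}\ \ge\ \frac{1-e^{-\lambda s(x)}}{1-e^{-s(x)}},
\]
and the right side, while strictly greater than $\lambda$ for $s(x)>0$, tends to $\lambda$ as $s(x)\to 0$. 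Since nothing in the hypotheses prevents $\rho_\kf$ (and hence $s$) from being arbitrarily small on sets of positive measure, no uniform $\eta>0$ is available in general. There is also no a priori reason why $\lambda>0$, and if $\lambda=0$ your argument gives no improvement at all. The paper in fact remarks, immediately after \refT{unique}, that the concavity route (used in \cite{Rsmall}) requires an extra continuity hypothesis not assumed here.

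The paper's uniqueness proof uses a genuinely different idea: the symmetry-type inequality $\int f\,\Sk g\le\int g\,\Sk f$ for $0\le f\le g\le 1$ (\refL{l_fSg}), together with the reverse pointwise inequality $f\,\Sk g\ge g\,\Sk f$ obtained from the expansion $-\log(1-t)=\sum_{k\ge1}t^k/k$ applied at two fixed points $f\le g$. These two inequalities force $f=g$ a.e.\ directly, with no limiting or improvement step. This is the key ingredient your proposal is missing.
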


In general, $\norm{\Tk}$ may be rather hard to calculate; a non-trivial 
example where we can calculate the norm easily is given in \refSS{ss_egg}.
Let us give a trivial example here: suppose that each $\ka_r$ is constant,
say $\ka_r=c_r$. Then $\kae(x,y) = \sum_r r(r-1)c_r = 2\xie(\ka)$ for
all $x$ and $y$, so
\begin{equation}\label{unif}
 \norm{\Tk}=2\xie(\ka).
\end{equation}
This is perhaps surprising:
it tells us that for such uniform hyperkernels, the critical point
where a giant component emerges
is determined only by the total number of edges added; it does not
matter what size cliques they lie in, even though,
for example, the third edge in every triangle is `wasted'. This is not true for arbitrary
kernel families: we must first replace each atom by a clique.

Note that for any hyperkernel,
\[
 \norm{\Tk}\ge \langle 1,\Tk 1\rangle = \int \kae = 2\xie(\ka),
\]
with equality if and only if $1$ is an eigenfunction, i.e., if the
asymptotic expected degrees $\la(x)=\int_\sss \kae(x,y)\dd\mu(y)$
are the same (ignoring sets of measure 0); 
c.f.\ \cite[Proposition 3.4]{BJR}.

\subsection{Relationship to the results in~\cite{BJR}}

In the edge-only case, the present
results are almost (see below) special
cases of those~\cite{BJR}. The set-up here is much simpler, as we choose to
insist that the vertex types $x_1,\ldots,x_n$ are i.i.d. This avoids many of
the complications arising in~\cite{BJR}.
In one way, the present set-up is, even in the edge-only
case, more general than that considered in~\cite{BJR}:
with the types \iid, there is no need to restrict the kernels
other than to assume integrability
(in~\cite{BJR} we needed them continuous a.e.),
and one does not need to impose the `graphicality' assumption
of~\cite{BJR}. Thus the edge-only case here actually complements
the results in~\cite{BJR}. We could form a common generalization,
but we shall not do this in detail; we believe that it is just a question
of combining the various technicalities here and in~\cite{BJR}, and that
no interesting new difficulties arise. Of course, these technicalities
are rather beside the point of the present paper; our interest is the
extension from kernels to hyperkernels. This turns out not to be
as straightforward as one might perhaps expect.
The problem is that the correlation between edges forces
us to deal with a non-linear operator, namely $\Sk$.
\medskip

The rest of the paper is organized as follows. In the next section we prove the results
about the non-Poisson branching process $\bpk$ that we shall need later,
the most important of which is \refT{th2}.
In Section~\ref{sec_loc} we consider the local coupling between the graph
and the branching process, showing in particular that the `right' number
of vertices are in components of any fixed size. In Section~\ref{sec_giant}
we complete the proof of Theorem~\ref{th1},
showing that whp there is at most one `large' component, which is then
a `giant' component of the right size. We briefly
discuss percolation on the graphs $G(n,\kf)$ in \refS{Sdisconnected}. 
In Sections~\ref{sec_degrees}
and~\ref{sec_subgraphs} we consider simpler properties of $G(n,\kf)$, namely
the asymptotic degree distribution and the number of subgraphs
isomorphic to a given graph.
Our results in~\refS{sec_subgraphs} include \refT{Tedges} as a simple special
case.
In Section~\ref{sec_pl} we illustrate
the flexibility of the model by carrying out explicit calculations for a special
case, giving graphs with power-law degree sequences with a range of exponents
and a range of clustering and mixing coefficients; see \refS{sec_pl}
for the definitions of these coefficients.  
Finally, in \refS{sec_lim} we discuss connections between our model
and various notions of graph limit, and state two open questions.

\section{Analysis of the branching process}

In this section, which is the heart of the paper, 
we forget about graphs, and study the (compound Poisson)
branching process $\bpk$.
One might expect the arguments of~\cite{BJR} to carry over {\em mutatis mutandis}
to the present context, but in the branching process analysis
this is very far from the truth; this applies especially to
the proof of \refT{unique} below.

Throughout the section we work with an
integrable hyperkernel
$\kf=(\ka_\kx)_{\kx\ge 2}$, i.e., we assume
that $\ii{\kf} = \sum_r r\int\ka_r<\infty$.
Our main aim
in this section is to prove \refT{th2}.

For $x\in \sss$ let
\[
 \la(x) = (\Sk(1))(x) 
= \sum_{\kx=2}^\infty \int_{\sss^{\kx-1}} \kx\ka_\kx(x,x_2,x_3,\ldots,x_\kx) 
 \dd\mu(x_2)\cdots\dd\mu(x_\kx),
\]
so $\la(x)$ is the expected number of child cliques of a particle of type
$x$.
We have
\[
 \int_\sss \la(x) \dd\mu(x) 
= \sum_{\kx\ge 2} \int_{\sss^\kx} \kx \ka_\kx = \ii{\kf},
\]
which is finite by our integrability assumption \eqref{wint}.
It follows that $\la(x)<\infty$ holds almost everywhere.
Changing each kernel $\ka_\kx$ on a set of measure zero, we may assume that
$\la(x)$ is finite for all $x$. (Such a change is irrelevant for the branching
process and for the graph.) From now on, we thus assume that
$\la(x)<\infty$ holds for all $x$, for any hyperkernel $\kf$ we consider.

Since a Poisson random variable with finite mean is always finite, any 
particle in $\bpk$ has a finite number of child cliques, and hence
a finite number of children, even though the expected number of children
may perhaps be infinite. Hence, the event that the branching process
dies out (i.e., that some generation is empty) coincides with the event that 
it is finite.

Using this fact, we have the following, standard result.
Recall that $\rho_\kf(x)$ denotes the survival probability of
the branching process $\bpk(x)$ that starts with a single particle
of type $x$, and $\rho_\kf$ denotes the function $x\mapsto \rho_\kf(x)$ .
\begin{lemma}\label{l_max}
The function $\rho_\kf$ satisfies
the functional equation \eqref{fS}. Furthermore, if $f:\sss\to[0,1]$ is any other
solution to \eqref{fS}, then $0\le f(x)\le \rho_\kf(x)<1$ holds for every $x$.
\end{lemma}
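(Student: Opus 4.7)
The plan is to use the standard extinction-probability iteration, applied to the compound Poisson branching process $\bpk$. I define $\zeta_n:\sss\to\oi$ by letting $\zeta_n(x)$ denote the probability that $\bpk(x)$ survives for at least $n$ generations, so $\zeta_0\equiv 1$ and $\zeta_n(x)\downto\rho_\kf(x)$ for every $x$ as $n\to\infty$. By conditioning on the first generation and using the Poisson structure of the child cliques (together with the independence of the subtrees rooted at distinct children), one obtains the recursion
\begin{equation*}
\zeta_{n+1}(x)=\Pk(\zeta_n)(x)=1-\exp\bigpar{-\Sk(\zeta_n)(x)}.
\end{equation*}
Indeed, for a clique child of size $\kx$ with types $(x_2,\dots,x_\kx)$, the probability that none of the $\kx-1$ corresponding children begins a line of descent reaching generation $n$ is $\prod_{i=2}^\kx(1-\zeta_n(x_i))$, so the expected number of child cliques containing at least one such particle is $\Sk(\zeta_n)(x)$, and the Poisson distribution of cliques yields the displayed formula. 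Measurability of each $\zeta_n$, and hence of $\rho_\kf$, follows by induction using Fubini/Tonelli in the definition \eqref{Sk} of $\Sk$.

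To pass to the limit, I would use dominated convergence inside $\Sk$: the integrand in \eqref{Sk} is bounded by $\kx\ka_\kx(x,x_2,\dots,x_\kx)$ whose total integral is $\la(x)=\Sk(1)(x)$, which we have arranged to be finite for every $x\in\sss$. Since $\zeta_n\downto\rho_\kf$, monotone/dominated convergence gives $\Sk(\zeta_n)(x)\to\Sk(\rho_\kf)(x)$ pointwise, hence $\Pk(\zeta_n)\to\Pk(\rho_\kf)$, and so $\rho_\kf=\Pk(\rho_\kf)$, establishing \eqref{fS}.

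For the maximality statement, let $f:\sss\to\oi$ be any solution of \eqref{fS}. The operator $\Pk$ is monotone (since increasing $f$ increases $1-\prod_i(1-f(x_i))$ pointwise, hence increases $\Sk(f)$ and $\Pk(f)$). Therefore $f\le 1=\zeta_0$ together with $f=\Pk(f)$ yields $f=\Pk(f)\le\Pk(\zeta_0)=\zeta_1$, and by induction $f\le\zeta_n$ for every $n$; taking $n\to\infty$ gives $f\le\rho_\kf$. Finally, $\rho_\kf(x)<1$ because a particle of type $x$ produces no child cliques (and thus no children at all) with probability $e^{-\la(x)}>0$, since $\la(x)<\infty$; hence $\rho_\kf(x)\le 1-e^{-\la(x)}<1$.

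The only mildly delicate point is the passage to the limit in the $\Sk$ recursion, since one must verify that the non-linear integral operator commutes with the monotone pointwise limit $\zeta_n\downto\rho_\kf$; this is where the arrangement that $\la(x)<\infty$ everywhere (rather than only a.e.) is used, so that dominated convergence applies at every $x$ without exceptional sets. Everything else is routine monotone iteration.
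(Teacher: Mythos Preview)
Your argument is correct and follows essentially the same route as the paper's proof: iterate $\Pk$ starting from $\zeta_0\equiv1$, obtain the recursion $\zeta_{n+1}=\Pk(\zeta_n)$, pass to the limit using monotonicity (and $\la(x)<\infty$), and deduce maximality from the monotonicity of $\Pk$. You are simply more explicit than the paper about the dominated-convergence step in passing to the limit inside $\Sk$.
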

\begin{proof}
Let $\rho_t(x)$ be the probability that $\bpk(x)$ survives for at least
$t$ generations, so $\rho_0$ is identically $1$.
Conditioned on the set of child cliques, and hence children, of the root,
each child of type $y$ survives for $t$ further generations
with probability $\rho_t(y)$. These events are independent for different
children by the definition of the branching process, so
$\rho_{t+1}=\Pk(\rho_t)$. The result follows from the monotonicity
of $\Phik$ and the fact that $\rho_t(x)\downto \rho_\kf(x)$,
noting that $\Phik(1)(x)=1-e^{-\lambda(x)}<1$ for the strict inequality.
\end{proof}

Let us remark for the last time on the measurability of the functions
we consider: in the proof above, $\rho_0$ is measurable by definition.
From the definition of $\Phik$ and the measurability of each $\ka_k$,
it follows by induction that each $\rho_t$ is measurable,
and hence that $\rho_\kf$ is. Similar arguments apply in many places later,
but we shall omit them.

We next turn to the uniqueness of the non-zero solution (if any)
to \eqref{fS}. The key ingredient in establishing this
is the following simple inequality concerning the non-linear operator $\Sk$.

\begin{lemma}\label{l_fSg}
Let $\kf$ be an integrable hyperkernel,
and let $f$ and $g$ be measurable functions on $\sss$ with $0\le f\le g\le 1$.
Then
\[
 \int_\sss f \Sk g \le \int_\sss g \Sk f.
\]
\end{lemma}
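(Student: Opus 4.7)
The plan is to unfold both integrals using \eqref{Sk}, exploit the full symmetry of each $\ka_\kx$ to reduce the problem to a pointwise inequality, and then establish that inequality by a simple probabilistic coupling.

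Substituting \eqref{Sk}, I would first write $\int f\,\Sk g-\int g\,\Sk f$ as a sum over $\kx\ge 2$ of integrals $\int_{\sss^\kx}\ka_\kx(x_1,\dots,x_\kx)\,\Phi_\kx\,\dd\mu(x_1)\cdots\dd\mu(x_\kx)$. Because $\ka_\kx$ is invariant under all permutations of its coordinates, the factor $\kx$ appearing in \eqref{Sk} can be absorbed by summing the integrand over which coordinate plays the role of the ``root'', giving
\[
\Phi_\kx=\sum_{j=1}^\kx f_j\Bigl(1-\prod_{i\ne j}(1-g_i)\Bigr)-\sum_{j=1}^\kx g_j\Bigl(1-\prod_{i\ne j}(1-f_i)\Bigr),
\]
where $f_i:=f(x_i)$ and $g_i:=g(x_i)$. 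Since the nonnegative symmetric kernels $\ka_\kx$ are otherwise arbitrary, it will suffice to prove the pointwise inequality $\Phi_\kx\le 0$ for $0\le f_i\le g_i\le 1$.

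For this I would use a probabilistic coupling. Let $U_1,\dots,U_\kx$ be \iid{} Uniform on $\oi$, and set $A_i:=\ett{U_i\le f_i}$ and $B_i:=\ett{U_i\le g_i}$, so that $A_i\le B_i$ always. Using independence of $A_j$ from $(B_i)_{i\ne j}$, I would recognise
\[
\sum_j f_j\Bigl(1-\prod_{i\ne j}(1-g_i)\Bigr)=\E|S_1|,\qquad \sum_j g_j\Bigl(1-\prod_{i\ne j}(1-f_i)\Bigr)=\E|S_2|,
\]
where $S_1:=\set{j:A_j=1\text{ and }B_i=1\text{ for some }i\ne j}$ and $S_2:=\set{j:B_j=1\text{ and }A_i=1\text{ for some }i\ne j}$. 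The pointwise claim then reduces to the deterministic statement $|S_1|\le |S_2|$, which I would prove by splitting on $a:=|\set{i:A_i=1}|$. If $a=0$ or $a\ge 2$ one gets $S_1\subseteq S_2$ directly, using $A_i\le B_i$ and (when $a\ge 2$) the freedom to pick a different $A$-active witness. If $a=1$ with unique active index $j_0$, then $S_1\subseteq\set{j_0}$ is nonempty iff some $B_i=1$ with $i\ne j_0$, while $S_2=\set{i\ne j_0:B_i=1}$, so $|S_1|\le|S_2|$ in this case too.

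The main obstacle I anticipate is that direct algebraic attacks on $\Phi_\kx$ lead nowhere: expanding $1-\prod_{i\ne j}(1-g_i)$ by inclusion--exclusion produces alternating sums of integrals of the form $\int\ka_\kx\prod_{i\in T}f_i\prod_{i\in T'}g_i$ whose sign cannot be controlled term-by-term. The coupling sidesteps this by translating both sides into expected cardinalities of random sets and reducing the whole problem to a short combinatorial comparison.
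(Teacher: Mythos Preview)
Your proof is correct and follows essentially the same route as the paper: both reduce to the same pointwise inequality (the paper writes it as a sum over $\Symmr$, which is $(\kx-1)!$ times your sum over the root index $j$), then reduce to $\{0,1\}$-valued $f_i\le g_i$ --- the paper via multilinearity in each pair $(f_i,g_i)$, you via the coupling $A_i=\ett{U_i\le f_i}$, $B_i=\ett{U_i\le g_i}$, which is exactly the probabilistic expression of that multilinearity --- and finish with a case analysis on the number of indices where $f$ (respectively $A$) equals $1$. Your $|S_1|\le|S_2|$ formulation packages the final case analysis a bit more cleanly than the paper's contradiction argument, but the content is the same.
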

\begin{proof}
We may write $\Sk$ as $\sum_{r\ge 2} S_r$, where $S_r$
is the non-linear operator corresponding to the single
kernel $\ka_\kx$, so $S_r(f)$ is defined by the summand in \eqref{Sk}.
It suffices to prove that
\begin{equation}\label{fSkg}
 \int_\sss f S_\kx g \le \int_\sss g S_\kx f.
\end{equation}
We shall in fact show that for any (distinct) $x_1,\ldots,x_\kx\in \sss$
we have
\begin{equation}\label{pointwise}
 \sum_{\pi\in \Symmr} 
   f(x_{\pi(1)})\left(1- \prod_{i=2}^\kx\bb{1-g(x_{\pi(i)})}\right)
\le
 \sum_{\pi\in \Symmr} 
   g(x_{\pi(1)})\left(1- \prod_{i=2}^\kx\bb{1-f(x_{\pi(i)})}\right)
\end{equation}
Since $\ka_\kx$ is symmetric, \eqref{fSkg} follows. (In fact, 
\eqref{fSkg} can be true in general only if \eqref{pointwise} always holds,
considering the symmetrization of a delta function.)
Now \eqref{pointwise} can be viewed as an inequality
in $2\kx$ variables $f(x_1),\ldots,f(x_\kx),g(x_1),\ldots,g(x_\kx)$.
This inequality is linear in each variable. Furthermore,
it is linear in each pair $(f(x_i)$, $g(x_i))$. 
In proving \eqref{pointwise} for any $0\le f\le g\le 1$,
we may thus assume that for each $i$ one of three possibilities holds:
$0=f(x_i)=g(x_i)$, $f(x_i)=g(x_i)=1$, or $f(x_i)=0$ and $g(x_i)=1$.
In other words, we may assume that $f$ and $g$ are $\{0,1\}$-valued.

Suppose then for a contradiction
that \eqref{pointwise} fails for some $\{0,1\}$-valued $f$ and $g$ with $f\le g$.
Then there must be some permutation $\pi$ such that
\begin{equation}\label{pw2}
 f(x_{\pi(1)})\left(1- \prod_{i=2}^\kx\bb{1-g(x_{\pi(i)})}\right)
>
 g(x_{\pi(1)})\left(1- \prod_{i=2}^\kx\bb{1-f(x_{\pi(i)})}\right),
\end{equation}
which we may take without loss of generality to be the identity permutation.
Since both sides of \eqref{pw2} are $\{0,1\}$-valued, the left must be
$1$ and the right $0$. Since the left is $1$, we have $f(x_1)=1$,
so, using $f\le g$, $g(x_1)=1$. But now for the right hand side
of \eqref{pw2} to be 0 the final product in \eqref{pw2} must be $1$,
so $f(x_i)=0$ for $i=2,\ldots,\kx$, i.e., $f$ takes the value $1$ only once.
Of course, $g$ must take the value $1$ at least twice, otherwise
we have equality.
But now the left hand side of \eqref{pointwise} is exactly $(\kx-1)!$,
coming from terms with $\pi(1)=1$ and hence $f(x_{\pi(1)})=1$.
The right hand side is at least $(\kx-1)!$, from any $\pi$
mapping $1$ to some $j\ne 1$ with $g(x_j)=1$.
Hence \eqref{pointwise} holds after all, giving
a contradiction and completing the proof.
\end{proof}

If $\kf$ is reducible, then \eqref{fS} may in general
have several non-zero solutions. To prove uniqueness
in the irreducible case we need to know what
irreducibility tells us about $\Sk$. 

\begin{lemma}\label{l_red2}
If there exists a measurable $f:\sss\to [0,1]$ with 
$0<\mu\set{f>0}<1$ and $\set{\Sk f>0}\subseteq \set{f>0}$,
then $\kf$ is reducible.
\end{lemma}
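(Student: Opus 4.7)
The plan is to show that $A\coloneqq\{f>0\}$ witnesses the reducibility of $\kae$. By hypothesis $0<\mu(A)<1$ and $\Sk f=0$ on $\sss\setminus A$ (except possibly on a null set). First I would unpack the vanishing of $\Sk f$: since every summand in \eqref{Sk} is non-negative, for a.e.\ $x\in\sss\setminus A$ and every $r\ge 2$,
\[
\ka_r(x,x_2,\ldots,x_r)\Bigl(1-\prod_{i=2}^{r}(1-f(x_i))\Bigr)=0
\]
for a.e.\ $(x_2,\ldots,x_r)\in\sss^{r-1}$. The bracket is $>0$ precisely when at least one $x_i$ lies in $A$, so this says that for a.e.\ $x\notin A$, for every $r$, $\ka_r(x,x_2,\ldots,x_r)=0$ for a.e.\ $(x_2,\ldots,x_r)$ having some coordinate in $A$.

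Next I would translate this into a statement about $\kae$ via \eqref{ce}. Fix $r\ge 2$; Fubini applied to the null set
\[
N_{x,r}=\bigset{(x_2,\ldots,x_r)\in\sss^{r-1}:\exists\,i\text{ with }x_i\in A\text{ and }\ka_r(x,x_2,\ldots,x_r)>0}
\]
yields, for a.e.\ $x\notin A$ and a.e.\ $y\in A$, that $\ka_r(x,y,x_3,\ldots,x_r)=0$ for a.e.\ $(x_3,\ldots,x_r)\in\sss^{r-2}$ (taking $i=2$ in the definition of $N_{x,r}$). Hence the $r$-th integral in \eqref{ce} vanishes. Taking a countable intersection of full-measure sets over $r$ gives $\kae(x,y)=0$ for a.e.\ $(x,y)\in(\sss\setminus A)\times A$; by symmetry of each $\ka_r$ the edge kernel $\kae$ is itself symmetric, so $\kae=0$ a.e.\ on $A\times(\sss\setminus A)$ as well. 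Since $0<\mu(A)<1$, this is exactly the definition of reducibility.

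The only real work is bookkeeping with the measure-zero exceptional sets and swapping the order of integration at the Fubini step; everything else is a direct rewriting of the hypothesis. There is no need to invoke the non-linear structure of $\Sk$ beyond the elementary fact that the bracket $1-\prod_{i=2}^{r}(1-f(x_i))$ is positive iff some $f(x_i)>0$, i.e.\ some $x_i\in A$, which is the same observation that underlies \eqref{SzTz}.
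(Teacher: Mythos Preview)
Your proof is correct and follows essentially the same approach as the paper: the key observation is that the bracket $1-\prod_i(1-f(x_i))$ is positive iff some $x_i\in A$, which is exactly \eqref{SzTz}. The paper's version is more streamlined, using \eqref{SzTz} to pass directly from $\Sk f=0$ on $A^\cc$ to $\Tk f=0$ on $A^\cc$, whence $\kae=0$ a.e.\ on $A^\cc\times A$ follows immediately from the definition \eqref{Tkdef} of $\Tk$; you instead unpack this through the individual $\ka_r$ and \eqref{ce}, which amounts to reproving \eqref{SzTz} by hand.
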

\begin{proof}
Let $A=\set{f>0}$, so by assumption $\Sk f=0$ on $A^\cc=\sss\setminus A$.
From \eqref{SzTz} we have $\set{\Tk f=0}=\set{\Sk f=0}$,
so $\Tk f=0$ on $A^\cc$. From the definition of $\Tk$ it follows
that $\kae=0$ a.e.\ on $A^\cc\times A$, so $\kae$ is reducible.
But this is what it means for $\kf$ to be reducible.
\end{proof}
In fact, taking $f$ to be a suitable indicator function,
one can check that the converse of Lemma~\ref{l_red2} also holds.

Using Lemmas~\ref{l_fSg} and~\ref{l_red2}
it is easy to deduce uniqueness of any non-zero
solution to~\eqref{fS}. 

\begin{theorem}\label{unique}
Let $\kf$ be an irreducible, integrable hyperkernel,
and let $f$ and $g$ be solutions to \eqref{fS}
with $0\le f(x)\le g(x)\le1$ for every $x$.
Then either $f=0$ or $f=g$.
In particular, the only solutions to \eqref{fS} are $\rho_\kf$
and the zero function, which may or may not coincide.
\end{theorem}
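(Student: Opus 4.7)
The plan is to combine Lemma~\ref{l_fSg} with the functional equation~\eqref{fS} to pin $f$ down pointwise in terms of $g$, and then exploit irreducibility via Lemma~\ref{l_red2} to rule out the possibility that $f$ vanishes on a non-trivial proper subset of $\sss$. By Lemma~\ref{l_max} both $f$ and $g$ take values in $[0,1)$, so the functional equation can be inverted pointwise as $\Sk f=-\log(1-f)$ and $\Sk g=-\log(1-g)$.

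Substituting these expressions into the inequality of Lemma~\ref{l_fSg} yields
\begin{equation*}
\int_\sss g(x)\log\bigl(1-f(x)\bigr)\dd\mu(x)\le \int_\sss f(x)\log\bigl(1-g(x)\bigr)\dd\mu(x).
\end{equation*}
The Taylor series $\phi(t)=-\log(1-t)/t=1+t/2+t^2/3+\cdots$ shows that $\phi$ is strictly increasing on $[0,1)$, so the pointwise bound $f\le g<1$ gives $\phi(f)\le\phi(g)$, which rearranges to the opposite pointwise inequality $g\log(1-f)\ge f\log(1-g)$ (with the convention that both sides vanish when $f=0$). Combining the two forces equality a.e., i.e.\ $g\log(1-f)=f\log(1-g)$ a.e.\ on $\sss$. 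On the set $A=\set{f>0}$ this reads $\phi(f)=\phi(g)$, and strict monotonicity of $\phi$ gives $f=g$ a.e.\ on $A$.

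It remains to determine the size of $A$. If $\mu(A)=0$, then $f=0$ a.e., which forces $\Sk f\equiv 0$ and hence $f\equiv 0$ via the functional equation. Otherwise $\mu(A)>0$, and since $f\equiv 0$ on $A^\cc$ the relation $f=1-e^{-\Sk f}$ gives $\Sk f\equiv 0$ on $A^\cc$, so $\set{\Sk f>0}\subseteq A$; if $\mu(A)<1$ this would contradict the irreducibility of $\kf$ by Lemma~\ref{l_red2}. Hence $\mu(A)=1$, giving $f=g$ a.e., and applying $\Phik$ to both sides then upgrades the equality to every point via the functional equation, since $\Sk$ depends only on a.e.\ values of its argument. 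The final assertion follows from Lemma~\ref{l_max}: $\rho_\kf$ is itself a solution of~\eqref{fS} that dominates every other solution, so the result just proved, applied with $g=\rho_\kf$, shows that every solution is either identically zero or equal to $\rho_\kf$. The crux of the argument is the interplay between the global inequality of Lemma~\ref{l_fSg} and the opposite pointwise inequality coming from the shape of $-\log(1-t)$; once equality a.e.\ is extracted, the irreducibility step is the only remaining subtlety.
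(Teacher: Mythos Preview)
Your proof is correct and follows essentially the same approach as the paper: both combine the global inequality of Lemma~\ref{l_fSg} with the reverse pointwise inequality $f\Sk g\ge g\Sk f$ (coming from the strict monotonicity of $-\log(1-t)/t$) to force $f=g$ a.e., and both invoke Lemma~\ref{l_red2} to handle the set $\{f>0\}$. The only difference is in the order of operations---the paper first uses irreducibility to show $f>0$ a.e.\ and then runs the integral argument, whereas you run the integral argument first on $\{f>0\}$ and then use irreducibility to show this set has full measure; this is a cosmetic rearrangement, not a different method. One small point you leave implicit is that the integrals $\int f\Sk g$ and $\int g\Sk f$ are finite (needed so that the sandwich argument is meaningful); this follows since $\Sk g\le\Sk 1=\la$ and $\int\la=\ii\kf<\infty$ by integrability.
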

\begin{proof}
We may suppose that $f$ is not $0$ a.e.; otherwise, $f=\Pk(f)$
would be identically zero.
Since $f$ solves \eqref{fS}, we have $\set{f=0}=\set{\Sk f=0}$,
so by \refL{l_red2}
we cannot have $0<\mu\{f>0\}<1$. The only possibility
left is that $\mu\{f>0\}=1$, i.e., $f>0$ a.e.
Turning to $g$, since $\kf$ is integrable, we have
$\Sk(g)(x)\le\Sk(1)(x)=\la(x)<\infty$ for a.e. $x$,
and thus $g=\Pk(g)<1$ a.e.

Since $f$ and $g$ solve \eqref{fS}, we have $\Sk(f)(x)=-\log(1-f(x))$
and $\Sk(g)(x)=-\log(1-g(x))$.
Hence,
\begin{align*}
 f\Sk(g) = -f\log(1-g) &= f(g+g^2/2+g^3/3+\cdots) \\
 &\ge g(f+f^2/2+f^2/3+\cdots) = g\Sk(f)
\end{align*}
whenever $0\le f\le g\le1$, with strict inequality whenever $0<f<g$. 
Since $\kf$ is integrable, it is immediate 
from the definition \eqref{Sk} 
that $\Sk f$ and $\Sk g$ are integrable,
and it follows that
\[
 \int_\sss f \Sk g \ge \int_\sss g \Sk f,
\]
with strict inequality unless $f=g$ a.e.
Since \refL{l_fSg} gives the reverse inequality, we
have $f=g$ a.e., and thus $f=\Pk f=\Pk g=g$.
The second statement then follows from \refL{l_max}.
\end{proof}

\refT{unique} generalizes the corresponding result in~\cite{BJR}, namely
Lemma~5.9. Indeed, in the edge-only case (when only $\ka_2$ is non-zero),
the operators $\Sk$ and $\Tk$ coincide, and \refL{l_fSg} holds trivially,
using the symmetry of $\Tk$. This shows that, with hindsight, the proof
of Lemma~5.9 in~\cite{BJR} may be simplified considerably,
by considering $\int_\sss fTg$ instead of $\int_\sss fTh$, $h=(g-f)/2$.
This is significant, since the proof in~\cite{BJR} does not adapt
readily to the present context.

Although simple, the proof of \refT{unique} above is a little mysterious
from a branching process point of view.
It is tempting to think that the result is `obvious', and indeed
that a corresponding result should hold for any Galton--Watson process.
However, some conditions are certainly necessary, and it is not
clear what the right conditions are for a general process.
(Irreducibility is always needed, of course.)
In~\cite{Rsmall}, a corresponding result is proved for a general branching
process satisfying a certain continuity assumption; the proof
uses the convexity property $\Phi(\la f)\ge \la\Phi(f)$ for any function $0\le f\le 1$
and any $0\le \la\le 1$, which holds for all Galton--Watson branching processes.
In~\refT{unique}, continuity is not needed, but some kind of symmetry is;
there does not seem to be an obvious common generalization of these results.

Indeed, the next example shows that the situation is not that simple: in 
the compound Poisson case (as opposed to the simple
Poisson case), symmetry of the relevant linear operator is not enough.

\begin{example}
Let $\sss=\{1,2,3,\ldots\}$ with $\mu\{i\}=2^{-i}$ for each $i$,
and consider the branching process $\bp=\bp(x)$ with type space $(\sss,\mu)$ defined
as follows. Start with a single particle of some given type $x$. Each particle of type $i$
has a Poisson number of children of type $i+1$ with mean $2 = 2^{i+2}\mu\{i+1\}$;
we call these `forward children'.
Also, for $i\ge 2$, a particle of type $i$ has `backward children' of type $i-1$:
the number of these is $4^{i+1}$ times a Poisson with mean $4^{-i}$.
Note that the expected number of backward children is $4=2^{i+1}\mu\{i-1\}$.
Defining the `edge-kernel' $\kae$ so that the expected number of children
of type $j$ that each particle of type $i$ has is given by
$\kae(i,j)\mu\{j\}$, 
we have
$\kae(i,j)=2^{1+\max\{i,j\}}$ if $|i-j|=1$ and $\kae(i,j)=0$ otherwise,
so $\kae$ is symmetric and irreducible.

Define the non-linear operator $\Phi$ associated to $\bp$ in the natural way,
so $\Phi(f)(x)$ is the probability that at least one child of the root of type
$x$ has a certain property, if each child of type $y$ has this property
independently with probability $f(y)$. As before, the survival probability $\rho(x)$
satisfies $\rho=\Phi(\rho)$.

Let $\tau(x)$ denote the probability that
the process {\em survives transiently}, i.e., survives forever, but, for each $i$,
contains in total only finitely many particles of type $i$.
Consider the `forward process' given by ignoring backward children. This is simply
a Poisson Galton--Watson process with on average 2 offspring, and so survives
with some positive probability. Also, given that it survives, there is a positive
probability that for every $t$, generation $t$ contains at most $3^t$ particles, say.
But since the particles in generation $t$ have type $x+t$, the expected number of {\em sets}
of backwards children of all particles in the forward process is at most
$\sum_{t\ge 0} 3^t4^{-t-1}< \infty$, 
and with positive probability the particles in the forward process
have no backwards children. But in this case, the forward process is the whole
process, and the process survives transiently. Hence $\tau(x)>0$ for every $x$.

Let $\sigma(x)=\rho(x)-\tau(x)$ be the probability that the process
survives recurrently. Considering the children of the initial particle,
we see that $\sigma=\Phi(\sigma)$.
The process restricted to any two consecutive types is already
supercritical, and so has positive probability of surviving
by alternating between these types. Thus $\sigma(x)>0$ for all $x$.
We showed above that $\tau(x)=\rho(x)-\sigma(x)>0$ for all $x$,
so $0<\sigma(x)<\rho(x)$, and $f=\Phi(f)$ has (at least) two
non-zero solutions, namely $\sigma$ and $\rho$.
\end{example}

Let us turn to the analysis of the solution $\rho_\kf$ to \eqref{fS},
and in particular 
the question of when $\rho>0$, i.e., when the branching process
$\bp_\kf$ is supercritical. 
Throughout we consider an integrable hyperkernel $\kf$,
with corresponding edge kernel $\kae$.

Recall that we may assume that $\la(x)=\Sk(1)(x)$ is finite everywhere. Hence,
for any $f$ satisfying \eqref{fS}, we have $f(x)<1$ for all $x$.
On the other hand, we cannot assume that $\kae$ is integrable, or
indeed finite. 
For one natural example, consider the integrable hyperkernel with each
$\ka_\kx$ 
constant, and $\ka_\kx=1/\kx^3$. In this case $\kae(x,y)=\infty$
for all $x$ and $y$. 
If $\kae$ is infinite on a set of positive measure, then we take $\norm{\Tk}$
to be infinite. 

\begin{lemma}\label{l_sc}
If $\norm{\Tk}\le 1$, then $\rho(\kf)=0$.
\end{lemma}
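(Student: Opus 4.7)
The plan is to bootstrap the pointwise inequality $\rho_\kf \le \Tk(\rho_\kf)$, which follows from the functional equation, into an $L^2$ identity that forces $\rho_\kf$ to vanish. First observe that $\rho_\kf \in L^2(\sss,\mu)$ since $0\le\rho_\kf\le 1$ and $\mu$ is a probability measure, and that $\Tk$ is a bounded operator on $L^2$ because $\norm{\Tk}\le 1 < \infty$. Starting from $\rho_\kf = 1 - e^{-\Sk(\rho_\kf)}$ and the elementary bound $1-e^{-t}\le t$, I get $\rho_\kf\le \Sk(\rho_\kf)$; combining with \eqref{st} yields the key chain
\[
 \rho_\kf \le \Sk(\rho_\kf) \le \Tk(\rho_\kf).
\]

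Next I would multiply by $\rho_\kf$ and integrate. Since $\Sk(\rho_\kf)\le \Sk(1)=\la$ is integrable (as $\int\la = \ii{\kf}<\infty$) and $\rho_\kf\le 1$, all the integrals below are finite, and \CS{} gives
\[
 \|\rho_\kf\|_2^2 \;\le\; \int_\sss \rho_\kf \Sk(\rho_\kf) \;\le\; \int_\sss \rho_\kf \Tk(\rho_\kf) \;\le\; \norm{\Tk}\,\|\rho_\kf\|_2^2 \;\le\; \|\rho_\kf\|_2^2.
\]
Hence every inequality is an equality. In particular, from equality in the first step, $\int_\sss \rho_\kf\bigl(\Sk(\rho_\kf)-\rho_\kf\bigr)=0$, and since the integrand is non-negative it must vanish a.e. Thus on the set $A=\{\rho_\kf>0\}$ we have $\Sk(\rho_\kf)=\rho_\kf$, which combined with the functional equation gives $\rho_\kf = 1-e^{-\rho_\kf}$ a.e.\ on $A$. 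Since $1-e^{-t}<t$ for every $t>0$, this forces $\mu(A)=0$, i.e., $\rho_\kf=0$ a.e., and therefore $\rho(\kf)=\int_\sss\rho_\kf\dd\mu=0$.

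The only subtle point is the case $\norm{\Tk}=1$, where the $L^2$ norm of $\rho_\kf$ alone cannot be made to shrink (a geometric-series argument via iteration of $\Sk$ would handle $\norm{\Tk}<1$ effortlessly but would break down at the critical value). The argument above handles the boundary case uniformly, because it extracts equality in $\rho_\kf \le \Sk(\rho_\kf)$ rather than in the operator-norm step, and this equality is incompatible with the strict concavity gap between $\Pk(f)=1-e^{-\Sk(f)}$ and $\Sk(f)$ unless $\rho_\kf$ is null.
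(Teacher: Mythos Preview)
Your proof is correct and follows essentially the same approach as the paper: both arguments rest on the pointwise chain $\rho_\kf \le \Sk(\rho_\kf) \le \Tk(\rho_\kf)$, with strictness on $\{\rho_\kf>0\}$ coming from $1-e^{-t}<t$ for $t>0$. The paper finishes slightly more directly by observing that this strict pointwise inequality gives $\tn{\Tk\rho_\kf}>\tn{\rho_\kf}$ and hence $\norm{\Tk}>1$, whereas you integrate against $\rho_\kf$ and sandwich via \CS; the content is the same.
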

\begin{proof}
Suppose that $f$ is a solution to \eqref{fS} that is not $0$ a.e.
Since $-\log(1-t)>t$ for $0<t<1$, we have
$\Sk(f)(x)\ge f(x)$, with strict inequality on a set of positive
measure.
But $\Tk(f)(x)\ge \Sk(f)(x)$ by \eqref{st}, 
so $\Tk(f)(x)\ge f(x)$, with strict inequality on a set of positive
measure. Hence $\tn{\Tk f}>\tn{f}$, so $\norm{\Tk}> 1$.
\end{proof}

Lemmas 5.12 and 5.13 of~\cite{BJR} carry over to the present context,
with only minor modifications.
Given functions $f_1,f_2,\ldots$ and $f$, we write $f_n\upto f$
if the sequence $(f_n)$ is monotone increasing and converges to $f$ pointwise.
\begin{lemma}
If $0\le f\le 1$ and $\Phik(f)\ge f$, then $\Phik^m(f)\upto g$ as $m\to\infty$,
for some $1\ge g\ge f$ with $\Phik(g)=g$.
\end{lemma}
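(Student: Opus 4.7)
The plan is to exploit monotonicity of $\Phi_\kf$ together with monotone convergence. First I would verify that $\Phi_\kf$ is monotone: if $0\le f_1\le f_2\le 1$ then $\Phi_\kf(f_1)\le\Phi_\kf(f_2)$. This is immediate from the definition \eqref{Sk}, since each factor $1-f(x_i)$ is decreasing in $f$, so the product $\prod_{i=2}^{\kx}(1-f(x_i))$ is decreasing in $f$, hence $1-\prod_{i=2}^{\kx}(1-f(x_i))$ is increasing; thus $S_\kf(f_1)\le S_\kf(f_2)$, and applying the increasing map $t\mapsto 1-e^{-t}$ gives monotonicity of $\Phi_\kf$.

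Next, setting $f_m:=\Phi_\kf^m(f)$, the hypothesis $\Phi_\kf(f)\ge f$ combined with monotonicity gives by induction that $f_{m+1}\ge f_m$ for all $m\ge 0$. Since $f_m\le 1$ for all $m$, the sequence $(f_m)$ is monotone increasing and pointwise bounded, so $f_m\upto g$ for some measurable $g:\sss\to[0,1]$ with $g\ge f$. This establishes the first part of the claim.

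It remains to check $\Phi_\kf(g)=g$, and the one subtlety is passing the limit through $\Phi_\kf$. For each fixed $x$, as $f_m(x_i)\upto g(x_i)$ the integrand
\[
\kx\ka_\kx(x,x_2,\ldots,x_\kx)\Bigl(1-\prod_{i=2}^{\kx}(1-f_m(x_i))\Bigr)
\]
is non-negative and increases monotonically in $m$ to the corresponding integrand with $g$ in place of $f_m$. By the monotone convergence theorem applied term by term, and then summed over $\kx\ge 2$ (again using monotone convergence in the counting measure on $\kx$), we obtain $S_\kf(f_m)(x)\upto S_\kf(g)(x)$ for every $x$. Since $t\mapsto 1-e^{-t}$ is continuous, $\Phi_\kf(f_m)\upto\Phi_\kf(g)$ pointwise. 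But $\Phi_\kf(f_m)=f_{m+1}\upto g$ as well, so $\Phi_\kf(g)=g$, completing the proof.

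No step here is really an obstacle; the only thing to be careful about is that $\la(x)=S_\kf(1)(x)$ may be infinite on a null set in general, but we have already reduced to the case $\la(x)<\infty$ everywhere, and in any case the monotone convergence argument is valid even when the common limit $S_\kf(g)(x)$ is $+\infty$, since then $\Phi_\kf(g)(x)=1=\lim f_m(x)=g(x)$ automatically.
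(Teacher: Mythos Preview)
Your proof is correct and follows essentially the same route as the paper's own argument: monotonicity of $\Phi_\kf$ plus induction gives an increasing bounded sequence, and monotone convergence (applied to $S_\kf$) together with continuity of $t\mapsto 1-e^{-t}$ shows the limit is a fixed point. You have simply spelled out in more detail steps the paper leaves implicit, in particular the verification of monotonicity and the application of monotone convergence term by term.
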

\begin{proof}
Since $f\le\Phik(f)$, monotonicity of $\Phik$ gives $\Phik(f)\le \Phik^2(f)$ and,
by induction, $\Phik^m(f)\le \Phik^{m+1}(f)$ for all $m\ge 0$.
Since $0\le \Phik^m(f)\le 1$, it follows that
$g(x)=\lim_{m\to\infty} \Phik^m(f)(x)$ exists for every $x$, and $0\le g\le 1$.
From monotone convergence we have $\Sk(g)=\lim_{m\to\infty} \Sk(\Phi_k^m(f))$,
from which it follows that $\Phik(g)=g$.
\end{proof}

\begin{lemma}\label{l_fup}
If there is a function $f:\sss\to [0,1]$,
not a.e.\ $0$, such that
$\Sk(f)\ge (1+\delta)f$ 
for some $\delta>0$, then $\rho(\kf)>0$.
\end{lemma}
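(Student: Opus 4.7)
The plan is to find a small multiple $g=\eps f$ of the given $f$ that satisfies $\Pk(g)\ge g$ pointwise, and then invoke the preceding iteration lemma to produce a nonzero fixed point of $\Pk$, which by \refL{l_max} forces $\rho_\kf$ (and hence $\rho(\kf)$) to be strictly positive.

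The key technical step is to establish a sublinearity estimate for $\Sk$: for every $0\le\eps\le 1$ and every $h:\sss\to[0,1]$,
\begin{equation*}
 \Sk(\eps h)\ge \eps\, \Sk(h).
\end{equation*}
By the definition \eqref{Sk} and positivity, it suffices to prove the pointwise inequality
\begin{equation*}
 1-\prod_{i=2}^\kx(1-\eps y_i)\ge \eps\Bigl(1-\prod_{i=2}^\kx(1-y_i)\Bigr)
\end{equation*}
for $y_i\in[0,1]$. This I would prove by induction on the number of factors: writing $\phi_{\kx}(\eps)=1-\prod_{i=2}^\kx(1-\eps y_i)$, one has the recursion
\begin{equation*}
 \phi_{\kx+1}(\eps)-\eps\phi_{\kx+1}(1)=\bigl[\phi_\kx(\eps)-\eps\phi_\kx(1)\bigr]+\eps y_{\kx+1}\bigl[\phi_\kx(1)-\phi_\kx(\eps)\bigr],
\end{equation*}
and both bracketed terms are nonnegative (the second since $\phi_\kx$ is monotone increasing in $\eps$).

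Given this, I would combine it with the elementary inequality $1-e^{-t}\ge t-t^2/2$ (valid for all $t\ge 0$) and the hypothesis $\Sk(f)\ge(1+\delta)f$ to compute
\begin{equation*}
 \Pk(\eps f)=1-e^{-\Sk(\eps f)}\ge \Sk(\eps f)-\tfrac12\Sk(\eps f)^2\ge \eps(1+\delta)f-\tfrac12\eps^2(1+\delta)^2 f^2.
\end{equation*}
Since $0\le f\le 1$, the right side dominates $\eps f$ provided $\eps(1+\delta)^2 f\le 2\delta$, which holds whenever $\eps\le 2\delta/(1+\delta)^2$. Fix such an $\eps>0$ and set $g=\eps f$; then $\Pk(g)\ge g$ everywhere and $g$ is not a.e.\ zero since $f$ is not.

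Finally, the previous lemma (applied with $\Pk(g)\ge g$) yields $\Pk^m(g)\upto h$ where $h\ge g$ and $\Pk(h)=h$. Thus $h$ solves \eqref{fS} and is not a.e.\ zero, so by \refL{l_max}, $\rho_\kf\ge h$ pointwise, giving $\rho(\kf)=\int_\sss\rho_\kf\dd\mu\ge\int_\sss g\dd\mu>0$. The main obstacle is the sublinearity inequality for $\Sk$; everything else is then a short computation plus the lemmas already assembled.
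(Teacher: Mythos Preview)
Your approach is correct and essentially the same as the paper's (which simply points to the proof of Lemma~5.13 in~\cite{BJR}, replacing $T_\ka$ by $\Sk$); the sublinearity inequality $\Sk(\eps h)\ge\eps\,\Sk(h)$ is exactly the extra ingredient needed to make that argument go through for the nonlinear operator, and your inductive proof of it is fine.

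One small repair in your displayed chain: the step
\[
  \Sk(\eps f)-\tfrac12\Sk(\eps f)^2 \;\ge\; \eps(1+\delta)f-\tfrac12\eps^2(1+\delta)^2 f^2
\]
does not follow from $\Sk(\eps f)\ge\eps(1+\delta)f$ alone, since $t\mapsto t-\tfrac12 t^2$ is not monotone for $t>1$, and $\Sk(\eps f)(x)$ need not be bounded by~$1$. The fix is to swap the order of the two estimates: first use that $1-e^{-t}$ is increasing together with $\Sk(\eps f)\ge\eps(1+\delta)f$ to get
\[
  \Pk(\eps f)\;\ge\;1-e^{-\eps(1+\delta)f},
\]
and \emph{then} apply $1-e^{-t}\ge t-\tfrac12 t^2$ at $t=\eps(1+\delta)f(x)\in[0,\eps(1+\delta)]$. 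With $\eps\le 2\delta/(1+\delta)^2$ your conclusion $\Pk(\eps f)\ge\eps f$ follows exactly as you wrote, and the rest of the argument (the iteration lemma and \refL{l_max}) is correct.
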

\begin{proof}
The proof is the same as that of Lemma 5.13 in~\cite{BJR}, using $\Sk$ in place
of $T_\ka$.
\end{proof}

The next step is to show that if $\norm{\Tk}>1$, then there is a function
$f$ with the property described in \refL{l_fup}. In~\cite{BJR} we did this
by considering a bounded kernel. Here we have to be a little more
careful, as we are working with the non-linear operator $\Sk$ rather than
with $\Tk$; this is no problem if we truncate our kernels suitably.

\begin{num_definition}\label{Dbounded}
We call a hyperkernel $\kf=(\ka_r)_{r\ge2}$ {\em bounded} 
if two conditions hold: only
finitely many of the $\ka_\kx$ are non-zero, and each $\ka_\kx$ is
bounded.

Similarly (for later use), a general kernel family
$(\ka_F)_{F\in\F}$ is \emph{bounded} if only
finitely many of the $\ka_F$ are non-zero, and each $\ka_F$ is
bounded.
\end{num_definition}
In other words, $\kf$ is bounded if there are constants $R$ and $M$
such that $\ka_\kx=0$ for $\kx>R$, and $\ka_\kx$ is pointwise bounded by $M$
for $\kx\le R$. Note that if $\kf$ is bounded, then the corresponding
edge kernel $\kae$ is bounded in the usual sense.

Given a hyperkernel $\kf=(\ka_r)$, 
for each $M>0$ we let  $\kfM$ be the bounded hyperkernel obtained
from $\ka$ by truncating each $\ka_\kx$, $\kx\le M$, at $M$, and replacing
$\ka_\kx$ by a zero kernel for $\kx>M$.
Thus
\begin{equation}\label{trunc}
  \kaM_r=
  \begin{cases}
	\ka_r\wedge M, & r\le M,
\\
0, & r>M.
  \end{cases}
\end{equation}
The truncation $\kfM=(\kaM_F)_{F\in\F}$ of a general kernel family
$(\ka_F)_{F\in\F}$ is defined similarly, replacing
the condition $r\le M$ by $|F|\le M$.

\begin{lemma}\label{l_ef}
If $\norm{\Tk}>1$ then there is a $\delta>0$ and an $f:\sss\to [0,1]$,
not a.e.\ $0$,
such that $\Sk(f)\ge (1+\delta)f$.
\end{lemma}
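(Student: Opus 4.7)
The plan is to reduce first to the case of a bounded hyperkernel by truncation, then use a linearization argument. Since $\kaM_r\le\ka_r$ pointwise, we have $\kaeM\le\kae$ and $S_{\kfM}(f)\le\Sk(f)$ for every $f\colon\sss\to[0,1]$, so it suffices to produce, for some bounded truncation $\kfM$ with $\norm{T_{\kaeM}}>1$, a non-zero $f$ with $S_{\kfM}(f)\ge(1+\delta)f$. To choose such an $M$, I would note that $\Tk$ is self-adjoint with non-negative kernel, so $\norm{\Tk}=\sup_{g\ge0,\,\tn g=1}\langle g,\Tk g\rangle$ (possibly $+\infty$); monotone convergence applied to $\kaeM\upto\kae$ gives $\langle g,T_{\kaeM}g\rangle\upto\langle g,\Tk g\rangle$ for each such $g$, whence $\norm{T_{\kaeM}}\to\norm{\Tk}$ as $\Mtoo$.

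For such a bounded truncation, $\kaeM$ is bounded on $\sss\times\sss$, a product of probability spaces, so $T_{\kaeM}$ is a Hilbert--Schmidt, hence compact, self-adjoint operator on $L^2(\sss)$ with non-negative kernel. Standard Krein--Rutman/Perron-type arguments (as in~\cite{BJR}) then produce a non-zero $\psi\ge0$ with $T_{\kaeM}\psi=\norm{T_{\kaeM}}\psi$. Substituting back into the eigenfunction equation and using boundedness of $\kaeM$ together with $\psi\in L^1(\sss)$ then forces $\psi\in L^\infty(\sss)$.

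Finally I would compare $S_{\kfM}$ with its linearization $T_{\kaeM}$. From the elementary inequality
\begin{equation*}
1-\prod_{i=2}^{r}(1-f(x_i))\ge\sum_{i=2}^{r}f(x_i)-\sum_{2\le i<j\le r}f(x_i)f(x_j),
\end{equation*}
and the bound $\sum_{i<j}f(x_i)f(x_j)\le\tfrac{r-1}{2}\norm{f}_\infty\sum_{i=2}^{r}f(x_i)$ valid whenever $0\le f\le\norm{f}_\infty$, integration against $r\kaM_r$ (only $r\le M$ contributes) yields
\begin{equation*}
S_{\kfM}(f)(x)\ge\bigpar{1-\tfrac{M-1}{2}\norm{f}_\infty}T_{\kaeM}(f)(x).
\end{equation*}
Setting $f=\eps\psi$ with $\eps>0$ small enough that $\tfrac{M-1}{2}\eps\norm{\psi}_\infty<1-\norm{T_{\kaeM}}\qw$ then gives $S_{\kfM}(f)\ge(1+\delta)f$ for some $\delta>0$, and hence $\Sk(f)\ge S_{\kfM}(f)\ge(1+\delta)f$. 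The main obstacle is showing $\psi\in L^\infty$: without this the quadratic error in the linearization cannot be dominated uniformly and the argument breaks down. I would read boundedness directly off the eigenfunction equation rather than appeal to more abstract operator-theoretic regularity results.
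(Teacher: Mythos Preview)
Your proof is correct and follows essentially the same route as the paper: truncate to a bounded $\kfM$ with $\norm{T_{\kaeM}}>1$, take a bounded non-negative eigenfunction of $T_{\kaeM}$ (the paper cites Lemma~5.15 of~\cite{BJR} directly rather than bootstrapping $L^\infty$), and scale it down so that a linearization inequality yields $S_{\kfM}(f)\ge(1+\delta)f$. The only cosmetic difference is that the paper uses the inequality $1-\prod_i(1-y_i)\ge(1-\gamma)^{r-1}\sum_i y_i$ for $0\le y_i\le\gamma$ in place of your Bonferroni estimate.
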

\begin{proof}
We slightly modify the proof of Lemma 5.16 of~\cite{BJR}.

Consider the truncated hyperkernels $\kfM$ defined in \eqref{trunc}.
From \eqref{ce} and monotone convergence, the corresponding edge kernels
$\kaeM$ tend up to $\kae$ (which may be infinite in some places) pointwise.
Arguing as in the proof of Lemma 5.16 of~\cite{BJR}, since $\norm{\Tk}>1$
there is some positive $f$ with $\tn{f}=1$ and $1<\tn{\Tk f}\le\infty$.
By monotone convergence, $T_{\kaeM}f\upto \Tk f$,
so $\tn{T_{\kaeM}f}\upto \tn{\Tk f}$, and there is some $M$ with
$\norm{T_{\kaeM}}\ge \tn{T_{\kaeM}f}>1$.

Since $\kaeM$ is bounded, setting $\delta=(\norm{T_{\kaeM}}-1)/2>0$,
by Lemma 5.15 of~\cite{BJR} it follows
that there is a bounded $f\ge 0$ with $f$ not 0 a.e.\ such that
\[
 T_{\kaeM} f = \norm{T_{\kaeM}}f = (1+2\delta) f.
\]
We may assume that $0\le f\le1$.
If $0\le y_i\le\gam<1$, $i=1,\dots,r$, then (by induction)
$1-\prod_{i=1}^r(1-y_i)\ge(1-\gam)^{r-1}\sum_{i=1}^ry_i$, and it follows
that if $\gamma>0$ is chosen small enough, then
\[
 S_{\kfM} (\gamma f) 
\ge(1-\gam)^{M-1} T_{\kaeM}(\gam f)
\ge  (1+\delta) (\gamma f).
\]
Since
$\Sk(\gamma f)\ge S_{\kfM}(\gamma f)$, the result follows.
\end{proof}

Theorem~\ref{th2} follows by combining the results above.
\begin{proof}[Proof of Theorem~\ref{th2}]
Together Lemmas~\ref{l_sc}, \ref{l_fup} and \ref{l_ef} show that
$\rho(\kf)>0$ if and only if $\norm{\Tk}>1$.
Uniqueness is given by Theorem~\ref{unique}.
The final statement is immediate from \refL{l_red2}.
\end{proof}

\bigskip
Having proved \refT{th2}, our next aim is to prove \refT{th1}.
The basic strategy will involve comparing the neighbourhoods of a
vertex in the random graph $G(n,\kf)$ with the branching process $\bpk$.
As in~\cite{BJR}, it will be convenient to carry out the comparison
only for certain restricted hyperkernels. In order to deduce results
about $G(n,\kf)$ in general, one needs approximation results
both for the graph and for the branching process. We now turn
to such results for branching processes.

Lemma 6.3 and Theorems 6.4 and 6.5 of~\cite{BJR} carry over to the present
context, {\em mutatis mutandis}, using the results above about $\rho(\kf)$
instead of the equivalents in~\cite{BJR}, and replacing $T_\ka$
by $\Sk$ or $\Tk$ as appropriate: $\Sk$ when considering $\Phi_\ka$,
and $\Tk$ when arguing using $L^2$-norms.
In these results $\rho_\kf$ denotes the function $x\mapsto \rho_\kf(x)$,
and $\rho_{\ge k}(\kf,x)$ and $\rho_{\ge k}(\kf)$ denote
respectively the probabilities that $\bpk(x)$ and $\bpk$
have total size at least $k$, 
where the {\em size} of a branching process is the total number
of particles in all generations.

\begin{lemma}\label{L12}
  If\/ $\kf\le\kfp$, then $\rho(\kf)\le \rho(\kfp)$.
 \noproof
\end{lemma}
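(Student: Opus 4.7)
The plan is to deduce the monotonicity of $\rho$ directly from the functional equation characterization in \refL{l_max}, using the pointwise monotonicity of the relevant operators. The key observation is that if $\kf\le\kfp$ (pointwise for each $r$), then from \eqref{Sk} we immediately have $\Sk(f)\le S_{\kfp}(f)$ for any $0\le f\le1$, and hence $\Pk(f)\le \Phi_{\kfp}(f)$ pointwise. Also, each of $\Sk,S_{\kfp},\Pk,\Phi_{\kfp}$ is a monotone operator in its argument $f$.

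I would then define $\rho_t(\kf)(x)$ and $\rho_t(\kfp)(x)$ as the probabilities that $\bpk(x)$ and $\bp_{\kfp}(x)$, respectively, survive for at least $t$ generations, so (as in the proof of \refL{l_max}) $\rho_t(\kf)=\Pk(\rho_{t-1}(\kf))$ and similarly for $\kfp$, with $\rho_0\equiv1$ in both cases. A trivial induction on $t$ gives $\rho_t(\kf)\le\rho_t(\kfp)$ pointwise: combining monotonicity of $\Phi_{\kfp}$ in its argument with $\Pk\le\Phi_{\kfp}$, we get
\[
\rho_t(\kf)=\Pk(\rho_{t-1}(\kf))\le \Phi_{\kfp}(\rho_{t-1}(\kf))\le \Phi_{\kfp}(\rho_{t-1}(\kfp))=\rho_t(\kfp).
\]
Since $\rho_t\downto \rho_\kf$ (and likewise for $\kfp$), this gives $\rho_\kf\le\rho_{\kfp}$ pointwise, and integrating against $\mu$ yields $\rho(\kf)\le\rho(\kfp)$.

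Alternatively, one could give a direct coupling of $\bpk$ and $\bp_{\kfp}$: since the child-clique point processes have Poisson intensities \eqref{bpint} that satisfy the same pointwise inequality, one can couple so that at every particle the child cliques of $\bpk$ form a sub-point-process of those of $\bp_{\kfp}$, and hence the tree of $\bpk$ embeds into that of $\bp_{\kfp}$. Survival of $\bpk$ then forces survival of $\bp_{\kfp}$. I do not expect any real obstacle: the result is essentially a one-line consequence of monotonicity of the operators, and the only thing to be careful about is that the argument uses \refL{l_max}'s construction of $\rho_\kf$ as the monotone limit of $\Pk^t(1)$, not its characterization as the maximum fixed point (which would take more work to compare across different kernel families).
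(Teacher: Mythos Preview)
Your argument is correct and is exactly the standard one: the paper gives no proof here (it simply notes that Lemma~6.3 of~\cite{BJR} carries over, with $T_\ka$ replaced by $\Sk$), and both your iteration argument via $\rho_t=\Phik^t(1)$ and your coupling sketch are precisely the kinds of routine verification the paper has in mind. Your parenthetical caution about using the iterative construction rather than the maximal-fixed-point characterization is well placed but not strictly necessary, since the coupling argument sidesteps that issue entirely.
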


\begin{theorem}\label{TappB}
  \begin{thmenumerate}
\item
 Let $\kfn$, $n=1,2,\dots$, be a sequence of hyperkernels on $(\sss,\mu)$
 increasing
 \aex{} to an integrable hyperkernel $\kf$.
Then
$\rho_{\kfn}\upto \rho_\kf$ \aex{} and
$\rho(\kfn)\upto\rho(\kf)$.
\item
 Let $\kfn$, $n=1,2,\dots$, be a sequence of 
integrable
hyperkernels on $(\sss,\mu)$
 decreasing 
 \aex{} to $\kf$.
Then
$\rho_{\kfn}\downto \rho_\kf$ \aex{} and
$\rho(\kfn)\downto\rho(\kf)$.
\noproof
 \end{thmenumerate}
\end{theorem}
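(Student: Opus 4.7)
The overarching plan is to set $g:=\lim_n\rho_{\kfn}$ in part (i) (respectively $h$ in part (ii)), verify that the limit is a fixed point of \eqref{fS} for $\kf$, and sandwich it with $\rho_\kf$. One side of the sandwich comes from the natural monotone coupling of branching processes---whenever $\kf\le\kfp$ one can construct $\bpk\subseteq\bp_{\kfp}$ on a common probability space by thinning Poisson processes---which strengthens \refL{L12} to the pointwise inequality $\rho_{\kfn}(x)\le\rho_\kf(x)$ (respectively $\ge$) for every $x$; the other comes from \refL{l_max}, applied once the fixed-point property has been established.

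I would do part (ii) first, as it is essentially automatic. Setting $h:=\lim_n\rho_{\kfn}$, the coupling yields $h\ge\rho_\kf$. For each fixed $x$ the integrand in $\Sk_{\kfn}(\rho_{\kfn})(x)$ is non-negative, decreasing in $n$, and dominated by $r\ka_{1,r}(x,\cdot)$, whose total integral over $r$ is $\la_1(x)<\infty$ a.e.\ by integrability of $\kf_1$; dominated convergence gives $\Sk_{\kfn}(\rho_{\kfn})\downto\Sk_\kf(h)$ a.e., hence $h=\Phi_\kf(h)$. Then \refL{l_max} gives $h\le\rho_\kf$, so $h=\rho_\kf$ a.e., and bounded convergence applied to $\rho(\kfn)=\int\rho_{\kfn}\,\dd\mu$ yields $\rho(\kfn)\downto\rho(\kf)$.

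Part (i) follows the same template but with an extra twist. Again $g\le\rho_\kf$ from the coupling, and monotone convergence applied from below inside \eqref{Sk} (all integrands non-negative and increasing in both $\ka_{n,r}$ and $\rho_{\kfn}$) gives $g=\Phi_\kf(g)$. The non-trivial step is showing $g\ge\rho_\kf$. Assume first that $\kf$ is irreducible: if $\norm{\Tk}\le1$ then $\rho_\kf\equiv0$ by \refL{l_sc} and there is nothing to prove, while if $\norm{\Tk}>1$ then MCT in \eqref{ce} shows that the edge kernels $\kae^{(n)}$ of $\kfn$ increase to $\kae$, and since $\langle f,T_{\kae^{(n)}}f\rangle\upto\langle f,\Tk f\rangle$ for each non-negative $f$ and the operator norm of such a symmetric non-negative kernel equals the supremum of this quadratic form over $f\ge0$ with $\tn f=1$, we get $\norm{T_{\kae^{(n)}}}\upto\norm{\Tk}$; hence $\rho(\kfn)>0$ for $n$ large by \refT{th2}, $g\not\equiv0$, and \refT{unique} forces $g=\rho_\kf$ a.e. The main obstacle is then the general reducible case, where $g$ could a priori vanish on one irreducible component of $\kae$ while $\rho_\kf$ is positive there; I would address this by decomposing $\sss$ into the maximal irreducible classes $\{C_\alpha\}$ of $\kae$ (along the lines of \refR{R_red}) and applying the irreducible argument on each class separately, using that a branching process started at $x\in C_\alpha$ cannot leave $C_\alpha$ (so $\rho_\kf(x)$ equals the survival probability of the restricted process) and that $\kfn|_{C_\alpha}\upto\kf|_{C_\alpha}$ still satisfies the hypotheses of the irreducible case. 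This yields $g=\rho_\kf$ a.e.\ on $\sss$, and $\rho(\kfn)\upto\rho(\kf)$ follows by monotone convergence.
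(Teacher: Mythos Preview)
Your proposal is correct and follows essentially the approach the paper intends: the paper gives no explicit proof here, stating only that Theorems~6.4 and~6.5 of~\cite{BJR} carry over \emph{mutatis mutandis}, using $\Sk$ in the fixed-point arguments and $\Tk$ in the $L^2$-norm arguments, together with \refT{unique} and \refL{l_max} in place of their BJR analogues. Your write-up is precisely this carrying-over: the monotone/dominated convergence passage to a fixed point of \eqref{fS}, the sandwich via the coupling and \refL{l_max}, the use of $\|T_{\kae^{(n)}}\|\upto\|\Tk\|$ and \refT{th2} to force $g\not\equiv0$ in the supercritical irreducible case, and the decomposition into irreducible pieces (as in \refR{R_red}) for the general case.
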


\begin{theorem}\label{TappC}
  \begin{thmenumerate}
\item
 Let $\kfn$, $n=1,2,\dots$, be a sequence of hyperkernels on $(\sss,\mu)$
 increasing 
 \aex{} to a hyperkernel $\kf$.
Then, for every $k\ge 1$,
$\rhogek(\kfn;x)\upto\rhogek(\kf;x)$ for \aex{} $x$ and
$\rhogek(\kfn)\upto\rhogek(\kf)$.
\item
 Let $\kfn$, $n=1,2,\dots$, be a sequence of 
integrable
hyperkernels on $(\sss,\mu)$
 decreasing
 \aex{}
to  $\kf$.
Then, for every $k\ge1$,
$\rhogek(\kfn;x)\downto\rhogek(\kf;x)$ for \aex{} $x$ and
$\rhogek(\kfn)\downto\rhogek(\kf)$.
\noproof
 \end{thmenumerate}
\end{theorem}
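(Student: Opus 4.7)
The plan is to prove both parts via a simultaneous coupling of all the branching processes on one probability space, combined with a monotone--convergence argument for the events $\set{|\bpk(x)|\ge k}$. For part~(i), I would take $\bpk(x)$ as the ambient process: realize each particle $v$ of type $y$ together with a PPP of child cliques on $\bigsqcup_{r\ge 2}\sss^{r-1}$ of intensity $r\ka_r(y,\cdot)$, and attach to each point an independent uniform mark $U\in[0,1]$; declare a point at location $z$ to belong to $\bp_{\kfn}$ iff $U\le\ka_{r,n}(y,z)/\ka_r(y,z)$. By Poisson thinning this realizes $\bp_{\kfn}(x)$ as an increasing subtree of $\bpk(x)$. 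For part~(ii) I would use $\bp_{\kf_1}(x)$ as the ambient tree (available since $\kf_1$ is integrable) and thin analogously, so that the $\bp_{\kfn}(x)$ form a decreasing sequence of subtrees all containing $\bpk(x)$.

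Monotonicity of $\rhogek(\kfn;x)$ in $n$ is immediate from these couplings. For~(i), if $|\bpk(x)|\ge k$, then $\bpk(x)$ contains a finite subtree $T$ with $|T|=k$, involving only finitely many child cliques. Since $\ka_{r,n}\upto\ka_r$ a.e.\ and the PPP intensities are absolutely continuous in their location variables, the retention threshold at each edge of $T$ tends up to $1$ almost surely; as $T$ has only finitely many edges, there is a random $N$ with $T\subseteq\bp_{\kfn}(x)$ for all $n\ge N$, forcing $|\bp_{\kfn}(x)|\ge k$. Combined with monotonicity this yields $\rhogek(\kfn;x)\upto\rhogek(\kf;x)$, and the integrated version $\rhogek(\kfn)\upto\rhogek(\kf)$ follows by monotone convergence. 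The subcase where $\la(y)=\infty$ for some type $y$ reached by the process is separate but easy: $\bpk(x)$ is a.s.\ infinite and $\la_n(y)\upto\infty$ immediately forces $\rhogek(\kfn;x)\to 1$.

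The crux of the argument, and the only genuine obstacle, lies in the decreasing direction of (ii): a nested family of subtrees each of size $\ge k$ could in principle have intersection smaller than $k$. The rescue is to use integrability of $\kf_1$, which gives $\la_1<\infty$ a.e.\ and hence makes $\bp_{\kf_1}(x)$ almost surely locally finite. On this event, if $\bpk(x)$ has fewer than $k$ particles then it is a finite subtree with finite boundary in $\bp_{\kf_1}(x)$; for each boundary edge $e$ the mark satisfies $U_e>\ka_r(y,z)/\ka_{r,1}(y,z)=\lim_n\ka_{r,n}(y,z)/\ka_{r,1}(y,z)$, so $e\notin\bp_{\kfn}(x)$ for all sufficiently large $n$. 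Taking the max over the finitely many boundary edges gives some $N$ with $\bp_{\kfn}(x)=\bpk(x)$ for all $n\ge N$, contradicting $|\bp_{\kfn}(x)|\ge k$. Hence $\bigcap_n\set{|\bp_{\kfn}(x)|\ge k}=\set{|\bpk(x)|\ge k}$ almost surely, and the decreasing statement follows by continuity of probability, with $\rhogek(\kfn)\downto\rhogek(\kf)$ obtained by dominated convergence in $x$.
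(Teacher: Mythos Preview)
Your argument is correct and is exactly the coupling approach the paper intends; the paper gives no proof of its own here, merely stating that Theorems~6.4--6.5 of \cite{BJR} carry over \emph{mutatis mutandis} to the compound-Poisson setting, and your Poisson-thinning construction with uniform marks is precisely that adaptation.

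One minor remark: your separate treatment of the subcase $\la(y)=\infty$ in part~(i) is unnecessary and slightly garbled (the assertion that $\la_n(y)\upto\infty$ ``immediately forces $\rhogek(\kfn;x)\to 1$'' conflates a statement about a particular realization with a limit of probabilities, and skips the step of showing $\bp_{\kfn}(x)$ reaches type $y$). In fact your main argument already covers this case without modification: even when $\la(y)=\infty$, the child-clique Poisson process at a type-$y$ particle has $\sigma$-finite intensity (each $\ka_r(y,\cdot)$ is pointwise finite and $\mu^{r-1}$ is a probability measure), so the ambient tree $\bpk(x)$ is well-defined as a possibly infinitely-branching rooted tree, and your finite-witness-subtree argument applies unchanged.
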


\begin{remark}
The assumption that $\kfn$ be integrable in Theorems
\ref{TappB}(ii) and \ref{TappC}(ii) can be weakened to
$\la_{\kfn}(x)<\infty$ for a.e.\ $x$,
where $\la_{\kfn}(x)$ is the expected number of
child cliques in
$\bp_{\kfn}$ of a particle of type $x$;
see \cite{BJR}.
\end{remark}

\section{Local coupling}\label{sec_loc}

We now turn to the local coupling between our random graph
and the corresponding branching
process, relating the distribution of small components in $G(n,\kf)$
to the branching process $\bpk$.
In~\cite{BJR}, we were essentially forced to condition on the vertex types,
since these were allowed to be deterministic to start with.
Here, with \iid\ vertex types, there
is no need to do so. This allows us to couple directly for
all bounded hyperkernels, rather than simply for finite type ones.

We shall consider a variant of the usual component exploration process,
designed to get around the following problem. When we test edges from a
given vertex $v$ to all other vertices, the probability of finding a given edge
$vw$ depends on the type of $w$ as well as that of $v$. Hence,
{\em not} finding such an edge changes the conditional distribution
of the type of $w$. If the kernel is well behaved, it is easy to
see that this is a small effect. Rather than quantify this, it is easier
to embed $G(n,\kf)$ inside a larger random graph with uniform kernels.
Testing edges in the larger graph does not affect the conditional
distribution of the vertex types; we make this precise below.
In doing so, it will be useful to take the hypergraph viewpoint: given
a hyperkernel $\kf$, let $H(n,\kf)$ be the hypergraph on $[n]$
constructed according to the same rules as $G(n,\kf)$, except that
instead of adding a $K_\kx$ we add a hyperedge with $\kx$ vertices.
In fact, we consider the Poisson version of the model, allowing multiple
copies of the same hyperedge.

Let $\kf$ be a bounded hyperkernel, and let $\kfpl$ be a corresponding
upper bound, so $\kapl_\kx$ is the constant kernel $M$ for $\kx\le R$,
and zero for $\kx>R$, while $\ka_\kx\le\kapl_r$ holds pointwise for all $\kx$.

Taking, as usual, our vertex types $x_1,\ldots,x_n\in \sss$
to be independent, each having the distribution $\mu$,
we construct coupled random (multi-)hypergraphs $H_n$ and $H^+_n$ on $[n]$
as follows: first construct $H^+_n=H(n,\kf^+)$
by taking, for every $2\le r\le R$, a Poisson $\Po(r!M/n^{r-1})$ number of copies
of each possible $r$-element hyperedge, with all
these numbers independent.
Although in our formal definition of $H^+_n$ we first
decide the vertex types, $H^+_n$ is clearly independent
of these types. Hence, given $H^+_n$, the types are (still)
\iid\ with distribution $\mu$.

Given $H^+_n$ and the \iid\ types $x_1,\ldots,x_n$ of the vertices,
we may form $H_n$ by selecting each hyperedge $\{v_1,\ldots,v_\kx\}$ of $H_n^+$ to be
a hyperedge of $H_n$ with probability $\ka_\kx(x_{v_1},\ldots,x_{v_\kx})/M$,
independently of all other hyperedges.
It is easy to see that this gives the right distribution for $H_n=H(n,\kf)$.
(If we disallowed multiple copies of an edge, there would be an irrelevant
small correction here.)

Turning to the branching processes, there is an analogous coupling of
$\bpk$ and $\bp_{\kf^+}$: first construct $\bp_{\kf^+}$, which
may be viewed as a single-type process, according
to our two-step construction via child cliques.
Then assign each particle a type
according to the distribution $\mu$,
independently of the other particles and of the branching process.
Then form the child cliques in $\bpk$ by keeping each child
clique in $\bp_{\kf^+}$ with an appropriate probability depending
on the types, deleting not only the children corresponding
to deleted child cliques, but also all their descendants.

Let $v\in [n]$ be chosen uniformly at random, independently
of $H_n$ and $H_n^+$.
Let $\Gamma_d$ denote the $d$-neighbourhood
of $v$ in $H_n$, and $\Gamma_d^+$ that in $H_n^+$.
Counting the expected number of cycles shows that for any fixed $d$, 
the hypergraph $\Ga_d^+$ is whp treelike.
Furthermore, standard arguments as for $G(n,c/n)$ show that one may couple $\Ga_d^+$
and the first $d$ generations of $\bp_{\kf^+}$ so as
to agree in the natural sense whp. 
When $\Ga_d^+$ is treelike, then $\Ga_d\subset \Ga_d^+$ may be constructed
using exactly the same random deletion process that gives (the first
$d$ generations of) $\bpk$ as a subset of $\bp_{\kf^+}$.
It follows that $\Ga_d$ and the first $d$ generations of $\bpk$ may be
coupled to agree whp.

Recalling that $G(n,\kf)$ and $H_n$ have the same components,
for any fixed $k\ge 1$ one can determine whether the component containing
$v$ has exactly $k$ vertices by examining $\Ga_{k+1}$.
Writing $N_k(G)$ for the number of vertices of a graph $G$ that
are in components of size $k$, it follows
that
\[ 
 \E N_k(G(n,\kf)) = n\Pr(|\bpk|=k) +o(n).
\]
As in~\cite{BJR}, starting from two random vertices easily gives a corresponding
second moment bound, giving convergence in probability.
\begin{lemma}\label{nkbdd}
Let $\kf$ be a bounded hyperkernel. Then
\[
 \frac{1}{n} N_k(G(n,\kf)) \pto \Pr(|\bpk|=k)
\]
for any fixed $k$.\noproof
\end{lemma}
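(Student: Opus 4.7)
The plan is to make rigorous the coupling sketched just above the lemma statement, then extract both a first and second moment estimate on $N_k$. Throughout, work with the Poisson multi-graph version and fix the bounded hyperkernel $\kf$ together with a constant upper bound $\kfpl$ (constant $M$ on $r$-tuples for $r\le R$, zero for $r>R$) as in the setup. The crucial feature of this coupling is that $H_n^+ = H(n,\kfpl)$ is constructed without reference to the vertex types $x_1,\dots,x_n$; so conditional on $H_n^+$, the types remain \iid\ with law $\mu$, and we can recover $H_n = H(n,\kf)$ by independently thinning each hyperedge $\{v_1,\dots,v_r\}$ of $H_n^+$ with retention probability $\ka_r(x_{v_1},\dots,x_{v_r})/M$.

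First I would handle the first moment. Pick $v\in[n]$ uniformly at random, independently of everything. For fixed $d=k+1$, a standard cycle-counting computation (identical to the one for $G(n,c/n)$, using boundedness of $\kfpl$) shows that $\E|\Ga_d^+|=O(1)$ and $\Pr(\Ga_d^+\text{ contains a cycle or repeated hyperedge})=O(1/n)$. On the event that $\Ga_d^+$ is a tree, exploring $\Ga_d^+$ vertex by vertex produces, at each step, a vector of independent Poisson numbers of new hyperedges to fresh vertices, whose intensities match those prescribed in the definition of $\bp_{\kfpl}$ up to an error $O(1/n)$ per step. Hence $\Ga_d^+$ and the first $d$ generations of $\bp_{\kfpl}$ admit a coupling that agrees whp. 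Applying the same type-dependent thinning on both sides produces a coupling in which $\Ga_d$ (inside $H_n$, which has the same components as $G(n,\kf)$) agrees with the first $d$ generations of $\bpk$ whp. Since the event $\{|\bpk|=k\}$ is determined by the first $k+1$ generations, this gives
\[
\Pr(v\text{ lies in a component of size }k) = \Pr(|\bpk|=k) + o(1),
\]
and therefore $\E N_k(G(n,\kf))/n \to \Pr(|\bpk|=k)$.

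For the second moment, pick two vertices $v,w\in[n]$ independently and uniformly. The same cycle-counting estimate, applied to $\Ga_d^+(v)\cup\Ga_d^+(w)$, shows that with probability $1-O(1/n)$ these two neighbourhoods are disjoint, each a tree, and together look (under the above coupling) like two independent copies of $\bp_{\kfpl}$ truncated at depth $d$. Thinning preserves independence, so on this event the indicators that $v$ and $w$ lie in components of size exactly $k$ behave like two independent copies of $\ett{|\bpk|=k}$. Consequently $\E[N_k(G(n,\kf))^2]/n^2 \to \Pr(|\bpk|=k)^2$, and combined with the first moment calculation we obtain $\Var(N_k/n)\to 0$, yielding convergence in probability as claimed.

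The main technical point is the coupling in the second paragraph: verifying that each exploration step in $\Ga_d^+$ produces Poisson offspring intensities matching $\bp_{\kfpl}$ up to additive error $O(1/n)$, uniformly over the history (here the upper bound on atom sizes $R$ and on the kernel values is essential, since $k$ and $R$ are both fixed, keeping the accumulated error $O(1/n)$ after finitely many steps). The disjointness and independence of the two exploration processes in the second-moment step then follows from the same bounded-intensity calculation and causes no additional difficulty.
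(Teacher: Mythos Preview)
Your proposal is correct and follows essentially the same approach as the paper: the paper's argument (given in the discussion immediately preceding the lemma) constructs exactly this coupling via $H_n^+$ and its type-independent Poisson structure, then thins to $H_n$ and $\bpk$ simultaneously, deduces the first-moment estimate from the treelike neighbourhood coupling, and appeals to the same two-vertex second-moment bound for concentration. You have simply filled in a few more details than the paper chose to spell out.
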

Of course it makes no difference whether we work with $N_k$ or
$N_{\ge k}=n-\sum_{j=1}^{k-1} N_{j}$: \refL{nkbdd} also tells us that
\begin{equation}\label{ngek}
 \frac{1}{n} N_{\ge k}(G(n,\kf)) \pto \Pr(|\bpk|\ge k).
\end{equation}

The extension to arbitrary hyperkernels
is easy from Theorem~\ref{TappC}.

\begin{lemma}\label{nkint}
Let $\kf$ be an integrable hyperkernel. Then for each fixed $k$ we have
\[
 \frac{1}{n} N_{\ge k}(G(n,\kf)) \pto \rhogek(\kf).
\]
\end{lemma}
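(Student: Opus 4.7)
The plan is to reduce to the bounded case (\refL{nkbdd}, or rather \eqref{ngek}) by sandwiching $\kf$ between its truncations $\kfM$ from \eqref{trunc}. Working in the Poisson multi-graph version, the decomposition $\ka_r = \kaM_r + (\ka_r-\kaM_r)$ splits the Poisson rate for each $r$-tuple additively, so we may couple the construction to have $G(n,\kf) = G(n,\kfM)\cup G(n,\kfp)$, where $\kfp$ denotes the residual hyperkernel with $\ka_r' = \ka_r - \kaM_r \ge 0$.

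For the lower bound, adding atoms can only merge components, so $N_{\ge k}(G(n,\kf)) \ge N_{\ge k}(G(n,\kfM))$. Since $\kfM$ is bounded, \eqref{ngek} yields $n\qw N_{\ge k}(G(n,\kfM)) \pto \rhogek(\kfM)$; as $M\to\infty$ the truncations $\kfM$ increase pointwise to $\kf$, so \refT{TappC}(i) gives $\rhogek(\kfM)\upto\rhogek(\kf)$.

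For the matching upper bound, let $Z$ be the total number of (vertex, atom) incidences in $G(n,\kfp)$, i.e., the sum of the sizes of all atoms added there. A direct first-moment calculation (as for \refT{Tedges}) gives $\E Z = n\,\ii{\kfp}\cdot(1+O(1/n))$, and since $\kf$ is integrable, dominated convergence (with $\sum_r r\ka_r$ as dominating function) yields $\ii{\kfp}\to 0$ as $M\to\infty$. The geometric observation is this: if a vertex $v$ satisfies $|C_{G(n,\kfM)}(v)| < k$ but $|C_{G(n,\kf)}(v)| \ge k$, then some $\kfp$-atom must touch $C_{G(n,\kfM)}(v)$, so that $\kfM$-component meets the set $A$ of vertices lying in some $\kfp$-atom. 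Each such $\kfM$-component contributes fewer than $k$ vertices to the set of newly counted $v$'s, and the number of $\kfM$-components meeting $A$ is at most $|A|\le Z$. Hence
\[
 N_{\ge k}(G(n,\kf)) - N_{\ge k}(G(n,\kfM)) \le kZ.
\]
Given $\eps>0$, pick $M$ large enough that $\rhogek(\kf) - \rhogek(\kfM) < \eps$ and $\ii{\kfp}$ is small; Markov's inequality on $Z$ then gives $kZ\le\eps n$ whp, and combined with the lower bound this places $n\qw N_{\ge k}(G(n,\kf))$ within $O(\eps)$ of $\rhogek(\kf)$ whp. Since $\eps$ was arbitrary, the claimed convergence in probability follows.

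The main obstacle is the geometric bound above: one must observe that the vertices newly counted by $N_{\ge k}$ are controlled by the vertex-atom incidences $Z$ in $\kfp$, rather than by any a priori larger quantity (such as the total size of the merged $\kfM$-components, which might be dominated by a single huge component). The size-$<k$ constraint on the merged components is what saves us, giving the factor $k$ in the bound $kZ$. The remainder is a standard two-sided sandwich, combining graph approximation via the Poisson coupling with branching-process approximation via \refT{TappC}.
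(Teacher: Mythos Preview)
Your proof is correct and follows essentially the same truncation-and-coupling route as the paper. The only cosmetic difference is the bookkeeping for the upper bound: the paper replaces each residual hyperedge by a spanning tree and uses that adding one edge changes $N_{\ge k}$ by at most $2k$, whereas you bound the change directly by $k$ times the number of vertices lying in residual atoms; both yield the same control via Markov's inequality and integrability of $\kf$.
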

\begin{proof}
As in~\cite{BJR}, we simply approximate $\kf$ by bounded hyperkernels.
For $M>0$ let $\kfM$ be the truncated hyperkernel defined by \eqref{trunc}.

Let $k\ge 1$ be fixed, and let $\eps>0$ be arbitrary.
From  monotone convergence and integrability,
\[
 \lim_{M\to\infty} \sum_{\kx\ge 2} \int_{\sss^\kx}r\kaM_\kx = 
  \sum_{\kx\ge 2} \int_{\sss^\kx}r\ka_\kx < \infty,
\]
so for $M$ large enough we have
\[
 \Delta = \sum_{\kx\ge 2}  \int_{\sss^\kx}r(\ka_\kx-\kaM_\kx) \le \eps^2/(6k),
\]
say.
By Theorem~\ref{TappC}(i), increasing $M$ if necessary, we may also assume
that
\begin{equation}\label{rcl}
 \rhogek(\kfM)\ge \rhogek(\kf)-\eps/3.
\end{equation}

Since $\kfM\le \kf$ holds pointwise, we may couple the hypergraphs
$H_n'$ and $H_n$
associated to $G(n,\kfM)$ and $G(n,\kf)$ so that $H_n'\subseteq H_n$.
Recall that $G(n,\kf)$ is produced from $H_n$ by replacing each
hyperedge $E$ with $r$ vertices by an $r$-clique. However, as noted earlier,
if we form $G_n$ from $H_n$ by replacing each $E$ by any connected simple
graph on the same set of vertices, then $G_n$ and $G(n,\kf)$ will have exactly the same
component structure, and in particular $N_{\ge k}(G_n)=N_{\ge k}(G(n,\kf))$.
Let us form $G_n$ and $G_n'$ in this way from $H_n$ and $H_n'$, replacing
any hyperedge with $r$ vertices by some tree on the same set
of vertices. Recalling that $H_n'\subseteq H_n$,
we may of course assume that $G_n'\subseteq G_n$.

Writing $e_r(H)$
for the number of $r$-vertex hyperedges in a
hypergraph $H$, 
\begin{equation*}
  \begin{split}
 \E\bb{ |E(G_n)\setminus E(G_n')| } &\le \sum_{r\ge 2} (r-1)\E\bb{e_r(H_n)-e_r(H_n')} 
\\
 &\le \sum_{r\ge 2} {(r-1)n}\int_{\sss^r} (\ka_r-\kaM_r)
\le n\Delta.
  \end{split}
\end{equation*}

Hence,
\[
 \Pr\bb{|E(G_n)\setminus E(G_n')| \ge \eps n/6k } \le n\Delta/(\eps n/6k) \le \eps.
\]
Recalling that $G_n'\subseteq G_n$ and noting that adding one edge to a graph cannot
change $N_{\ge k}$ by more than $2k$, we see that with probability at least $1-\eps$
we have
\[
 \bm{ N_{\ge k}(G(n,\kfM)) - N_{\ge k}(G(n,\kf)) }
 = \bm{ N_{\ge k}(G_n')-N_{\ge k}(G_n) }
 \le 2k(\eps n/6k) = \eps n/3.
\]
Applying Lemma~\ref{nkbdd} (or rather~\eqref{ngek})
to the bounded hyperkernel $\kfM$, we have
$\frac{1}{n} N_{\ge k}(G(n,\kfM))\pto \rhogek(\kfM)$. Using \eqref{rcl}
it follows that when $n$ is large enough,
with probability at least $1-2\eps$, say, we have
$|\frac{1}{n} N_{\ge k}(G(n,\kf))- \rhogek(\kf)|\le \eps$.
Since $\eps>0$ was arbitrary, we thus have
$\frac{1}{n} N_{\ge k}(G(n,\kf)) \pto  \rhogek(\kf)$ as required.
\end{proof}

\section{The giant component}\label{sec_giant}

The local coupling results of the previous section easily give us the
`right' number of vertices in large components. As usual, we
will pass from this to a giant component by using the `sprinkling'
method of Erd\H os and R\'enyi~\cite{ER_evol}, first uncovering the bulk of the edges,
and then using the remaining `sprinkled' edges to join
up the large components.
The following lemma gathers together the relevant consequences
of the results in the previous section.

\begin{lemma}\label{l_easypart}
Let $\kf=(\ka_r)$ be an integrable hyperkernel, and let $G_n=G(n,\kf)$. 
Then $C_1(G_n)\le \rho(\kf)n +\op(n)$.
Furthermore, given any $\eps>0$, there is a $\delta>0$
and a function $\omega=\omega(n)\to\infty$
such that
\begin{equation}\label{nbp}
 N_{\ge \omega}(G_n') \ge (\rho(\kf)-\eps)n
\end{equation}
holds whp, where $G_n'=G(n,(1-\delta)\kf)$.
\end{lemma}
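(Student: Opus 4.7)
My plan is to derive both statements of the lemma from Lemma~\ref{nkint} combined with the monotone approximation result Theorem~\ref{TappB}(i), together with the elementary observation that $\rhogek(\kf)\downto\rho(\kf)$ as $k\to\infty$ (by continuity of measure applied to the nested events $\{|\bpk|\ge k\}$ decreasing to the survival event).

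For the upper bound on $C_1$, I would fix $\eps>0$ and pick $k$ so large that $\rhogek(\kf)\le\rho(\kf)+\eps$. Lemma~\ref{nkint} then gives $N_{\ge k}(G_n)/n\pto\rhogek(\kf)$, so $N_{\ge k}(G_n)\le(\rho(\kf)+2\eps)n$ whp. Since the largest component either has size less than $k$ (in which case $C_1(G_n)\le k=\op(n)$ trivially) or contributes its full vertex set to $N_{\ge k}$, we obtain $C_1(G_n)\le(\rho(\kf)+2\eps)n$ whp. Letting $\eps\downto 0$ yields $C_1(G_n)\le\rho(\kf)n+\op(n)$.

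For the second statement, the first step is to pin down $\delta$. Since $(1-\delta)\kf\upto\kf$ pointwise as $\delta\downto 0$, Theorem~\ref{TappB}(i) lets me choose $\delta>0$ so small that $\rho((1-\delta)\kf)\ge\rho(\kf)-\eps/2$. For each \emph{fixed} $k\ge 1$, Lemma~\ref{nkint} applied to the integrable hyperkernel $(1-\delta)\kf$ gives $N_{\ge k}(G_n')/n\pto\rhogek((1-\delta)\kf)\ge\rho((1-\delta)\kf)\ge\rho(\kf)-\eps/2$, using the trivial bound $\rhogek\ge\rho$. Hence for each fixed $k$, the event $\{N_{\ge k}(G_n')\ge(\rho(\kf)-\eps)n\}$ holds whp as $n\to\infty$.

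The remaining step, which is the only slightly delicate point, is to upgrade this fixed-$k$ statement to one with a function $\omega(n)\to\infty$. I would use a standard diagonal argument: choose an increasing sequence $n_1<n_2<\cdots$ tending to infinity such that for every $k$ and all $n\ge n_k$ the event $\{N_{\ge k}(G_n')\ge(\rho(\kf)-\eps)n\}$ has probability at least $1-1/k$, and set $\omega(n):=\max\{k:n_k\le n\}$ (with $\omega(n):=1$ for $n<n_1$). Then $\omega(n)\to\infty$ while $\Pr\bigl(N_{\ge\omega(n)}(G_n')<(\rho(\kf)-\eps)n\bigr)\to 0$, which is exactly \eqref{nbp}. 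The substantive content of the lemma is entirely contained in Lemma~\ref{nkint} and Theorem~\ref{TappB}(i); this diagonalization is the only non-mechanical piece, and it is routine.
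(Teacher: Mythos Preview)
Your proof is correct and follows essentially the same approach as the paper's: both deduce everything from Lemma~\ref{nkint}, the fact that $\rhogek(\kf)\downto\rho(\kf)$, and Theorem~\ref{TappB}(i), with a diagonal argument to pass from fixed $k$ to $\omega(n)\to\infty$. The only cosmetic difference is that the paper first packages the diagonalization into a single statement $N_{\ge\omega}(G_n)/n\pto\rho(\kf)$ and applies it to both $G_n$ and $G_n'$, whereas you handle the upper bound on $C_1$ with a fixed $k$ and diagonalize only for the second part; the content is identical.
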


\begin{proof}
From Lemma~\ref{nkint} we have $\frac{1}{n} N_{\ge k}(G_n)\pto \rhogek(\kf)$
for each fixed $k$.
Since $\rhogek(\kf)\to \rho(\kf)$ as $k\to\infty$,
it follows that for some $\omega=\omega(n)\to \infty$ we have
\begin{equation}\label{nbig}
 \frac{1}{n} N_{\ge \omega}(G_n)\pto \rho(\kf);
\end{equation}
we may and shall assume that $\omega=o(n)$.
Since $C_1(G_n)\le \max\{\omega,N_{\ge \omega}(G_n)\}$, the first statement
of the lemma follows.

For the second, we may of course assume that $\rho(\kf)>\eps$; otherwise,
there is nothing to prove.
As $\delta\to 0$, from Theorem~\ref{TappB}(i) we have
$\rho((1-\delta)\kf)\to \rho(\kf)$. Fix $\delta>0$
with $\rho((1-\delta)\kf)>\rho(\kf)-\eps/2$, and 
let $G_n'=G(n,(1-\delta)\kf)$.
Applying \eqref{nbig} to $G_n'$, there is some
$\omega=\omega(n)\to\infty$ such that
\[
 N=N_{\ge\omega}(G_n')= \rho((1-\delta)\kf)n+\opn \ge (\rho(\kf)-\eps/2)n+\opn,
\]
which implies \eqref{nbp}.
\end{proof}

In the light of Lemma~\ref{l_easypart},
and writing $G_n$ for $G(n,\kf)$, to prove Theorem~\ref{th1}
it suffices to show that
if $\kf$ is irreducible, then 
for any $\eps>0$ we have
\begin{equation}\label{low}
 C_1(G_n) \ge (\rho(\kf)-2\eps)n
\end{equation}
whp; then $C_1(G_n)/n\pto \rho(\kf)$ as required.
Also, from \eqref{nbig} and the fact that
$C_1(G_n)+C_2(G_n)\le \max\{2\omega,N_{\ge\omega}(G_n)+\omega\}$,
we obtain $C_2(G_n)=\op(n)$ as claimed.

Since $(1-\delta)\kf\le \kf$, there is a natural coupling of 
the graphs $G_n'$ and $G_n$ appearing in Lemma~\ref{l_easypart}
in which $G_n'\subseteq G_n$ always holds.
Our aim is to show that, whp, in passing from $G_n'$ to $G_n$, the extra `sprinkled'
edges join up almost all of the $N$ vertices of $G_n'$ in `large' components
(those of size at least $\omega$) into a single component.

Unfortunately, we have to uncover
the vertex types before sprinkling, so we do not have the usual
independence between the bulk and sprinkled edges. A similar problem
arose in Bollob\'as, Borgs, Chayes and Riordan~\cite{QRperc}
in the graph context, as opposed to the present hypergraph
context. It turns out that we can easily reduce to the graph
case, and thus apply a lemma from~\cite{QRperc}. This needs
a little setting up, however. Here it will be convenient
to take $\sss=[0,1]$ with $\mu$ Lebesgue measure;
as noted in Section~\ref{sec_ir}, this loses no generality.

Let $f$ be a bounded symmetric measurable function $f:[0,1]^2\to \RR$.
Following Frieze and Kannan~\cite{FKquick},
the {\em cut norm} $\cn{f}$ of $f$ is defined by
\[
 \cn{f} = \sup_{S,T\subseteq [0,1]} \left| \int_{S\times T} f(x,y)\dd x\dd y\right|,
\]
where the supremum is taken over all pairs of measurable sets.
Note that $\cn{f}\le\norm{f}_1$, since the integral
above is bounded by $\int_{S\times T}|f|\le \int_{[0,1]^2}|f|$.

Given a kernel $\ka$ on $[0,1]$ and a measurable
function $\varphi:[0,1]\to [0,1]$, let $\kaphi$ be the kernel defined by
\[
 \kaphi(x,y) = \ka(\varphi(x),\varphi(y)).
\]
If $\varphi$ is a measure-preserving bijection, then $\kaphi$ is a {\em rearrangement}
of $\ka$. (One can also consider measure-preserving bijections between
subsets of $[0,1]$ with full measure; it makes no difference.)
We write $\ka\sim\ka'$ if $\ka'$ is a rearrangement of $\ka$.

Given two kernels $\ka$, $\ka'$ on $[0,1]$,
the {\em cut metric} of
Borgs, Chayes, Lov\'asz, S\'os and Vesztergombi~\cite{BCLSV:1}
is defined by
\[
 \dcut(\ka,\ka') = \inf_{\ka''\sim \ka'} \cn{\ka-\ka''}.
\]
Note that this is a pseudo-metric rather than a metric, as we can
have $\dcut(\ka,\ka')=0$ for different kernels.
(Probabilistically, it is probably more natural to consider couplings
between kernels as in~\cite{BCLSV:1}, rather than rearrangements, but this
is harder to describe briefly and turns out to make no difference.)

Let $\Mn$ be a symmetric $n$-by-$n$ matrix with non-negative entries $\Mij$, which
we may think of as a (dense) weighted graph. There is a piecewise-constant kernel
$\ka_{\Mn}$ associated to $\Mn$; this simply takes the value $\Mij$
on the square $((i-1)/n,i/n]\times ((j-1)/n,j/n]$, $1\le i,j\le n$.
There is also a sparse random graph $G(\Mn)$ associated to $\Mn$;
this is the graph on $[n]$ in which edges are present independently,
and the probability that $ij$ is an edge is $\Mij/n$. (If $\Mn$ has
non-zero diagonal entries then $G(\Mn)$ may contain loops. These
are irrelevant here.)

The main result of Bollob\'as, Borgs, Chayes and Riordan~\cite{QRperc}
is that if $\ka$ is 
an irreducible bounded kernel
and $(\Mn)$ is a sequence of matrices with uniformly bounded
entries such that $\dcut(\ka_{\Mn},\ka)\to 0$, then the normalized
size of the giant component in $G(\Mn)$ converges in probability to $\rho(\ka)$.
The sprinkling argument there relies on the following lemma concerning
the graph $G(A_n)$, in which edges are present independently.

\begin{lemma}\label{l_BBCR}
Let $\ka$ be an irreducible bounded kernel on $[0,1]$,
and $\delta$ and $\betamax$ positive constants.
There is a constant $c=c(\ka,\betamax,\delta)>0$
such that whenever $\Mn$ is a sequence of symmetric matrices with entries in $[0,\betamax]$
with $\dcut(\ka_{\Mn},\ka)\to 0$, then for sufficiently large
$n$ we have
\[
 \Pr(V_n\sim_{G(\Mn)} V_n') \ge 1-\exp(-cn)
\]
for all disjoint $V_n$, $V_n'\subset [n]$ with $|V_n|,|V_n'|\ge \delta n$,
where $V_n\sim_{G(\Mn)} V_n'$ denotes the event that $G(\Mn)$ contains a path starting
in $V_n$ and ending in $V_n'$.\noproof
\end{lemma}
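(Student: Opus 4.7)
The plan is a BFS exposure on $G(\Mn)$, starting from $V_n$, driven by a spreading lemma that transfers irreducibility from $\ka$ to $\Mn$ via the cut-norm hypothesis. Roughly, the spreading lemma says any vertex subset of intermediate size has $\Omega(n^2)$ boundary edge-mass, so BFS cannot stall before reaching a $(1-\delta/3)$-fraction of $[n]$ and must therefore intersect $V_n'$.

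\emph{The spreading lemma.} For every $\eta > 0$ there is $c_0 = c_0(\ka,\eta) > 0$ such that $\int_{S \times ([0,1]\setminus S)} \ka \ge c_0$ for every measurable $S \subseteq [0,1]$ with $\eta \le \mu(S) \le 1-\eta$. I would prove this by an $L^\infty$ weak-$*$ compactness argument: a hypothetical sequence of counterexamples $(S_n)$ yields, after passing to a subsequence, a weak-$*$ limit $f:[0,1] \to [0,1]$ of the indicators $\mathbf{1}_{S_n}$ with $\int f \in [\eta, 1-\eta]$ and $\int f(x)(1-f(y))\ka(x,y) \dd x \dd y = 0$; rounding via a level set $A = \{f > t\}$ with a carefully chosen $t$ then produces a genuine indicator with $0 < \mu(A) < 1$ and $\int_{A \times A^c} \ka = 0$, contradicting irreducibility. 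The cut-norm hypothesis $\dcut(\ka_{\Mn}, \ka) \to 0$ then gives, for large $n$ and every $U \subseteq [n]$ with $|U| \in [\eta n, (1-\eta)n]$,
\begin{equation*}
 \sum_{i \in U,\, j \notin U} \Mij \ge (c_0/2) n^2.
\end{equation*}

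\emph{The BFS.} Set $\eta = \delta/3$ and run BFS on $G(\Mn)$ from $V_n$. Let $U_t$ be the exposed vertex set after round $t$; in round $t+1$ expose every still-unexamined potential edge from $U_t$ to $[n] \setminus U_t$. Whenever $|U_t| \in [\delta n/3, (1-\delta/3)n]$, the spreading bound provides total boundary probability mass $\ge (c_0/2) n$, the $O(n)$ edges consumed in previous rounds being negligible. Since each individual edge is present independently with probability $\le \betamax/n$, Bernstein's inequality gives at least $c_0 n/4$ present boundary edges in round $t+1$ except with probability $e^{-\Omega(n)}$. A Chernoff-plus-union-bound on the degree from $U_t$ into any single vertex $v \notin U_t$ shows this degree is at most $\betamax + O(\log n)$ for all $v$ whp, so by pigeonhole the number of new vertices is $\Omega(n/\log n)$. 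Hence within $O(\log n)$ rounds $|U_t|$ exceeds $(1-\delta/3)n$ and must intersect $V_n'$, which has size $\ge \delta n$. A union bound over rounds yields the claimed probability $1 - e^{-cn}$.

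The principal obstacle is the compactness-plus-rounding step in the spreading lemma: the weak-$*$ limit $f$ is generically fractional, and finding a level $t$ so that $\{f > t\}$ has intermediate measure \emph{and} the bilinear integral over $\{f > t\} \times \{f \le t\}$ still vanishes requires a Fubini/Chebyshev-type argument on the function $t \mapsto \int \mathbf{1}_{\{f > t\}}(x) \mathbf{1}_{\{f \le t\}}(y) \ka(x,y)\dd x \dd y$ rather than a one-line estimate. Once that is in hand, cut-norm transfer, Bernstein concentration and the degree bound are all routine.
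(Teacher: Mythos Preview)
The paper does not prove this lemma; it is quoted from \cite{QRperc} (implicit in Section~3 there) and \cite[Lemma~2.14]{cutbr}, so there is no ``paper's own proof'' to compare against. I will assess your attempt on its own.

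Your spreading lemma and its cut-norm transfer are correct, and the step you flag as the ``principal obstacle'' is easier than you fear. Once you have a weak-$*$ limit $f:[0,1]\to[0,1]$ with $\int f\in[\eta,1-\eta]$ and $\int f(x)(1-f(y))\ka(x,y)\,dx\,dy=0$, nonnegativity forces $\ka=0$ a.e.\ on $\{f>0\}\times\{f<1\}$. These two sets each have positive measure and, since $\{f=0\}$ and $\{f=1\}$ are disjoint, together cover $[0,1]$; a two-line case split on whether $\mu\{f>0\}<1$ or $\mu\{f<1\}<1$ then exhibits a genuine reducing set, with no level-set rounding required. (The passage to the bilinear limit under weak-$*$ convergence does need a word---approximate the bounded $\ka$ in $L^1$ by step functions, or split the difference and use dominated convergence---but this is routine.)

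The real gap is in the BFS. In round $t+1$ you expose the still-unexamined edges from $U_t$ to $[n]\setminus U_t$, but these are precisely the edges from the \emph{frontier} $U_t\setminus U_{t-1}$: every edge from $U_{t-1}$ to $[n]\setminus U_t$ was tested in an earlier round and found absent. Your spreading bound $(c_0/2)n$ controls the mass out of all of $U_t$, and the already-spent portion $\sum_{i\in U_{t-1},\,j\notin U_t}\Mij/n$ can be as large as $\betamax|U_{t-1}|\cdot|[n]\setminus U_t|/n=\Theta(n)$---the same order as the bound you want to use. Your phrase ``the $O(n)$ edges consumed'' seems to count \emph{present} edges (indeed $O(n)$ in total) rather than \emph{tested} potential edges (up to $\Theta(n^2)$, with probability mass $\Theta(n)$). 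Without a lower bound on the frontier mass, neither the Bernstein step nor the $\Omega(n/\log n)$-growth conclusion goes through beyond the first round. The proofs in the cited references avoid this by first passing to a step-kernel (regularity-type) approximation of $\ka$, so that a \emph{bounded} number of rounds suffices and each round can be run on a fresh sprinkled copy of the graph; a naive $O(\log n)$-round scheme runs into exactly the mass-depletion problem above.
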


In fact, this lemma is not stated explicitly in~\cite{QRperc},
but this is exactly the content of the end of Section 3 there;
for an explicit statement and proof of (a stronger version of)
this lemma see~\cite[Lemma 2.14]{cutbr}.

We shall apply Lemma~\ref{l_BBCR} to graphs $G(\Mn)$ corresponding
to (subgraphs of) $G(n,\delta\kf)$, where $\delta$ is as
in Lemma~\ref{l_easypart}. To achieve independence
between edges, we shall simply take only one edge from each hyperedge.
Unfortunately, the problem of conditioning on the $x_i$ still remains;
we shall return to this shortly.

\begin{num_definition}\label{Gnkdef}
Let $\kf$ be an integrable hyperkernel and let $H_n$ be the Poisson (multi-)hypergraph
corresponding to $G(n,\kf)$.
Given the sequence $\vx=(x_1,\ldots,x_n)$,
let $\Gt(n,\kf,\vx)$ be the random (multi-)graph formed
from $H_n$ by replacing each $r$-vertex hyperedge $E$ by a single edge,
chosen uniformly at random from the $\binom{r}{2}$ edges
corresponding to $E$.
\end{num_definition}
With $\vx$ fixed, the numbers of copies of each edge $E$ in $H_n$
are independent Poisson random variables. From basic properties of
Poisson processes, it follows that, with $\vx$ fixed, the number of copies 
of each edge $ij$ in $\Gt(n,\kf,\vx)$ are also independent Poisson
random variables. Our next aim is to calculate the edge probabilities
in $\Gt(n,\kf,\vx)$.

As usual, we write $a\fall{b}$ for the {\em falling factorial} $a(a-1)\cdots(a-b+1)$.
Given $x_1,\dots,x_n$ and distinct $i,j\in [n]$, for $r\ge 2$ let
\begin{equation*}
  \kaxrij=n^{-(r-2)}\sum\ka_r(x_i,x_j,x_{k_3},\dots,x_{k_r}),
\end{equation*}
where the sum runs over all $(n-2)\fall{r-2}$ sequences $k_3,\dots,k_r$ of
distinct indices in $[n]\setminus\set{i,j}$,
and let $\taux$ be the $n$-by-$n$ matrix with entries
\begin{equation}\label{thdef}
  \tauxij=2\sum_{r\ge2} \kaxrij
\end{equation}
for $i\ne j$ and $\tauxij=0$ if $i=j$.

With $\vx$ given, the expected number of $r$-vertex hyperedges in $H_n$ containing $ij$
is $r(r-1)\kaxrij/n$. Hence the expected number
of $ij$ edges in $\Gt(n,\kf,\vx)$ is exactly $\tauxij/n$.
Now $\tauxij$ clearly depends on $x_i$ and $x_j$. Unfortunately,
it also depends on all the other $x_k$. The next lemma
will show that the latter dependence can be neglected.

Set
\[
 \ka_\kx(x,y,*) = \int_{\sss^{\kx-2}}\ka_\kx(x,y,x_3,x_4,\ldots,x_\kx) \dd\mu(x_3)\cdots\dd\mu(x_\kx),
\]
and let $\tau$ be the `re-scaled' edge kernel defined by
\begin{equation}\label{tdef}
  \tau(x,y) = 2\sum_{\kx\ge 2} \ka_\kx(x,y,*).
\end{equation}
Comparing with the formula \eqref{ce} for $\kae(x,y)$,
note that we have divided each term in the sum in \eqref{ce}
by $\binom{\kx}{2}$, the number of edges in $K_\kx$. 
Note that
\begin{equation}\label{tkae}
  \tau(x,y)=0 \iff \kae(x,y)=0.
\end{equation}

Recall that $\kaxrij$ and $\tauxij$ depend on the random
sequence $\vx$. In the next lemma, the expectation
is over the random choice of $\vx$; no graphs appear
at this stage.
\begin{lemma} \label{LA} 
Let $\kf=(\ka_r)_{r\ge2}$ be an integrable hyperkernel.
Then 
\begin{equation}\label{lak}
\E\frac1{n^2}\sum_{i\neq j} |\kaxrij-\ka_r(x_i,x_j,*)|
=o(1)
\end{equation}
for every $r$, and
\begin{equation}\label{lat}
\E\frac1{n^2}\sum_{i\neq j} |\tauxij-\tau(x_i,x_j)|
=o(1).
\end{equation}
\end{lemma}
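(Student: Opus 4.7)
The plan is to establish (i) (for each fixed $r$) by a truncation-plus-variance argument, then to deduce (ii) from (i) by a dominated-convergence argument over the index $r$.

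For (i), fix $r\ge 2$ and for $M>0$ write $\ka_r^{(M)}=\ka_r\wedge M$; let $\kaxMrij$ and $\ka_r^{(M)}(x_i,x_j,*)$ denote the analogues of $\kaxrij$ and $\ka_r(x_i,x_j,*)$ formed from this truncated kernel. By the triangle inequality,
\[
|\kaxrij-\ka_r(x_i,x_j,*)|\le |\kaxrij-\kaxMrij|+|\kaxMrij-\ka_r^{(M)}(x_i,x_j,*)|+|\ka_r^{(M)}(x_i,x_j,*)-\ka_r(x_i,x_j,*)|.
\]
The first and third summands are handled by direct computation over the i.i.d.\ types: since $\ka_r-\ka_r^{(M)}\ge 0$, Fubini gives
\[
\E\frac1{n^2}\sum_{i\neq j}|\kaxrij-\kaxMrij|=(1+O(n\qw))\int(\ka_r-\ka_r^{(M)}),
\]
and a trivial bound applies to the third summand. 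Both tend to $0$ as $\Mtoo$ by monotone convergence, using $\int\ka_r<\infty$ (a consequence of integrability of $\kf$). The middle summand is the substantive part: conditioning on $(x_i,x_j)$, the quantity $\kaxMrij$ is a mild rescaling of a U-statistic of order $r-2$ in the remaining i.i.d.\ variables, with conditional mean $(1+O(n\qw))\ka_r^{(M)}(x_i,x_j,*)$ and kernel bounded by $M$. A standard U-statistic variance estimate yields
\[
\E\bigsqpar{\bigpar{\kaxMrij-\ka_r^{(M)}(x_i,x_j,*)}^2\bigm|x_i,x_j}=O_r(M^2/n),
\]
so by Cauchy--Schwarz the unconditional $L^1$-deviation is $O_r(M/\sqrt n)\to 0$ as $\ntoo$ for any fixed $M$. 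Given $\eps>0$, choose $M$ large enough to make the first and third contributions at most $\eps$, then let $\ntoo$; this proves \eqref{lak}.

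For (ii), the triangle inequality applied to \eqref{thdef} combined with Tonelli gives
\[
\E\frac1{n^2}\sum_{i\neq j}|\tauxij-\tau(x_i,x_j)|\le 2\sum_{r\ge 2}\E\frac1{n^2}\sum_{i\neq j}|\kaxrij-\ka_r(x_i,x_j,*)|.
\]
The $r$-th term vanishes as $\ntoo$ by (i), while being uniformly bounded in $n$ by
\[
2\E\frac1{n^2}\sum_{i\neq j}\bb{\kaxrij+\ka_r(x_i,x_j,*)}\le 4\int\ka_r,
\]
and $\sum_{r\ge 2}4\int\ka_r\le 2\,\ii{\kf}<\infty$, so dominated convergence for series yields \eqref{lat}.

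The main obstacle is the conditional variance estimate for the bounded case; once that is cleanly in hand, both the truncation step in (i) and the passage from (i) to (ii) are routine consequences of the integrability hypothesis $\ii{\kf}<\infty$.
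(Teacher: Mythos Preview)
Your proof is correct and follows essentially the same approach as the paper: truncate, bound the conditional variance of the bounded version by counting pairs of overlapping index sets (your ``U-statistic variance estimate'' is exactly the paper's computation $\E(Y_{ij}^2\mid x_i,x_j)=O(n^{-1})$), apply Cauchy--Schwarz, and then use integrability to control the truncation error. The only organizational difference is that the paper first proves the bounded case in full and then truncates, whereas you split via the triangle inequality first; for \eqref{lat} the paper truncates the whole family $\kfM$ at once while you sum \eqref{lak} over $r$ and invoke dominated convergence for series, but these are equivalent arguments.
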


\begin{proof}
  We have
\begin{equation*}
\E\bigpar{\kaxrij\mid x_i,x_j}
=(n-2)\fall{r-2}\,n^{-(r-2)}\ka_r(x_i,x_j,*).
\end{equation*}

Suppose first that $\ka_r$ is bounded.
Let
\begin{equation*}
  \begin{split}
  Y_{ij}
&=
\kaxrij-(n-2)\fall{r-2}\, n^{-(r-2)}\ka_r(x_i,x_j,*)
\\&
=n^{-(r-2)}
\sum(\ka_r(x_i,x_j,x_{k_3},\dots,x_{k_r})-\ka_r(x_i,x_j,*)),	
  \end{split}
\end{equation*}
where the sum again runs over all
$(n-2)\fall{r-2}$ sequences $k_3,\dots,k_r$ of
distinct indices in $[n]\setminus\set{i,j}$.
Given $x_i$ and $x_j$, each term in the sum has mean 0, and any two
terms with disjoint index sets \set{k_3,\dots,k_r} are independent.
Since there are $O(n^{2r-5})$ pairs of terms with overlapping 
index sets, and $\ka_r$ is bounded, we have
\begin{equation*}
  \E(Y_{ij}^2\mid x_i,x_j) =O(n^{2r-5-2(r-2)})=O(n^{-1}).
\end{equation*}
Thus $\E Y_{ij}^2 =O(n^{-1})$. Hence, by the 
Cauchy--Schwarz inequality, $\E|Y_{ij}|\le(\E Y_{ij}^2)^{1/2}=O(n^{-1/2})$,
and so
\begin{equation*}
\E\frac1{n^2}\sum_{i\neq j} |\kaxrij-\ka_r(x_i,x_j,*)|
=
\frac1{n^2}\sum_{i\neq j}\E |Y_{ij}|+O(1/n)
=O(n^ {-1/2}).
\end{equation*}
This proves \eqref{lak} and thus \eqref{lat} for bounded hyperkernels.

For general hyperkernels, we use truncation and define $\kaM_r$ by
\eqref{trunc}.
For the corresponding $\kaxMrij$, $\tauxM$ and $\tauM$,
\begin{gather*}
\E\frac1{n^2}\sum_{i\neq j} |\kaxrij-\kaxMrij|
\le\int(\ka_r-\kaM_r),
\\
\E\frac1{n^2}\sum_{i\neq j} |\ka_r(x_i,x_j,*)-\kaM_r(x_i,x_j,*)|  
\le\int(\ka_r-\kaM_r),
\intertext{and thus}
\E\frac1{n^2}\sum_{i\neq j} |\tauxij-\tauxMij|
\le2\sum_r\int(\ka_r-\kaM_r),
\\
\E\frac1{n^2}\sum_{i\neq j} |\tau(x_i,x_j)-\tauM(x_i,x_j)|  
\le2\sum_r\int(\ka_r-\kaM_r).
\end{gather*}
Since $(\ka_r)$ is integrable, given any $\eps>0$ we can make these
expected differences
less than $\eps$ by choosing $M$ large enough, and the result follows
from the bounded case. 
\end{proof}

With the preparation above we are now ready to prove Theorem~\ref{th1}.

\begin{proof}[Proof of Theorem~\ref{th1}]
We assume without loss of generality that $\sss=[0,1]$, with $\mu$
Lebesgue measure.

Let $\kfp=(\ka'_F)_{F\in \F}$ be an irreducible,
integrable kernel family,
let $\kf=(\ka_r)_{r\ge2}$ be the corresponding hyperkernel, given by \eqref{hk},
and let $\eps>0$.
As noted after \refL{l_easypart}, in the light of this lemma,
it suffices to prove the lower bound \eqref{low} on $C_1(G(n,\kf))$.
We may and shall assume that 
$\rho(\kf)>0$ and $\eps<\rho(\kf)/10$, say.

Let $\delta>0$ and $\omega=\omega(n)$
be as in \refL{l_easypart}, and let $H_n$, $H_n'$
and $\tH_n$ be the Poisson multi-hypergraphs associated
to the hyperkernels $\kf$, $(1-\delta)\kf$ and $\delta\kf$,
respectively.
Using the same vertex types $\vx=(x_1,\ldots,x_n)$ for all three hypergraphs,
there is a natural coupling in which $H_n = H_n'\cup \tH_n$,
with $H_n'$ and $\tH_n$ {\em conditionally} independent given $\vx$.

Define $\taux$ and $\tau$ by \eqref{thdef} and \eqref{tdef}, respectively,
starting from the integrable hyperkernel $\delta\kf$.
Note that $\tau$ is a kernel on $[0,1]$, while $\taux$ is
an $n$-by-$n$ matrix {\em that depends on $\vx$}.
Recall from \eqref{tkae} that $\tau(x,y)=0$ if and only if $\kae(x,y)=0$,
so $\tau$ is irreducible.
In order to be able to apply \refL{l_BBCR}, we would like
to work with a bounded kernel and matrices that are bounded uniformly
in $n$. We achieve this simply by considering $\Btaux=(\Btauxij)$ and $\Btau$
defined by
\[
 \Btauxij= \min\{\tauxij,1\} \quad\hbox{ and }\quad\Btau(x,y)=\min\{\tau(x,y),1\}.
\]

Let $\N$ be the (random)
`sampled' matrix corresponding to $\tau$, defined by
\[
 \Nij = \tau(x_i,x_j),
\]
and let $\BN$ be the corresponding matrix associated to $\Btau$.
The second statement of Lemma~\ref{LA} tells us exactly that 
\[
 \E\frac{1}{n^2} \sum_{i\ne j} |\tauxij-\Nij| =o(1),
\]
where the expectation is over the random choice of $\vx$.
Since $|\Btauxij-\BNij|\le |\tauxij-\Nij|$ for $i\ne j$,
while $|\Btauxii-\BNii|\le 1$, it follows that
\[
 \E\frac{1}{n^2} \sum_{i,j} |\Btauxij-\BNij| =o(1),
\]
or, equivalently, that $\E \norm{\ka_{\Btaux}-\ka_\BN}_1=o(1)$,
where we write $\ka_M$ for the piecewise constant kernel
$\ka_M:[0,1]^2\to\RR$ associated
to a matrix $M$.

Since $\dcut(\ka_1,\ka_2)\le \cn{\ka_1-\ka_2}\le \norm{\ka_1-\ka_2}_1$,
it follows that
$\E\dcut(\ka_{\Btaux},\ka_\BN)\to 0$, and hence that 
$\dcut(\ka_{\Btaux},\ka_\BN)\pto 0$.
Coupling the random sequences $\vx$ for different $n$ appropriately, we may
and shall assume that
\begin{equation}\label{c1}
  \dcut(\ka_{\Btaux},\ka_\BN)\to 0
\end{equation}
almost surely.

Since $\Btau$ is a bounded kernel on $[0,1]$, i.e.,
a `graphon' in the terminology of~\cite{BCLSV:1},
Theorem 4.7 of Borgs, Chayes, Lov\'asz, S\'os and
Vesztergombi~\cite{BCLSV:1} tells us that
with probability at least $1-e^{-n^2/(2\log_2 n)}=1-o(1)$,
we have $\dcut(\ka_\BN,\Btau)\le 10\sup\Btau/\sqrt{\log_2 n}=o(1)$.
It follows that $\dcut(\ka_\BN,\Btau)\to 0$ both in probability
and almost surely.
Using \eqref{c1}, we see that
\begin{equation}\label{c2}
  \dcut(\ka_{\Btaux}, \Btau) \to 0
\end{equation}
almost surely. Note that $\ka_{\Btaux}$ depends on the sequences $\vx$.

Let $G_n'$ and $G_n$ be the simple graphs underlying $H_n'$ and $H_n$.
From \refL{l_easypart}, \eqref{nbp} holds whp.
For the rest of the proof, {\em we condition on $\vx$ and on $H_n'$}.
We assume, as we may, that \eqref{nbp} holds
for all large enough $n$, and that \eqref{c2} holds.
It suffices to show that with {\em conditional} probability
$1-o(1)$ we have $C_1(G_n)\ge (\rho(\kf)-2\eps) n$.
Recall that, given $\vx$, the (multi-)hypergraphs $H_n'$ and $\tH_n$
are independent, so after our conditioning (on $\vx$ and $H_n'$),
the hypergraph $\tH_n$ is formed by selecting
each $r$-tuple $v_1,\ldots,v_r$ to be an edge independently,
with probability $\delta\ka_r(x_{v_1},\ldots,x_{v_r})/n^{r-1}$.

Let $\Gt_n=\Gt(n,\delta\kf,\vx)$ be the random (multi-)graph
defined from $\tH_n$ by taking one edge from each hyperedge as
in Definition~\ref{Gnkdef},
noting that $G_n'\cup \Gt_n\subseteq G_n$.
Since we have conditioned on $\vx$ (and $G_n'$),
as noted after Definition~\ref{Gnkdef}, each possible edge
$ij$ is present in $\Gt_n$ independently. In the multi-graph
version, the number of copies of $ij$ is Poisson with mean
$\tauxij/n$.
Passing to a subgraph, we shall take instead
the number of copies to be Poisson with mean $\Btauxij/n$.
Since this mean is $O(1/n)$, the probability that one
or more copies of $ij$ is present is $\tauxpij/n$, where
$\tauxpij=\Btauxij+O(1/n)$.
Since $\dcut(\ka_{\tauxp},\ka_{\Btaux})=O(1/n)=o(1)$,
we have $\dcut(\ka_{\tauxp},\Btau)\to 0$.
Since $\Btau$ is an irreducible bounded kernel,
the (simple graphs underlying) $\Gt_n$ satisfy the assumptions
of \refL{l_BBCR}, so there is a constant $c>0$
such that for any two set $V_n$, $V_n'$ of at least $\eps n/2$
vertices of $\Gt_n$, the probability
that $V_n$ and $V_n'$ are {\em not} joined by a path
in $\Gt_n$ is at most $e^{-cn}$.

Recall that we have conditioned on $G_n'$, assuming \eqref{nbp}.
Suppose also that $C_1(G_n)\le (\rho(\kf)-2\eps)n$.
Then there is a partition $(V_1,V_2)$ of the set of vertices of $G_n'$
in large components in $G_n'$ with $|V_1|,|V_2|\ge \eps n$ such that
there is no path in $G_n$ from $V_1$ to $V_2$.
Let us call such partition $(V_1,V_2)$ a {\em bad partition}.
Having conditioned on $G_n'$, noting that in any potential bad partition $V_1$
must be a union of large components of $G_n'$,
the number of possible choices
for $(V_1,V_2)$ is at most $2^{n/\omega}=e^{o(n)}$.
On the other hand, since $\Gt_n\subseteq G_n$, the probability that
any given partition is bad is at most $e^{-cn}$, so the expected number
of bad partitions is $o(1)$, and whp there is no bad partition.
Thus $C_1(G_n) > (\rho(\kf)-2\eps)n$ holds whp, as required.
\end{proof}

\begin{remark}\label{R_red}
The restriction to irreducible kernel families in \refT{th1} is of course
necessary; roughly speaking, if $\kf$ is reducible, then our graph $G(n,\kf)$
falls into two or more parts. \refL{l_easypart} still applies to show that
we have $\rho(\kf)n+\op(n)$ vertices in large components,
but it may be that two or more parts have giant components,
each of smaller order than $\rho(\kf)n$.

More precisely, let $\kf$ be a reducible, integrable kernel family.
Thus the edge kernel $\kae$ is reducible. By Lemma 5.17 of~\cite{BJR},
there is a partition $\sss=\bigcup_{i=0}^N \sss_i$, $N\le \infty$,
of our ground space $\sss$ (usually $[0,1]$) such that
each $\sss_i$ is measurable, the restriction of $\kae$ to $\sss_i$
is irreducible (in the natural sense), and, apart from a measure zero
set, $\kae$ is zero off $\bigcup_{i=1}^N \sss_i\times \sss_i$.

Suppressing the dependence on $n$, let $G_i$ be the subgraph
of $G(n,\kf)$ induced by the vertices with types in $\sss_i$.
Since the vertex types are \iid, the probability that $G(n,\kf)$ contains
any edges other than those of $\bigcup_{i\ge 1}G_i$ is 0.
Now $G_i$ has a random number $n_i$ of vertices,
with a binomial $\Bi(n,\mu(\sss_i))$ distribution, which is concentrated around its mean.
Given $n_i$, the graph $G_i$ is another instance of our model.

Let $a_i=\int_{\sss_i} \rho_\kf(x)\dd\mu(x)$, so that
$\sum_i a_i=\rho(\kf)<1$.
From the remarks above it is easy to check that \refT{th1} gives
$C_1(G_i)/n\pto a_i$ and $C_2(G_i)=\op(n)$; we omit the details.
Sorting the $a_i$ into decreasing order
$\ha_1,\ha_2,\ldots$, it follows that
$C_i(G(n,\kf))=\ha_i n+\op(n)$ for each fixed (finite) $1\le i\le N$,
in particular, for $i=1$ and $i=2$.
\end{remark}

\section{Disconnected atoms and percolation}\label{Sdisconnected}

One of the most studied features of the various inhomogeneous network models
is their `robustness' under random failures, and in particular, the critical
point for site or bond percolation on these random graphs.
For example, this property of the Barab\'asi--Albert~\cite{BAsc} model
was studied experimentally by Barab\'asi, Albert and Jeong~\cite{AJBattack},
heuristically by
Callaway, Newman, Strogatz and Watts~\cite{CNSW} (see also~\cite{BAsurv}) and
Cohen, Erez, ben-Avraham and Havlin~\cite{CohenRobust},
and rigorously in~\cite{robust,Rsmall}.
In the present context, given $0<p<1$, we would like to study
the random subgraphs $\Gp(n,\kf)$ and $\Gpp(n,\kf)$ of $G(n,\kf)$ obtained
by deleting edges or vertices respectively, keeping each edge
or vertex with probability $p$, independently of the others. In the
edge-only model of~\cite{BJR}, these graphs were essentially equivalent
to other instances of the same model:  roughly speaking, $\Gp(n,\ka) \isom G(n,p\ka)$
and $\Gpp(n,\ka)\isom G(pn,p\ka)$. (For precise statements, see~\cite[Section 4]{BJR}.)

Here, the situation is a little more complex. When we delete edges randomly from $G(n,\kf)$,
it may be that what is left of a particular atom $F$ is disconnected. This forces
us to consider {\em generalized kernel families} $(\ka_F)_{F\in\G}$ with one kernel
$\ka_F$ for each $F\in \G$, where the set $\G$ consists of one representative
of each isomorphism class of finite (not necessarily connected) graphs.

Rather than present a formal statement, let us consider a particular example.
Suppose that $\kf$ is the generalized kernel family with only one kernel
$\ka_F$, corresponding to the disjoint union $F$ of $K_3$ and $K_2$.
Let $\kf'$ be the kernel family with two kernels,
\[
 \ka_3(x,y,z) = \int_{\sss^2} \ka_F(x,y,z,u,v)\dd\mu(u)\dd\mu(v),
\]
corresponding to $K_3$ and 
\[
 \ka_2(u,v) = \int_{\sss^3} \ka_F(x,y,z,u,v)\dd\mu(x)\dd\mu(y)\dd\mu(z)
\]
for $K_2$. Then $G(n,\kf)$ and $G(n,\kf')$ are clearly very similar;
the main differences are that $G(n,\kf)$ contains exactly the same number
of added triangles and $K_2$s, whereas in $G(n,\kf')$ the numbers
are only asymptotically equal, and that in $G(n,\kf)$ a triangle
and a $K_2$ added in one step are necessarily disjoint.
Since almost all pairs of triangles and $K_2$s in $G(n,\kf')$ are disjoint anyway,
it is not hard to check that $G(n,\kf)$ and $G(n,\kf')$ are `locally equivalent',
in that the neighbourhoods of a random vertex
in the two graphs can be coupled to agree up to a fixed size whp.

More generally, given a generalized kernel family $\kf=(\ka_F)_{F\in\G}$,
let $\kf'$ be the kernel family obtained by replacing each kernel
$\ka_F$ by one kernel for each component $F'$ of $F$, obtained
by integrating over variables corresponding to vertices of $F\setminus F'$
as above. This may produce several new kernels for a given connected $F'$;
we of course simply add these together to produce a single kernel $\ka'_{F'}$.
Note that
\[
 \sum_{F'} |F'|\int_{\sss^{|F'|}} \ka'_{F'} = \sum_F |F| \int_{\sss^{|F|}} \ka_F,
\]
so if $\kf$ is integrable, then so is $\kf'$.
Although $G(n,\kf)$ and $G(n,\kf')$ are not exactly equivalent, the truncation
and local approximation arguments used to prove \refT{th1} carry over easily
to give the following result.
\begin{theorem}\label{th_disc}
Let $\kf=(\ka_F)_{F\in\G}$ be a generalized kernel family,
let $\kf'$ be the corresponding kernel family as defined above,
and let $\kf''=\bkf'$ be the hyperkernel corresponding to $\kf'$,
defined by \eqref{hk}.
If $\kf'$ is irreducible, then
\begin{equation*}
 C_1(G(n,\kf))=\rho(\kf'')n + \op(n),
\end{equation*}
and $C_2(G(n,\kf))=\op(n)$.\noproof
\end{theorem}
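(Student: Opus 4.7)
The plan is to reduce to \refT{th1} by coupling $G(n,\kf)$ with $G(n,\kfp)$ and showing that the two random graphs are essentially indistinguishable for the purposes of counting large components. Since $\kfp$ is irreducible and integrable, \refT{th1} already yields $C_1(G(n,\kfp)) = \rho(\kf'')n + \op(n)$ and $C_2(G(n,\kfp)) = \op(n)$, so the entire task is to transfer these two bounds to $G(n,\kf)$.

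As a first step, I would reduce to bounded generalized kernel families via a truncation argument analogous to the one in \refL{nkint}: set $\ka_F^{(M)} = \ka_F\wedge M$ for $|F|\le M$ and zero otherwise, and use integrability to ensure that for any $\eps>0$ the expected number of edges in the symmetric difference of $G(n,\kf)$ and $G(n,\kfM)$ is at most $\eps n$ when $M$ is large. The same holds for the connected version $\kfp$. Since modifying $O(\eps n)$ edges changes $C_1$ and $C_2$ by $O(\eps n)$, and the construction $\kf\mapsto\kfp$ commutes with truncation up to the same error, it suffices to handle bounded $\kf$. For bounded $\kf$, I would then build an explicit coupling of $G(n,\kf)$ and $G(n,\kfp)$: every atom $F\in \G$ added to $G(n,\kf)$ on an ordered tuple $(v_1,\dots,v_{|F|})$ contributes exactly the edges of its components $F_1,\dots,F_k$ placed on the corresponding disjoint sub-tuples, so one can attempt to match each such component-placement in $G(n,\kf)$ with an independent component-placement in $G(n,\kfp)$. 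By the definition of $\ka'_{F_i}$ as the marginal of $\ka_F$ obtained by integrating out the coordinates outside $F_i$, the expected number of copies of a component $F_i$ placed on a given sub-tuple agrees between the two models up to an $O(1/n)$ relative error arising from the distinct-vertex constraint. A standard second-moment calculation, directly analogous to the counting in \refS{sec_loc}, shows that the number of vertices at which the two models differ within any fixed neighbourhood depth is $\op(n)$.

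Granted this coupling, the proof of \refL{nkint} carries over verbatim to give $N_{\ge k}(G(n,\kf))/n \pto \rhogek(\kf'')$ for every fixed $k$, which in turn yields the upper bound $C_1(G(n,\kf)) \le \rho(\kf'')n + \op(n)$ via \refL{l_easypart}. The matching lower bound comes from rerunning the sprinkling argument of \refS{sec_giant}: split $\kf = (1-\delta)\kf + \delta\kf$, extract a single random edge from each component of each sprinkled atom to obtain an auxiliary simple graph whose effective edge kernel is (after truncation) the irreducible bounded kernel $\delta\kaep$, and apply \refL{l_BBCR} to connect the large components left by the bulk part $G(n,(1-\delta)\kf)$. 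The main obstacle throughout is the coupling step: disconnected atoms introduce genuine correlations between the joint placements of their components which are absent in $G(n,\kfp)$, and one has to verify that these correlations are invisible at the level of local neighbourhoods, of the cut-norm estimates feeding \refL{l_BBCR}, and of the sprinkling conclusion. This is morally straightforward, reducing to the observation that for bounded $\kf$ the expected number of vertices whose $d$-neighbourhood in $G(n,\kf)$ is affected by a correlation between two components of a common parent atom is $o(n)$ for each fixed $d$, but it requires writing out the relevant moment bounds carefully.
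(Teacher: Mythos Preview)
Your overall approach matches what the paper sketches: the paper declares the result without a formal proof, stating only that ``the truncation and local approximation arguments used to prove \refT{th1} carry over easily.'' Your plan of truncating to a bounded generalized kernel family, establishing a local coupling between $G(n,\kf)$ and $G(n,\kfp)$ in the bounded case, and then rerunning the sprinkling argument is exactly this.

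One intermediate claim is wrong as stated, though not fatal: the assertion that ``modifying $O(\eps n)$ edges changes $C_1$ and $C_2$ by $O(\eps n)$'' is false --- a single edge can merge two components of order $n/2$, changing $C_1$ by order $n$. The correct route, which you in fact take in the next paragraph, is to work with $N_{\ge k}$ for fixed $k$: adding or removing one edge changes $N_{\ge k}$ by at most $2k$, so the truncation argument from the proof of \refL{nkint} goes through, and then \refL{l_easypart} converts the $N_{\ge k}$ conclusion into the upper bound on $C_1$. Drop the direct reduction of $C_1,C_2$ to the bounded case and route everything through $N_{\ge k}$ and the sprinkling argument, as the paper does for \refT{th1}. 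Relatedly, controlling the number of \emph{edges} in the symmetric difference would require edge integrability; with only integrability you control the sum of atom sizes, which (after replacing each atom by a spanning tree for component-structure purposes, exactly as in the proof of \refL{nkint}) suffices for the $N_{\ge k}$ estimate.
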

Note that the hyperkernel $\kf''$ corresponding to $\kf'$ is obtained by replacing
each (now connected, as before) atom $F'$ by a clique; this corresponds
to replacing each {\em component} of an atom $F$ in $G(n,\kf)$ by a clique.

Turning to bond percolation on $G(n,\kf)$, i.e., to the study of the random
subgraph $\Gp(n,\kf)$ of $G(n,\kf)$,
let $\kfq$ be the kernel family obtained by replacing each kernel $\ka_F$
by $2^{e(F)}$ kernels $\ka_{F'} =p^{e(F')}(1-p)^{e(F)-e(F')} \ka_F$,
one for each spanning subgraph of $F$. (As before, we then
combine kernels corresponding to isomorphic graphs $F'$.)
Working work with the
Poisson multigraph formulation of our model,
the graphs $\Gp(n,\kf)$ and $G(n,\kfq)$ have exactly the same distribution.
This observation
and \refT{th_disc} allow us (in principle, at least) to decide
whether $\Gp(n,\kf)$ has a giant component, i.e., to find the critical point
for bond percolation on $G(n,\kf)$. 

Let us illustrate this with the very simple special case in which each
kernel $\ka_F$, $F\in\G$, is constant, say $\ka_F=c_F$. We assume that 
$\kf$ is integrable, i.e., that $\sum_F |F|c_F<\infty$.
In this case each kernel $\kap_F$ making up $\kfq$ is also constant, and the same
applies to the hyperkernel $\kf''$ corresponding to $\kfq$.
Hence, from the remarks above and \eqref{unif},
$\Gp(n,\kf)$ has a giant component if and only if the asymptotic edge density
$\xie(\kf'')$ of the hyperkernel $\kf''$ is at least $1/2$.
Since we obtain $\kf''$
by first taking random subgraphs of our original atoms $F$,
and then replacing each component by a clique, we see that
\[
 \xie(\kf'') = \sum_{F\in\F} c_F \theta_F(p),
\]
where $\theta_F(p)$ is the expected number of unordered pairs of distinct
vertices of $F$ that lie in the same component of the random subgraph $\Fp$
of $F$ obtained by keeping each edge with probability $p$, independently of the others.
Alternatively,
\[
 2\xie(\kf'') = \sum_{F\in \F} c_F |F| (\chi(\Fp)-1),
\]
where $\chi(\Fp)$ is the {\em susceptibility} of $\Fp$, i.e., the expected size
of the component of a random vertex of $\Fp$.
If we have only a finite number of non-zero $c_F$, then $\xie(\kf'')$
may be evaluated as a polynomial
in $p$, and the critical point found exactly.

Turning to site percolation, there is a similar reduction to another instance of our model,
most easily described by modifying the type space. Indeed, we add a new type $\star$
corresponding to deleted vertices, and set $\mu'(\star)=1-p$. Setting $\mu'(A)=p\mu(A)$
for $A\subset \sss$, we obtain a probability measure $\mu'$ on $\sss'=\sss\cup\{\star\}$.
Replacing each kernel $\ka_F$ by $2^{|F|}$ kernels $\ka_{F'}$ on $\sss'$ defined
appropriately (with $F'$ corresponding to the subgraph of $F$ spanned by the non-deleted vertices),
one can show that $\Gpp(n,\kf)$ is very close to
(in the Poisson version, identical to)
a suitable instance $G(n',\kf')$
of our model, where $n'$ is now random but concentrated around its mean $pn$.
In the first instance $\kf'$ may include kernels for disconnected graphs,
but as above we can find an asymptotically
equivalent kernel family involving only connected graphs.
In this way one can find the asymptotic size of any giant component in $\Gpp(n,\kf)$; we omit the
mathematically straightforward but notationally complex details.

\section{Vertex degrees}\label{sec_degrees}

Heuristically, the vertex degrees in $\gnk$ can be described as follows.
Consider a vertex $v$ and condition on its type $x_v$. 
The number of atoms that contain $v$ then is
asymptotically Poisson with a certain mean depending on $\kf$ and $x_v$.
However, each atom may add several edges to the vertex $v$, and
thus the asymptotic distribution of the vertex degree is compound
Poisson (see below for a definition).
Moreover, this compound Poisson distribution typically depends on the
type $x_v$, so the final result is that, asymptotically, the vertex
degrees have a mixed compound Poisson distribution. In this
section we shall make this precise and rigorous.

We begin with some definitions. 
If $\ll$ is a finite measure on $\bbN$, then $\cpol$, the
\emph{compound Poisson distribution with intensity $\ll$}, is defined
as the distribution of $\sumj jX_j$, where $X_j\sim\Po(\ll\set j)$ are
independent Poisson random variables.
Equivalently, $\cpol$ is the distribution of the sum $\sum_\nu\xi_\nu$
of the points of a Poisson process \set{\xi_\nu} on $\bbN$ with intensity $\ll$,
regarded as a multiset.
(The latter definition generalizes
to arbitrary measures $\ll$ on $(0,\infty)$ such that
$\int_0^\infty t\wedge 1\dd\ll(t)<\infty$, but we consider in this
paper only the integer case.)
Since $X_j$ has \pgf{} 
$\E z^{X_j}=e^{\ll\set j (z-1)}$,
$\cpol$ has \pgf
\begin{equation*}
  \gfcpol(z)
=
\E z^{\sumj j X_j}
=
\E \prodj z^{j X_j}
=
\prodj e^{\ll\set j (z^j-1)}
=
 e^{\sumj\ll\set j (z^j-1)},
\end{equation*}
whenever this is defined, which it certainly is for $|z|\le1$.

If $\La$ is a random finite measure on $\bbN$,
then $\mcpol$ denotes the
corresponding \emph{mixed compound Poisson distribution}.
From now on, for each $x\in\sss$, $\ll_x$ will be a finite measure on $\bbN$, 
depending measurably on $x$. We shall write $\La$ for the corresponding
random measure on $\bbN$, obtained by choosing $x$ from $\sss$
according to the distribution $\mu$ and then taking $\la_x$.
Thus $\mcpol$ is
defined by the point probabilities
\begin{equation*}
\mcpol\set i
=\ints\cpolx\set i\dd\mu(x)
\end{equation*}
or, equivalently, the \pgf
\begin{equation*}
  \gfmcpol(z)
=
\ints\gfcpolx(z)\dd\mu(x)
=
\ints e^{\sumj\ll_x\set j (z^j-1)} \dd\mu(x).
\end{equation*}

\begin{remark}\label{Rcpo}
Since we have assumed that $\ll$ is a finite measure,
$\E\sum_j X_j=\ll(\bbN)<\infty$; thus a.s.\ $\sum_jX_j<\infty$ and
only finitely many $X_j$ are non-zero, whence $\sum_jjX_j<\infty$ a.s.
This verifies that $\cpol$ is a proper probability distribution.
On the other hand, the mean of $\cpol$ is
\begin{equation*}
\E\cpol
=\sum_jj\E X_j
=\sum_jj\ll\set j
=\int_0^\infty t\dd\ll(t),  
\end{equation*}
which may be infinite.
As a consequence,
\begin{equation}\label{rcpo}
\E\mcpol
=\int_\sss\int_0^\infty t\dd\ll_x(t)\dd\mu(x)
\le\infty.  
\end{equation}
\end{remark}

Let $\dtv$ denote the total variation distance between two random
variables, or rather their probability distributions, defined by
\begin{equation}\label{dtv}
  \dtv(X,Y)=\sup_A|\Pr(X\in A)-\Pr(Y\in A)|,
\end{equation}
where the supremum is taken over all measurable sets $A\subseteq\bbR$.
We shall use the following trivial upper bound on the
total variation distance between two compound Poisson distributions.
\begin{lemma}
  \label{LB}
If $\ll$ and $\ll'$ are two finite measures on $\bbN$, then
\begin{equation*}
\dtv\bigpar{\CPo(\ll),\CPo(\ll')}
\le\|\ll-\ll'\|
=\sum_j|\ll\set j  -\ll'\set j|.
\end{equation*}
\end{lemma}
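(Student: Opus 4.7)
The plan is to prove this by an explicit coupling, using the standard decomposition of Poisson variables into a common part and excess parts. Throughout I will use the representation of $\CPo(\ll)$ as $\sum_j j X_j$ with independent $X_j \sim \Po(\ll\{j\})$, and the elementary fact that $\dtv(X,Y) \le \Pr(X\ne Y)$ under any coupling.

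First I would set $\mu_j = \ll\{j\}\wedge\ll'\{j\}$ and $\alpha_j = \ll\{j\}-\mu_j\ge 0$, $\alpha_j' = \ll'\{j\}-\mu_j\ge 0$, so that $\ll\{j\}=\mu_j+\alpha_j$, $\ll'\{j\}=\mu_j+\alpha_j'$, and exactly one of $\alpha_j,\alpha_j'$ is zero for each $j$. Then take mutually independent Poisson variables $Y_j\sim\Po(\mu_j)$, $Z_j\sim\Po(\alpha_j)$, $Z_j'\sim\Po(\alpha_j')$, and define $X_j = Y_j + Z_j$ and $X_j' = Y_j + Z_j'$. By the superposition property of independent Poissons, $X_j\sim\Po(\ll\{j\})$ and $X_j'\sim\Po(\ll'\{j\})$, and different $j$ remain independent, so $S = \sum_j j X_j \sim \CPo(\ll)$ and $S' = \sum_j j X_j' \sim \CPo(\ll')$ in this coupling.

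Next I would observe that $S = S'$ whenever $Z_j = Z_j' = 0$ for all $j$, i.e.\ whenever the independent Poisson variables $W = \sum_j Z_j \sim \Po(\sum_j\alpha_j)$ and $W' = \sum_j Z_j' \sim \Po(\sum_j\alpha_j')$ both vanish. Hence
\begin{equation*}
\Pr(S\ne S') \le \Pr(W+W'>0) = 1 - \exp\Bigpar{-\sum_j(\alpha_j+\alpha_j')}.
\end{equation*}
Since $\alpha_j + \alpha_j' = |\ll\{j\}-\ll'\{j\}|$ for each $j$, the sum equals $\|\ll-\ll'\|$, and applying the elementary inequality $1-e^{-t}\le t$ gives $\Pr(S\ne S')\le\|\ll-\ll'\|$. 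The conclusion follows from $\dtv\bigpar{\CPo(\ll),\CPo(\ll')} \le \Pr(S\ne S')$.

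There is no real obstacle; the only point to be slightly careful about is that when $\ll$ or $\ll'$ has infinitely many atoms one must check that the countable family of independent Poissons can be constructed and the sums $W, W'$ are a.s.\ finite. This is immediate since $\sum_j(\alpha_j+\alpha_j')\le\ll(\bbN)+\ll'(\bbN)<\infty$ by the finiteness assumption on $\ll$ and $\ll'$, so $W,W'$ are genuine Poisson variables with finite means and the argument goes through verbatim.
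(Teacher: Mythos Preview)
Your proof is correct and follows essentially the same approach as the paper: both construct a coupling of the families $(X_j)$ and $(X_j')$ so that $X_j\neq X_j'$ with probability at most $|\ll\{j\}-\ll'\{j\}|$, then bound $\Pr(S\neq S')$ by the sum of these. The paper states the per-coordinate coupling abstractly and applies a union bound; your decomposition $X_j=Y_j+Z_j$, $X_j'=Y_j+Z_j'$ is precisely the standard explicit realization of that coupling, and your route through $1-e^{-t}\le t$ is a mild sharpening of the union bound that lands in the same place.
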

\begin{proof}
  Let $X_j\sim\Po(\ll\set j)$ be as above and let
  $X'_j\sim\Po(\ll'\set j)$ be another family of independent Poisson
  variables.
We can easily couple the families so that
$\Pr(X_j\neq  X'_j) \le |\ll\set j-\ll'\set j|$ for every $j$.
                                   
Then
\begin{align*}
\dtv\bigpar{\CPo(\ll),\CPo(\ll')}
&\le\Pr\Bigpar{\sum_j jX_j\neq\sum_jjX'_j}
\le\sum_j\Pr(X_j\neq X'_j)
\\&
=\sum_j|\ll\set j  -\ll'\set j|.	
\qedhere
\end{align*}
\end{proof}

Given an integrable kernel family $\kf$
and $x\in\sss$, $F\in\F$ and $j\in V(F)=[\kF]$,
let
\begin{equation}\label{v2}
  \lxfj
=\int_{\sss^{\kF-1}} \ka_F(x_1,\dots,x_{j-1},x,x_{j+1},\dots,x_{\kF})
\dd \mu(x_1)\dotsm\dd \mu(x_{j-1})\dd\mu(x_{j+1})\dotsm\dd\mu(x_{\kF})
\end{equation}
be the (asymptotic) expected number of added copies of $F$ containing a
given vertex of type $x$ in which the given vertex corresponds to
vertex $j$ in $F$.
Let $\dfj$ be the degree of vertex $j$ in $F$, and define the measure
\begin{equation}\label{lax}
  \ll_x
=
\sum_{F\in\F}\sumjf\lxfj\delta_\dfj,
\end{equation}
where, as usual, $\delta_d$ denotes the probability measure assigning mass 1 to $d$.
Thus $\ll_x$ is a measure on $\bbN$, with point masses
given by
\begin{equation}\label{lxd}
  \lxd
=
\sum_{F\in\F}\sum_{j:\dfj=d}\lxfj,
\end{equation}
the (asymptotic) expected number of atoms containing a given
vertex of type $x$ and having degree $d$ there.
From \eqref{v2}, 
$\int_\sss\lxfj\dd\mu(x)=\int_{\sss^{\kF}}\ka_F$, and thus by \eqref{lax}
\begin{equation*}
\int_\sss\|\ll_x\|\dd\mu(x)
=\sumfj\int_\sss\lxfj \dd\mu(x)
=\sum_F\kF\int_{\sss^{\kF}}\ka_F<\infty.
\end{equation*}
Consequently, $\ll_x$ is a finite measure on $\bbN$ for a.e.\ $x$, and
the mixed compound Poisson distribution $\mcpol$ is defined.

Let the random variable $D=\dn$ be the degree of any fixed vertex
in \gnk. Equivalently, by symmetry, we can take $\dn$ to be the degree of
a uniformly random vertex. Furthermore, for $\ell\ge0$, let $\nl$ be the number of
vertices with degree $\ell$ in \gnk. Then the random
sequence $(\nl/n)_{\ell=0}^\infty$ can be regarded as a (random)
  probability distribution, viz., the conditional distribution of the
  degree of a random vertex in \gnk, given this random graph.
Note that $\Pr(\dn=\ell)=\E\nl/n$.

\begin{theorem}\label{Tdegree}
  Suppose that $\kf=(\ka_F)_{F\in\F}$ is an integrable kernel family.
Then, as \ntoo,
  \begin{romenumerate}
\item\label{tda}
$\dn\dto\mcpol$, and
\item\label{tdc}
$\displaystyle
\E\dn\to\E\mcpol
=\sumF 2e(F)\int_{\sss^{\kF}}\ka_F
=2\xiek
\le\infty
.
$
\item	\label{tdb}
Moreover, for every fixed $\ell$,
\begin{equation}\label{nl}
  \nl=\mcpol\set\ell n+\op(n)
\end{equation}
and thus
$(\nl/n)_{\ell=0}^\infty\dto\mcpol$ in the space of probability
measures on $\bbN$.
  \end{romenumerate}
\end{theorem}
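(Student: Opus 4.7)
The plan is to establish parts (i) and (ii) first for bounded kernel families via Poisson approximation of the family of atom counts at a fixed vertex, then transfer to arbitrary integrable $\kf$ by truncation using $\kfM$ from \eqref{trunc}, and finally deduce the concentration in (iii) by carrying out the Poisson approximation simultaneously at two vertices.

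For bounded $\kf$, I work in the Poisson multi-graph form, fix a vertex $v$, and condition on its type $x_v=x$. For each pair $(F,j)$ with $\ka_F$ non-zero and $j\in V(F)$ (only finitely many such pairs), let $\nfj$ denote the number of added copies of $F$ in which $v$ plays the role of vertex~$j$. Since the numbers of copies of different atoms on different ordered tuples are independent Poissons, $\nfj$ is itself Poisson with mean converging to $\lxfj$; the family $(\nfj)_{F,j}$ is jointly independent because the underlying tuples for different pairs are disjoint, so $(\nfj)_{F,j}\dto(\Po(\lxfj))_{F,j}$ jointly. The degree of $v$ in the underlying simple graph equals $\sum_{F,j}\dfj\nfj-\xi_n$, where $\xi_n\ge 0$ counts multiplicities arising from distinct atoms sharing an edge at $v$ and from multiple copies of the same atom; a short expectation bound shows $\E(\xi_n\mid x_v)=O(n^{-1})$ for bounded $\kf$, so $\xi_n\pto 0$. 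The continuous mapping theorem yields $\dn\mid\set{x_v=x}\dto\cpolx$, and integrating over $x_v\sim\mu$ gives $\dn\dto\mcpol$, which is part~(i) for bounded $\kf$. Part~(ii) in this case follows from the identity $\E\dn=2\E e(\gnk)/n$, \refT{Tedges}, and the computation $\E\mcpol=\sumF 2e(F)\int\ka_F=2\xiek$ obtained from \eqref{lxd} and Fubini.

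For the truncation step, couple $G(n,\kfM)\subseteq G(n,\kf)$ via the common Poisson construction, and write $\dn\M$ for the degree of $v$ in $G(n,\kfM)$. Conditionally on $x_v$, the probability that $\dn\M\neq\dn$ is bounded by the expected number of atoms of $\kf-\kfM$ containing $v$; this quantity tends as $\ntoo$ to a bound whose integral over $x_v$ is $\sumF\kF\int_{\sss^{\kF}}(\ka_F-\ka_F^M)$, which vanishes as $\Mtoo$ by integrability of $\kf$, uniformly in $n$. Combined with the bounded case applied to $\kfM$, and with the convergence $\mcpolm\dto\mcpol$ (which follows from \refL{LB} using $\ll_x\M\upto\ll_x$ in total variation for \aex{} $x$), a triangle-inequality argument gives $\dn\dto\mcpol$ for arbitrary integrable $\kf$. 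Part~(ii) extends by the same computation $\E\dn=2\E e(\gnk)/n\to 2\xiek\le\infty$.

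For (iii), exchangeability gives $\E\nl=n\Pr(\dn=\ell)\to\mcpol\set\ell\,n$ by part~(i). With $Y_v=\ett{\deg(v)=\ell}$, the decomposition
\begin{equation*}
\Var(\nl)\le n+n(n-1)\bigabs{\Pr(Y_1=Y_2=1)-\Pr(Y_1=1)^2}
\end{equation*}
reduces the concentration \eqref{nl} to showing $\Pr(Y_1=Y_2=1)-\Pr(Y_1=1)^2\to 0$. This is done by extending the Poisson approximation to a pair of vertices: conditionally on $(x_1,x_2)$, partition the atoms hitting $\set{1,2}$ into those hitting only vertex~$1$, only vertex~$2$, and both. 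Counts in the first two families are asymptotically independent Poissons (their underlying ordered tuples are disjoint), while the expected number in the third family is $O(n^{-1})$, giving a vanishing correction. Hence $(\deg(1),\deg(2))\mid(x_1,x_2)\dto(D^{(1)},D^{(2)})$ with $D^{(i)}\sim\CPo(\ll_{x_i})$ independent, and the truncation argument extends this from bounded to integrable $\kf$. Integrating over $(x_1,x_2)\sim\mu\otimes\mu$ gives $\Pr(Y_1=Y_2=1)\to\mcpol\set\ell^2$, and Chebyshev's inequality yields \eqref{nl}. Since \eqref{nl} holds for every $\ell$ and $\sum_\ell\nl/n=1=\sum_\ell\mcpol\set\ell$, convergence of $(\nl/n)$ to $\mcpol$ as probability measures on $\bbN$ follows from the standard fact that pointwise convergence of probabilities with matching totals implies total variation convergence on a countable space.

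The main obstacle is the joint Poisson approximation of the family $(\nfj)$ at a single vertex, and its two-vertex analogue needed for (iii); once these are in place, the truncation arguments reduce everything to the bounded setting, which amounts to elementary computations with independent Poissons.
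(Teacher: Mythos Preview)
Your overall strategy matches the paper's closely --- Poisson approximation at a fixed vertex for bounded $\kf$, truncation by $\kfM$, and a two-vertex decorrelation argument for \eqref{nl} --- and parts (ii) and the truncation steps are fine. But there is a real gap in the core step for bounded $\kf$.

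You condition only on $x_v$ and then assert that each $\nfj$ is Poisson and that the family $(\nfj)_{F,j}$ is jointly independent ``because the underlying tuples for different pairs are disjoint''. This is false given only $x_v$: the tuples for different $(F,j)$ share \emph{vertices} other than $v$, hence share their (random) types, so the Poisson intensities are correlated. For instance, with $\ka_2(x,y)$ depending nontrivially on $y$, both $N_{K_2,1}$ and $N_{K_2,2}$ have conditional mean $\sum_{w\ne v}\ka_2(x_v,x_w)/n$ (up to symmetry), a common random quantity given $x_v$ alone. So given $x_v$, each $\nfj$ is a \emph{mixed} Poisson, and the family is not independent. The same issue recurs in your two-vertex argument for (iii).

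The fix, which is exactly what the paper does, is to condition on the full type vector $\vx$. Then the $\nfj$ really are independent Poissons with means $\hlfj(\vx)$, and the missing ingredient is to show that these random means concentrate: $\E\bigl|\hlfj(\vx)-\lfj(x_v)\bigr|=O(n^{-1/2})$ for bounded $\kf$, via a second-moment bound on $\hlfj(\vx)$ given $x_v$ (a finite sum of $O(n^{r-1})$ bounded terms with $O(n^{2r-3})$ correlated pairs). With this in hand, \refL{LB} converts closeness of intensities into closeness of compound Poisson laws, and integrating over $\vx$ gives (i); the analogous conditioning on $\vx$ in the two-vertex step yields $\Cov(Y_1,Y_2\mid\vx)=o(1)$ and hence \eqref{nl}. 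Once you insert this concentration-of-means step, your outline becomes essentially the paper's proof.
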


Note that the limit distribution exists for every integrable
kernel family, but has finite expectation only if the kernel family is
edge integrable.

As usual, \refT{Tdegree} applies to the variants of the model $G(n,\kf)$ 
discussed in Section~\ref{sec_ir}. In the proof, we shall mostly
work with the (non-Poisson) multi-graph form, where we add at most
one copy of a certain small graph $F$ with a particular vertex set,
but keep any resulting multiple edges.
\begin{proof}
Assume first that $\kf$ is a bounded kernel family, with $\ka_F\le M$
and $\ka_F=0$ if $\kF>M$.
Fix a vertex $v\in[n]$, and let $D$ be the degree of $v$.
For $F\in\F$ with $\kF\le M$ and $j\in V(F)$,
let $\nfj$ be the
number of added copies of $F$ that contain $v$ with $v$
corresponding to vertex $j$ in $F$.
Let 
\begin{equation}\label{dq}
\dq
=\sumfj\nfj\dfj;
\end{equation}
this is the number of edges added to $v$, including possible
repetitions.
Thus $D=\dq$ unless two added edges with endpoint $v$
coincide. For any other vertex $w$,
conditioned on the types $\vx=(x_1,\ldots,x_n)$, the number of atoms
containing both $v$ and $w$ is a sum $\sum_\nu I_\nu$
of independent Bernoulli variables $I_\nu\sim\Be(p_\nu)$, for $\nu$ in some index set.
For each $r=2,\dots,M$
there are $O(n^{r-2})$ such variables, each with $p_\nu=O(n^{1-r})$.
Hence,
\begin{equation*}
  \Pr\Bigpar{\sum_\nu I_\nu\ge2\Bigm|\vx}
\le\sum_{\nu_1\neq\nu_2}p_{\nu_1}p_{\nu_2}
\le\Bigpar{\sum_\nu p_\nu}^2
=O(n\qww).
\end{equation*}
Since there are $n-1$ possible choices for $w$, it follows that
\begin{equation}\label{ddq}
\dtv\bigpar{(D\mid\vx),\,(\dq\mid\vx)}\le
\Pr(D\neq \dq\mid\vx)=O(n\qw)
.
\end{equation}
Hence, in proving \ref{tda}, it makes no difference whether we work with 
$\dq$ or with $D$, i.e., with the multi-graph or simple graph version of $G(n,\kf)$.

Conditioned on $\vx$, $\nfj$ is a sum of independent Bernoulli
variables $\Be(\pfjax)$ for $\ga$ in some index set $\cA\fj$, with
$\pfjax=O(n^{1-\kF})$ given by \eqref{pdef} and $|\cA\fj|=O(n^{\kF-1})$.

Let $\hlfj(\vx)=\E(\nfj\mid\vx)=\suma\pfjax$.
By a classical Poisson approximation theorem 
(see \cite[(1.8)]{BHJ}),
\begin{equation}\label{v1}
\dtv\bigpar{(\nfj\mid\vx),\,\Po(\hlfjx)}
\le\suma\pfjax^2
=
O(n^{1-\kF})
=O(n\qw).
\end{equation}
(This follows easily from the elementary $\dtv(\Be(p),\,\Po(p))\le
p^2$; see e.g.\ \cite[page 4 and Theorem 2.M]{BHJ} for history and
further results.) 
Furthermore, given $\vx$, the random variables $\nfj$ are
independent, and thus \eqref{dq} and \eqref{v1} imply that if
$\hX\fj\sim\Po(\hlfjx)$ are independent, then
\begin{equation*}
  \dtv\Bigpar{(\dq\mid\vx),\,\sumfj\dfj\hX\fj}
=O(n\qw).
\end{equation*}
Since $\sumfj\dfj\hX\fj$ has a compound Poisson distribution
$\CPo(\hLa(\vx))$ with intensity
$\hLa(\vx)=\sumfj\hlfjx\gd_{\dfj}$, we have
\begin{equation*}
  \dtv\bigpar{(\dq\mid\vx),\,\CPo(\hLa(\vx))}
=O(n\qw).  
\end{equation*}
By \eqref{ddq} and \refL{LB}, this yields
\begin{equation*}
  \begin{split}
  \dtv\bigpar{(D\mid\vx),\,\CPo(\ll_{x_v})}
&\le O(n\qw)+\|\hLa(\vx)-\ll_{x_v}\|.
  \end{split}
\end{equation*}
In particular, for every $\ell\in\bbN$, taking $A=\set\ell$ in \eqref{dtv},
\begin{equation}\label{v4-}
 \bm{ \Pr(D=\ell\mid\vx)-\CPo(\ll_{x_v})\set\ell }
  \le O(n\qw)+\|\hLa(\vx)-\ll_{x_v}\|.
\end{equation}
Taking the expectation of both sides, and noting that
$\E\Pr(D=\ell\mid\vx)=\Pr(D=\ell)$ and
$\E\CPo(\ll_{x_v})\set\ell=\mcpol\set\ell$,
we find that
\begin{equation}\label{v4}
 \bm{ \Pr(D=\ell)-\mcpol\set\ell } \le O(n\qw)+\E\|\hLa(\vx)-\ll_{x_v}\|.
\end{equation}
We shall show that the final term is small.

By \eqref{pdef}, with $r=\kF$,
\begin{equation*}
  \hlfjx=n^{1-r}\sum\ka_F(x_{v_1},\dots,x_{v_r}),
\end{equation*}
where the sum runs over all $(n-1)\fall{r-1}$ sequences $v_1,\dots,v_r$ of
distinct elements in $[n]$ with $v_j=v$. Consequently, by \eqref{v2},
\begin{equation}\label{v3}
\E\bigpar{\hlfjx\mid x_v}
=\bigpar{1-O(n\qw)}\lfj(x_v).
  \end{equation}
Recalling that $\kf$ is bounded, it is easy to check
(as in the similar argument in the
proof of \refL{LA}) that
  \begin{equation*}
\Var\bigpar{\hlfjx\mid x_v}
=\E\bigpar{(\hlfjx-\E(\hlfjx\mid x_v))^2\mid x_v}
=O(n\qw)
  \end{equation*}
and thus, by the \CSineq{} and \eqref{v3},
\begin{equation*}
  \E\bigpar{|\hlfjx-\lfj(x_v)|\bigm| x_v}
  =O(n\qqw).
\end{equation*}
Consequently, using again that $\kf$ is bounded,
  \begin{equation}\label{esmall}
\E\|\hLa(\vx)-\ll_{x_v}\|
\le
\E\sumfj|\hlfjx-\lfj(x_v)|
=O(n\qqw) =o(1),
  \end{equation}
so
\begin{equation*}
 \|\hLa(\vx)-\ll_{x_v}\| \pto 0.
\end{equation*}
Combining \eqref{esmall} and \eqref{v4}
we see that $\Pr(D=\ell)\to\mcpol\set\ell$ for every
$\ell$, i.e., $D\dto\mcpol$, which proves \ref{tda} for
bounded $\kf$.

Next we turn to the proof of \ref{tdb}, assuming still that $\kf$ is bounded.
Fix a number $\ell\in\bbN$, and
for $v\in[n]$ let $D_v$ be the degree of $v$ in \gnk{}, and
$I_v$ the indicator $\ett{D_v=\ell}$.

Fix two distinct vertices $v$ and $w$,
let 
$\cG$ be the set of atoms that contain both $v$ and
$w$, and let 
$\tD_v$ and
$\tD_w$ be the degrees of the vertices if we delete (or ignore)
the atoms in $\cG$.
Since $\kf$ bounded,  the expected number $\E|\cG|$
of such exceptional atoms is $O(n\qw)$, 
and thus
\begin{equation*}
  \Pr(D_v\neq\tD_v),
\,
\Pr(D_w\neq\tD_w)
\le\Pr(\cG\neq\emptyset)
\le\E|\cG|
=O(n\qw).
\end{equation*}
Moreover, these bounds hold  conditional on $\vx$.
Furthermore, given $\vx$, $\tD_v$ and $\tD_w$ are independent.
Consequently, for any $\ell\in\bbN$,
\begin{equation*}
  \begin{split}
\E(I_vI_w \mid \vx)
&=
  \Pr(D_v=D_w=\ell \mid \vx)
=
  \Pr(\tD_v=\tD_w=\ell \mid \vx)+o(1)
\\&
=
  \Pr(\tD_v=\ell\mid \vx)\Pr(\tD_w=\ell\mid \vx)+o(1)
\\&
=
  \Pr(D_v=\ell\mid \vx)\Pr(D_w=\ell\mid \vx)+o(1)
\\&
=
\E(I_v\mid\vx)\E(I_w\mid\vx)+o(1),
  \end{split}
\end{equation*}
and thus $\Cov(I_v,I_w\mid \vx)=o(1)$.
Since $\nl=\sum_v I_v$, 
it follows that $\Var(\nl\mid \vx)=o(n^2)$ and thus 
\begin{equation}\label{w1}
\nl=\E(\nl\mid\vx)+\opn.
\end{equation}
Further, if we write $h(x)=\cpolx\set l$ and sum \eqref{v4-} (where
$D=D_v$) over $v$, we obtain
\begin{equation*}
  \Bigabs{\E(\nl\mid\vx)-\sum_{v=1}^nh(x_v)}
=
 \Bigabs{\sum_{v=1}^n\bigpar{\Pr(D_v=\ell\mid\vx)-h(x_v)}}
\le O(1)+ \sum_{v=1}^n \E\|\hLa(\vx)-\ll_{x_v}\|.
\end{equation*}
By \eqref{esmall}, the right-hand side has expectation $o(n)$ and thus
\begin{equation}\label{w2}
  \E(\nl\mid\vx) = \sum_{v=1}^nh(x_v)+\opn.
\end{equation}
Now $h(x_1),\dots,h(x_n)$ are \iid\ random variables with mean
\begin{equation*}
  \E h(x_v)=\ints h(x)\dd\mu(x)
=\ints \cpolx\set l \dd\mu(x)
= \mcpol\set l.
\end{equation*}
Hence, by the law of large numbers, 
$\frac1n\sum_{v=1}^nh(x_v)\pto\mcpol\set l$, which is the same as
\begin{equation}
  \label{w3}
\sum_{v=1}^nh(x_v)=\mcpol\set l n+\opn.
\end{equation}
The result \eqref{nl} follows from \eqref{w1}, \eqref{w2}, \eqref{w3}.

Furthermore,
\eqref{nl} says that $(\nl/n)_\ell\pto\mcpol$ in the space
$\bbR^\infty$ of sequences, equipped with the product topology, which
is the same as separate convergence of the components. However, it is
well-known, and easy to see (e.g.\ by compactness) that restricted to
the set of probability distributions, this equals the standard
topology there.

We have proved \ref{tda} and \ref{tdb} for bounded $\kf$. For general $\kf$ we use
truncations: define $\kaM_F$ in analogy with \eqref{trunc},
setting $\kaM_F=\ka_F\wedge M$ for $|F|\le M$ and $\kaM_F=0$ for $|F|>M$. 
We use $\ll\M$, $\nl\M$ and so on to denote quantities defined for
\gnkm. 
For fixed $M$, applying \eqref{nl} for the bounded
kernel family $\kfM$, we have
$\nl\M/n\pto\mcpolm\set\ell$ as $n\to\infty$, and thus
by dominated convergence
\begin{equation}\label{v6}
\E\bigabs{\nl\M/n-\mcpolm\set\ell}\to0.
\end{equation}
Furthermore, for every $x\in\sss$ and $d\ge 1$,
the intensities $\lxdm$ converge to $\lxd$ 
as \Mtoo{}, by \eqref{lxd}, \eqref{v2} and monotone convergence.
Thus a simple coupling shows that 
$\mcpolm\dto\mcpol$ as \Mtoo.
We may couple $\gnk$ and $\gnkm$ in the obvious way so that $\gnk$ is
obtained from $\gnkm$ by adding further atoms, say $\nfm$
copies of each $F\in\F$. Then $\E\nfm\le n\intsf(\ka_F-\kaM_F)$, and since at
most $\sum_F\kF\nfm$ vertices are affected by the extra additions,
\begin{equation}\label{52}
  \E\Bigabs{\frac{\nl}n-\frac{\nl\M}n}
\le\frac1n\E\sum_F\kF\nfm
\le\sum_F\kF\intsf(\ka_F-\kaM_F).
\end{equation}
The right hand side is independent of $n$, and tends to 0 as
$M\to\infty$ by dominated convergence and our assumption that $\kf$ is
integrable. For any $\eps>0$, we may thus choose $M$ so large that
the right hand side of \eqref{52} is less than $\eps$, and also so that
$|\mcpolm\set\ell-\mcpol\set\ell|<\eps$; then by \eqref{v6}, for large
enough $n$,
\begin{equation*}
\E\bigabs{\nl/n-\mcpol\set\ell}<3\eps,
\end{equation*}
which proves \eqref{nl} and thus \ref{tdb}.
Further, \eqref{nl} and dominated convergence yields 
$\Pr(D_n=\ell)=\E(\nl/n)\to\mcpol\set\ell$, which proves \ref{tda}.

Finally we prove \ref{tdc}. (This could also easily be done directly in a
fairly straightforward way.)
First, \eqref{rcpo} and \eqref{lxd} yield 
\begin{equation*}
  \E\mcpol=\ints\sumfj\lxfj\dfj\dd\mu(x)
=\sumF\sumjf\dfj\intsf\ka_F,
\end{equation*}
which yields the formula for $\E\mcpol$ 
claimed in the theorem, since $\sum_j\dfj=2e(F)$.

Next, the convergence in distribution \ref{tda} yields (by a version of
Fatou's Lemma) the inequality
$\liminf_\ntoo\E D_n\ge\E\mcpol$.
Finally, recalling the definition \eqref{dq} of $\dq_n$ (denoted $\dq$ in \eqref{dq}),
we have $D_n\le\dq_n$ and thus
\begin{equation*}
  \begin{split}
\E D_n
&\le \E\dq_n
=\sumfj\dfj\E\nfj
=\sumF\sumjf\dfj \frac{(n-1)\fall{\kF-1}}{n^{\kF-1}}\intsf\ka_F
\\&
\le
\sumF 2e(F)\intsf\ka_F
=\E\mcpol,
  \end{split}
\end{equation*}
yielding the opposite inequality
$\limsup_\ntoo\E D_n\le\E\mcpol$.
\end{proof}

Part \ref{tdc} of \refT{Tdegree} is not surprising.
Also, since by symmetry $\E D_n=\frac2n\E e(\gnk)$,
it follows from \refT{Tedges} (which we shall not prove until the next section).
For bounded kernel families, it is easy to see that also higher
moments of $D_n$ converge to the corresponding moments of $\mcpol$,
for example by first showing that $\E D_n^m=O(1)$ for every fixed $m$
and combining this with \ref{tda}. This extends to certain unbounded
kernel families, but somewhat surprisingly not to all integrable
kernel families, as the following example shows.

\begin{example}\label{badP2}
Let $\sss=[0,1)$ with Lebesgue measure, and regard $\sss$ as a
circle with the usual metric $d(x,y)=\min(|x-y|,\,1-|x-y|)$. We construct our
random graph by adding triangles only; thus $\ka_F=0$ for $F\neq
K_3$, and we take
\begin{equation}\label{badP2k}
 \ka_3(x,y,z)=d(x,y)^{\eps-1}+d(x,z)^{\eps-1}+d(y,z)^{\eps-1}
\end{equation}
for some small $\eps>0$, for example $\eps=1/10$.
Clearly, $\kf$ is an integrable kernel family (and a hyperkernel).

Let $\gD=\min_{1\le i<j\le n} d(x_i,x_j)$ be the minimal spacing
between the $n$ independent uniformly distributed random points $x_i$,
$1\le i\le n$. It is well-known that this minimal spacing is of 
order $n^{-2}$; in fact, it is easy to see that for $0\le s\le 1/n$
we have
$\Pr(\gD>s)=(1-sn)^{n-1}\le e^{-sn(n-1)}$, and in
particular $\gD\le n^{\eps-2}$ whp.
Hence there exist whp two distinct indices $i$ and $j$ with
$d(x_i,x_j)\le n^{\eps-2}$, and thus, for large $n$ and every $x_k$,
$\ka_3(x_i,x_j,x_k)\ge n^{(\eps-2)(\eps-1)} \ge 2n^{2-3\eps}$. 
If $i$ and $j$ are chosen such that this holds, then from \eqref{pdef}
we have $p(i,j,k;K_3)\ge 2n^{-3\eps}$ for all $k\neq i,j$, and thus
the number of $k$ such that the triangle $ijk$ is an atom
stochastically dominates the binomial distribution
$\Bi(n-2,2n^{-3\eps})$; hence this number is whp at least $n^{1-3\eps}$. 

We have shown that whp there are at least two vertices $i$ and $j$
with degrees $\ge n^{1-3\eps}$, and thus, for large $n$, $\Pr(D_n\ge
n^{1-3\eps})\ge(1-o(1))\frac2n \ge \frac1n$. Consequently, for large $n$,
\begin{equation*}
 \E D_n^2\ge \frac1n n^{2(1-3\eps)} = n^{1-6\eps} 
\to\infty.
\end{equation*}

On the other hand, for some finite $c=\int_{\sss^3}\ka_3$, by
symmetry, $\la_{K_3,j}(x)=c$ and $\la_x=3c\gd_2$. Hence
$\mcpol=\CPo(3c\gd_2)$, which is the distribution of $2X$ with
$X\sim\Po(3c)$, which has all moments finite.
\end{example}

As we shall see in Theorem~\ref{th_reg}, this situation cannot arise in the
edge-only version of the model, i.e., the model in~\cite{BJR}; in
the terminology of the next section, all copies of $P_2$ are then `regular'.

In Section~\ref{sec_pl} we shall illustrate \refT{Tdegree} by
giving a natural family of examples with degree distributions with
power-law tails.

\section{Small subgraphs}\label{sec_subgraphs}

In this section we turn to the final general property
of $G(n,\kf)$ we shall study, the asymptotic number of copies
of a fixed graph $F$ in $G(n,\kf)$; throughout this section, $\kf$ denotes
a kernel family $(\ka_F)_{F\in\F}$, rather than a hyperkernel. We work
with the multi-graph version of the model.

Although mathematically not as interesting as the phase transition,
the number of small graphs in $G(n,\kf)$ is important as it is directly related
to the original motivation for the model. Indeed, recall that perhaps the main
defect of the model of~\cite{BJR}, i.e., the edge-only case of the present
model, is that it produces graphs with very few (usually $\Op(1)$) triangles,
i.e., graphs with clustering coefficients that are essentially zero. This contrasts
strongly with many of the real-world networks we wish to model.

The simplest way that a copy of some graph $F$ may arise in $G(n,\kf)$ is 
as an atom. The expected number of such copies is simply
\begin{equation}\label{nf}
 \frac{n\fall{|F|}}{n^{|F|-1}} \int_{\sss^{|F|}} \ka_F \le n \int_{\sss^{|F|}} \ka_F.
\end{equation}

The next simplest way that a copy of $F$ may arise is as a subgraph of some atom $F'$
of $G(n,\kf)$.
Let us call such copies of $F$ {\em direct};
we include the case $F'=F$.
Let $n(F,F')$ denote the number of subgraphs of $F'$ isomorphic to $F$,
so $n(K_3,K_4)=4$, for example. Set
\[
 \tone(F,\kf) = \sum_{F'\in\F} n(F,F')\int \ka_{F'} \le \infty,
\]
and let $\nd(F,G(n,\kf))$ denote the number of direct copies of $F$ in $G(n,\kf)$.
Then from \eqref{nf} we see that
\[
 \E \nd(F,G(n,\kf)) \le \tone(F,\kf)n,
\]
and that if $\kf$ is bounded, then
\[
 \E \nd(F,G(n,\kf)) = \tone(F,\kf)n+O(1).
\]
The reason for the somewhat peculiar notation $\tone$ is as follows: the subscript
$1$ indicates direct copies (arising from only one atom). The tilde will be
useful later to differentiate from standard notation $t(F,\ka)$ in other contexts.

It will turn out that in well behaved cases (for example for all bounded kernel families),
essentially all copies of any $2$-connected graph $F$ in $G(n,\kf)$ arise directly.
Unfortunately, this is not the case for general $F$. Perhaps the main special
cases we are interested in are stars; the number of copies of the star $K_{1,2}$
(i.e., the path $P_2$)
is needed to calculate the clustering coefficient, for example.
Note that the number of copies of the star $K_{1,k}$ ($k\ge2$) in any
graph $G$ is simply $|G|/k!$ times the $k$th factorial
moment of the degree of a random vertex; hence counting stars 
is closely related to studying the degree distribution, which we did
for \gnk{} in \refS{sec_degrees}. 

Let us say that a copy of $F$ in $G(n,\kf)$ arises {\em indirectly} if it contains
edges of at least two of the atoms making up $G(n,\kf)$.
To understand the expected number of such copies we first need to understand
the probability that a certain set of vertices form a copy of $F$
{\em given the types of the vertices}. More precisely, we consider
the expectation of the number of copies of $F$ with a given vertex set, even though this number
is highly unlikely to exceed 1.

Let $F$ be a connected graph with $r$ vertices.
Let $\emb(F,F')$ denote the number of {\em embeddings} of $F$ into $F'$,
i.e., the number of injective homomorphisms from $F$ to $F'$,
so $\emb(F,F')=n(F,F')\aut(F)$. 
Fixing a labelling of $F$, let $X_F^0(G)$
denote the number of copies
of $F$ in a multigraph $G$ with vertex $i$ of $F$ corresponding
to vertex $i$ of $G$. (Thus $X_F^0(G)$ is $0$ or $1$ if $G$ is simple.)
The contribution to $\E X_F^0(G(n,\kf)\mid x_1,\ldots,x_r)$
from copies of $F$ arising as subgraphs of
atoms isomorphic to a given $F'$ with $r'$ vertices is exactly
\[
 \sum_{\varphi:F\to F'} \frac{(n-r)\fall{r'-r}}{n^{r'-1}} \int_{\sss^{r'-r}} \ka_{F'}(y_1,\ldots,y_{r'}),
\]
where $\varphi$ runs over all $\emb(F,F')$ embeddings of $F$ into $F'$,
we take $y_j=x_i$ if $\varphi(i)=j$, and we integrate over the remaining $r'-r$ variables
$y_j$.

Set
\begin{equation}\label{sigdef}
 \sigma_F(x_1,\ldots,x_r)=\sigma_F(x_1,\ldots,x_r;\kf) =
 \sum_{F'} \sum_{\varphi:F\to F'} \int_{\sss^{r'-r}} \ka_{F'}(y_1,\ldots,y_{r'}).
\end{equation}
Then we have
\begin{equation}\label{sup}
 \E\bb{ X_F^0(G(n,\kf)) \bigm| x_1,\ldots,x_r } \le n^{-(r-1)} \sigma_F(x_1,\ldots,x_r;\kf),
\end{equation}
and if $\kf$ is bounded then the relative error is $O(n^{-1})$.

Comparing \eqref{sigdef} and \eqref{kaedef}, note that if $F=K_2$, then $\sigma_F=\kae$.
Before continuing, let us comment on the normalization. Recall that in defining $G(n,\kf)$,
we consider all $r!$ possible ways of adding a (labelled)
copy of $F$ on vertex set $\{1,2,\ldots,r\}$, say,
adding each copy with probability $\ka_F(x_1,\ldots,x_r)/n^{r-1}$.
This means that the contribution from $\ka_F$ to $X_F^0(G(n,\kf))$ is
$\aut(F)\ka_F(x_1,\ldots,x_r)/n^{r-1}$, and, correspondingly,
the contribution from $\ka_F$ to $\sigma_F$ is $\aut(F)\ka_F(x_1,\ldots,x_r)$.
In other words, while having the same form as a kernel, $\sigma_F$ is normalized differently.
This situation arises already in the edge-only case, where $\kae(x,y)=2\ka_2(x,y)$.
It turns out that here the normalization of $\sigma_F$, giving directly the probability
that a certain set of edges forming a copy of $F$ is present, is the natural
one. Note that if we had used this normalization from the beginning, then formulae such
as \eqref{nf} would have extra factors.

Let $F$ be a connected graph with vertex set $[r]$. We say that a set $F_1,\ldots,F_a$
of connected graphs forms a {\em tree decomposition} of $F$ if each $F_i$ is connected,
the union of the $F_i$ is exactly $F$, any two of the $F_i$ share at most one vertex,
and the $F_i$ intersect in a tree-like structure. The last condition may be
expressed by saying that the $F_i$ may be ordered so that each $F_j$ 
other than the first meets the union of the previous ones in exactly one vertex.
Equivalently, the intersection is tree-like if $|F|=1+\sum_i (|F_i|-1)$.
Equivalently, defining (as usual) a {\em block} of a graph $G$ to be
either a maximal 2-connected subgraph of $G$ or a bridge in $G$,
$F_1,\ldots,F_a$ forms a tree composition of $F$ if each $F_i$
is a connected union of one or more blocks of $F$, with each block contained in exactly one $F_i$.
(Cf.\ \cite[p.~74]{Bollobas:MGT}.)

Note that we allow $a=1$, in which case $F_1=F$. For $a\ge 2$, the
order of the factors is irrelevant, so, for example, $K_{1,2}$ has a
unique non-trivial tree decomposition, into two edges. Note also that if
$F$ is 2-connected, then it has only the trivial tree decomposition.

Let us say that a copy of $F$ in $G(n,\ka)$ if {\em regular} if it is the union
of graphs $F_1,\ldots,F_a$ forming a tree decomposition of $F$, where each $F_i$
arises directly as a subgraph of some atom $F_i'$, and $V(F_i')\cap V(F_j')=V(F_i)\cap V(F_j)$
for all $i\ne j$
(with this intersection containing at most one vertex).
We can write down exactly the probability that $G(n,\kf)$ contains a regular copy of $F$
with vertex set $1,\ldots,r$ in terms of certain integrals of products of conditional expectations
$\E(X_{F_i}^0(G)\mid x_1,\ldots,x_{s})$. We shall not do so.
Instead, let
\begin{equation}\label{t1fdef}
 \traw(F,\kf) = \sum_{\{F_1,\ldots,F_a\}} \int_{\sss^r} \sigma_{F_1}\sigma_{F_2}\cdots\sigma_{F_a} \dd\mu(x_1)\cdots\dd\mu(x_r),
\end{equation}
where the sum runs over all tree decompositions of $F$ and each term $\sigma_{F_i}$ is evaluated
at the subset of $x_1,\ldots,x_r$ corresponding to the vertices of $F_i\subset F$, and set
\begin{equation}\label{t2fdef}
 \ts(F,\kf) = \aut(F)^{-1} \traw(F,\kf).
\end{equation}
Note that these definitions
extend to disconnected graphs $F$, taking
the sum over all combinations of one tree decomposition for each component of $F$.

The upper bound \eqref{sup} easily implies that the expected number of regular copies of $F$
in $G(n,\kf)$ is at most $\ts(F,\kf)n$, and furthermore this bound is correct
within a factor $1+O(n^{-1})$ if $\kf$ is bounded; the factor $\aut(F)^{-1}$ appears
because there are $n\fall{r}/\aut(F)$ potential copies of $F$.
Note that the number $\emb(F,G)$ of embeddings of a graph $F$ into a graph $G$,
i.e., the number of injective homomorphisms from $F$ to $G$, is simply $\aut(F)n(F,G)$.
Hence $\traw(F,\kf)$ is the appropriate normalization for counting embeddings of $F$ 
into $G(n,\kf)$ rather than copies of $F$. In other contexts, when dealing with dense
graphs, it turns out to be most natural to consider homomorphisms from $F$ to $G$,
the number of which will be very close to $\emb(F,G)$. Thus the normalization
in \eqref{t1fdef} is standard in related contexts. (See, for example,
Lov\'asz and Szegedy~\cite{LSz1}.)

Let us illustrate the definitions above with two simple examples.

\begin{example}\label{EtK2}
  The simplest case is $F=K_2$.
In this case, there is only the trivial tree decomposition, and
\eqref{t1fdef} and \eqref{t2fdef} yield
\begin{equation}\label{etK2}
  \ts(K_2,\kf)
=\frac12\int_{\sss^2}\sigma_{K_2}(x,y)
=\frac12\int_{\sss^2}\kae(x,y)
=\xiek.
\end{equation}
\end{example}

\begin{example}\label{EtP2}
Suppose that $\kf$
contains only two non-zero kernels, $\ka_2$, corresponding to an edge, and $\ka_3$,
corresponding to a triangle; our aim is to calculate $\ts(P_2,\kf)$ in this case,
where $P_2$ is the path with 2 edges.
Using symmetry of $\ka_2$ and $\ka_3$,
\begin{equation}\label{sK2}
 \sigma_{K_2}(x,y) = 2\ka_2(x,y) + 6\int_\sss \ka_3(x,y,z)\dd\mu(z),
\end{equation}
while
\begin{equation}\label{sK3}
 \sigma_{P_2}(x,y,z) = 6\ka_3(x,y,z),
\end{equation}
reflecting the fact the $P_2$ $ijk$ appears directly in $G(n,\kf)$ if and only
if we added a triangle with vertex set $\{i,j,k\}$, and this vertex set
corresponds to 6 $3$-tuples.

Since $\aut(P_2)=2$, it follows that
\[
 \ts(P_2,\kf) = \frac{1}{2}\int \bigpar{\sigma_{P_2}(x,y,z)
 + \sigma_{K_2}(x,y)\sigma_{K_2}(y,z)} \dd\mu(x)\dd\mu(y)\dd\mu(z).
\]
\end{example}

More generally, let $F$ be any (simple)
subgraph of $G_n=G(n,\kf)$ with $k$ components.
(We abuse notation by now writing $F$ for a specific subgraph of $G_n$, rather than
an isomorphism class of graphs.)
Let $F_1',\ldots,F_a'$ list all atoms contributing edges of $F$,
and let $F_i=F_i'\cap F$, where we take the intersection in the multigraph
sense, i.e., intersect the edge sets.
For example, if $e_1$ and $e_2$
are parallel edges in $G_n$ forming a double edge from $i$
to $j$, and $e_1\in E(F)$, $e_2\in E(F_1')$,
then $F_1=F_1'\cap F$ contains no $ij$ edge, even though $F_1'$ and $F$ each do so.
By definition each $F_i$ contains at least one edge, and
$F$ is the edge-disjoint union of the $F_i$.
Since $F$ has $k$ components, 
when adding the $F_i$ one by one, at least $a-k$ times a new component is {\em not} created,
so at least $a-k$ times at least one vertex of $F_i$, and hence of $F_i'$,
is repeated. It follows that
\begin{equation}\label{reg?}
 \sum_i (|F_i'|-1)  \ge \Bigabs{\bigcup_i F_i'}-k. 
\end{equation}
Extending our earlier definition, 
we call $F$ {\em regular} if equality holds in \eqref{reg?}, 
and {\em exceptional} otherwise.
Note that if any $F_i$ is disconnected, then $F$ is exceptional.

Let $\nr(F,G_n)$ denote the number of regular copies of $F$ in $G_n=G(n,\kf)$,
and $\nx(F,G_n)$ the number of exceptional copies.

\begin{theorem}\label{th_ssbd}
Let $G_n=G(n,\kf)$, where $\kf$ is a kernel family, and let $F$ be a graph with $k$ components.
Then
\[
 \E \nr(F,G_n) \le n^k \ts(F,\kf).
\]
If $\kf$ is bounded, then
\[
 \E \nx(F,G_n) =O(n^{k-1}),
\]
\[
 \Var(n(F,G_n)) =O(n^{2k-1}),
\]
and 
\[
 n(F,G_n)=\nr(F,G_n)+\nx(F,G_n) = n^k \ts(F,\kf) (1+\Op(n^{-1/2})).
\]
\end{theorem}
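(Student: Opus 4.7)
The plan is to prove the four statements in turn, working with the multi-graph version of $\gnk$ and exploiting the conditional independence of the atoms given the type sequence $\vx=(x_1,\dots,x_n)$. For the first (unconditional) bound on $\E\nr(F,G_n)$, I would sum over all injective maps $\phi\colon V(F)\to[n]$ and over all tree decompositions of (the components of) $F$. A regular copy with decomposition $\{F_1,\dots,F_a\}$ arises when, for each $i$, the subgraph $F_i$ appears directly inside some atom $F_i'$ whose extra vertices are disjoint from the other $F_j'$. Since atoms are conditionally independent given $\vx$ and any two pieces $F_i$ in the decomposition share at most one vertex, the inequality \eqref{sup} applied piece by piece bounds the conditional expected number of regular labelled copies for a given $\phi$ by $\prod_i n^{-(|F_i|-1)}\sigma_{F_i}$. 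Summing over $\phi$ (at most $n^r$ choices, $r=|F|$), integrating over the i.i.d.\ types and using $\sum_i(|F_i|-1)=r-k$ yields $n^k\traw(F,\kf)$ labelled regular copies; dividing by $\aut(F)$ gives $\E\nr(F,G_n)\le n^k\ts(F,\kf)$.

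In the bounded case, \eqref{sup} holds with multiplicative error $1+O(n^{-1})$ and $n\fall{r}=n^r(1+O(n^{-1}))$, which upgrades the above to $\E\nr(F,G_n)=n^k\ts(F,\kf)+O(n^{k-1})$. For the exceptional copies, I would classify configurations by their ``topology'': the isomorphism class of $\bigcup_i F_i'$ together with the prescribed embedding of $F$ into it. Because $\kf$ is bounded, only finitely many atom types can occur, and for each fixed topology the expected number of corresponding exceptional copies is bounded by a constant times $n^{s-\sum_i(|F_i'|-1)}$ with $s=|\bigcup_i F_i'|$. Exceptionality, i.e.\ strict inequality in \eqref{reg?}, forces the exponent $s-\sum_i(|F_i'|-1)\le k-1$, so summing over the finite list of topologies gives $\E\nx(F,G_n)=O(n^{k-1})$.

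For the variance, I would expand $\E[n(F,G_n)^2]$ as a sum of joint probabilities over ordered pairs $(H_1,H_2)$ of copies of $F$, classified by the combined graph $H_1\cup H_2$. For vertex-disjoint pairs, the joint probability factors into the marginal probabilities up to a correction from ``bridging'' atoms with at least two vertices in each copy; such atoms have at least four vertices, so the total correction is of lower order, and this case contributes $(\E n(F,G_n))^2+O(n^{2k-1})$. For pairs sharing at least one vertex, the union $H_1\cup H_2$ has at most $2r-1$ vertices and at most $2k-1$ components, so the regular/exceptional count of the previous paragraphs applied to $H_1\cup H_2$ bounds this contribution by $O(n^{2k-1})$. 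Combining gives $\Var(n(F,G_n))=O(n^{2k-1})$, and Chebyshev's inequality together with the mean estimate yields $n(F,G_n)=n^k\ts(F,\kf)+\Op(n^{k-1/2})=n^k\ts(F,\kf)(1+\Op(n^{-1/2}))$.

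The main obstacle is the combinatorial bookkeeping for non-regular configurations: one must verify uniformly that every ``non-tree-like'' overlap of atoms loses at least one degree of freedom compared to the regular count, and that the same phenomenon controls both the exceptional count and the covariance contribution from overlapping pairs. This is exactly what inequality \eqref{reg?} encodes, and in the bounded case the finiteness of the lists of atom types and overlap topologies keeps the enumeration tractable.
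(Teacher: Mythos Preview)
Your proposal is correct and follows essentially the same approach as the paper. The one noteworthy difference is in the variance step: where you argue that vertex-disjoint pairs $(H_1,H_2)$ contribute $(\E n(F,G_n))^2+O(n^{2k-1})$ by appealing to approximate factorization of joint probabilities modulo ``bridging'' atoms, the paper observes more directly that such pairs are exactly $N$ times the copies of the disconnected graph $2F$ (with $N=\aut(2F)/\aut(F)^2$), and then simply reapplies the mean estimate \eqref{tot} already proved to the $2k$-component graph $2F$, using $\traw(2F,\kf)=\traw(F,\kf)^2$. This recycles the regular/exceptional analysis wholesale and absorbs your bridging-atom correction into the already-handled exceptional count for $2F$; it is a cleaner bookkeeping device but not a different idea.
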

\begin{proof}
We have essentially given the proof of the first statement, so let us just outline it.
To construct a regular copy of $F$ in $G_n$ we must first choose graphs $F_1,\ldots,F_a$
on $V(F)$ forming a tree decomposition of each component of $F$. Then we must choose
a graph $F_i'$ containing each $F_i$ to be the atom that will contain $F_i$.
Then we must choose $s=|\bigcup_i F_i'|$ distinct vertices
$v_1,\ldots,v_s$
from $1,\ldots,n$ to be the vertices
of the $F_i'$, where (since $F$ is regular), we have
$s=k+\sum_i (|F_i'|-1)$.

Note that there are $n\fall{s}\le n^s$ choices for the vertices $v_i$. (We are glossing over
the details of the counting, and in particular various factors $\aut(H)$ for various
graphs $H$. It should be clear comparing the definition of $\ts(F,\kf)$ with what follows
that these are in the end accounted for correctly.)

Given the vertex types,
the probability that these particular graphs $F_i'$ arise is then (up to certain factors
$\aut(F_i')$) a product of factors of the form $\ka_{F_i'}/n^{|F_i'|-1}$,
where the kernel is evaluated at an appropriate subset of $x_{v_1},\ldots,x_{v_s}$.
Note that the overall power of $n$ in the denominator is $\sum_i (|F_i'|-1)=s-k$.
Integrating out over the variables $x_j$ corresponding to $V(F_i')\setminus V(F_i)$,
and summing over all $F_i'\supset F_i$, the factor $\ka_{F_i'}$ becomes a factor
$\sigma_F$. Finally, integrating out over the remaining variables,
corresponding to vertices of $F$, and summing over decompositions,
we obtain $n^k \ts(F,\kf)$ as an upper bound.

If $\kf$ is bounded then the number $s$ of vertices appearing above
is bounded, so $n\fall{s}/n^s = 1-O(n^{-1})$, where the error term is uniform over
all choices for $F_1',\ldots,F_a'$.
It follows that in this case,
\[
 \E \nr(F,G_n) = \ts(F,\kf)n^k(1-O(n^{-1})).
\]

Arguing similarly for exceptional copies, the power $\sum_i (|F_i'|-1)$ of $n$ in the denominator
is now at least $s-k+1$, and it follows that if $\kf$ is bounded,
then $\E \nx(F,G_n) = O(n^{k-1})$ as claimed. It follows that
\begin{equation}\label{tot}
 \E n(F,G_n) = \ts(F,\kf)n^k +O(n^{k-1}).
\end{equation}

Finally, for the variance we simply note that
$\E n(F,G_n)^2$ is the expected number of ordered pairs $(F_1,F_2)$ of not necessarily disjoint
copies of $F$ in $G_n$.
If $F_1$ and $F_2$ share one or more vertices, then $F_1\cup F_2$ has at most $2k-1$
components. From \eqref{tot}, the expected number of such pairs is $O(n^{2k-1})$.
The expected number of pairs with $F_1$ and $F_2$ disjoint is simply 
$N\E n(2F,G_n)$, where $2F$ is the disjoint union of two copies of $F$
and $N=\aut(2F)/\aut(F)^2$
is a symmetry factor, the number of ways $2F$ can be divided
into 2 copies of $F$. (If $F$ is connected then simply $N=2$ and in
general, if $F$ has distinct components $F_j$ with multiplicities
$m_j$, then $N=\prod_j\binom {2m_j}{m_j}$.)
Since $\traw(2F,\kf)=\traw(F,\kf)^2$, we have
$\ts(2F,\kf)=\ts(F,\kf)^2/N$, so \eqref{tot} gives
\[
 \E n(F,G_n)^2  = \ts(F,\kf)^2n^{2k} + O(n^{2k-1}),
\]
from which the variance bound follows. The final bound follows by Chebyshev's inequality.
\end{proof}

For bounded kernel families, \refT{th_ssbd} is more or less the end of the story, although
one can of course prove more precise results. For unbounded kernel families the situation
is much more complicated. Let us first note that regular copies of $F$ do not give rise
to any problems.

\begin{theorem}\label{th_reg}
Let $\kf$ be a kernel family and $F$ a connected graph, and
let $G_n=G(n,\kf)$. Then $\nr(F,G_n)/n\pto \ts(F,\kf)\le\infty$.
In other words,
if\/ $\ts(F,\kf)=\infty$, then for any constant $C$, whp $\nr(F,G_n)\ge Cn$,
while if \/ $\ts(F,\kf)<\infty$, then $\nr(F,G_n) =\ts(F,\kf)n+\op(n)$.
\end{theorem}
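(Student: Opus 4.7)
The plan is to reduce from an arbitrary integrable kernel family to the bounded case, which is essentially contained in Theorem~\ref{th_ssbd}. Indeed, for bounded $\kf$ and connected $F$ (so $k=1$ in that theorem) we have $n(F,G_n)=\ts(F,\kf)n+\Op(n\qq)$ and $\E\nx(F,G_n)=O(1)$; Markov's inequality gives $\nx(F,G_n)=\op(n)$, so $\nr(F,G_n)=n(F,G_n)-\nx(F,G_n)=\ts(F,\kf)n+\op(n)$, disposing of the bounded case.

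For general integrable $\kf$, I would use the truncation $\kfM$ of \eqref{trunc} together with the natural coupling in the Poisson multigraph model in which $G(n,\kfM)$ is obtained from $G(n,\kf)$ by deleting the atoms arising from the tail $\kf-\kfM$. The key observation is that under this coupling every regular copy of $F$ in $G(n,\kfM)$ remains a regular copy in $G(n,\kf)$: a copy of $F$ is a distinguished set of edge-occurrences, each attached to a specific atom, and the list of atoms contributing edges of $F$ is the same in both graphs, since a tail atom added when passing from $G(n,\kfM)$ to $G(n,\kf)$ can only produce parallel edge-occurrences outside the tracked copy. Hence $\nr(F,G(n,\kfM))\le\nr(F,G(n,\kf))$ in the coupling, and monotone convergence applied to $\sigma_{F'}$ via \eqref{sigdef} and then to the sums in \eqref{t1fdef}--\eqref{t2fdef} yields $\ts(F,\kfM)\upto\ts(F,\kf)$.

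In the case $\ts(F,\kf)=\infty$, given any constant $C$ pick $M$ with $\ts(F,\kfM)>2C$; then by the bounded case $\nr(F,G(n,\kfM))/n\pto\ts(F,\kfM)$, and therefore $\nr(F,G(n,\kf))\ge\nr(F,G(n,\kfM))>Cn$ whp. In the case $\ts(F,\kf)<\infty$, fix $\eps>0$ and choose $M$ with $\ts(F,\kf)-\ts(F,\kfM)<\eps^2$; writing $D_n=\nr(F,G(n,\kf))-\nr(F,G(n,\kfM))\ge 0$, the upper bound $\E\nr(F,G(n,\kf))\le\ts(F,\kf)n$ from Theorem~\ref{th_ssbd} together with the bounded-case identity $\E\nr(F,G(n,\kfM))=\ts(F,\kfM)n+O_M(1)$ gives $\E D_n\le\eps^2n+O_M(1)$, whence Markov yields $D_n/n\le\eps$ with probability at least $1-\eps-o(1)$. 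Combined with $\nr(F,G(n,\kfM))/n\pto\ts(F,\kfM)\in[\ts(F,\kf)-\eps^2,\ts(F,\kf)]$, this sandwiches $\nr(F,G(n,\kf))/n$ within $O(\eps)$ of $\ts(F,\kf)$ with probability at least $1-O(\eps)$; letting $\eps\to 0$ gives the desired convergence in probability.

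The main obstacle is the regularity-preservation claim for the coupling: ``regular'' is a property tying a copy to the \emph{global} list of atoms contributing to it, and verifying that new atoms in the tail cannot spoil regularity of an existing copy requires viewing copies at the level of edge-occurrences in the multigraph, not merely as edge-sets.
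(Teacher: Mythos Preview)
Your proposal is correct and follows essentially the same approach as the paper: truncate to $\kfM$, use monotone convergence for $\ts(F,\kfM)\upto\ts(F,\kf)$, and exploit the coupling monotonicity $\nr(F,G(n,\kfM))\le\nr(F,G(n,\kf))$ (which, as you rightly note, is justified by tracking copies at the level of edge-occurrences in the multigraph). The only difference is cosmetic: in the finite case the paper combines the whp lower bound from truncation directly with the unconditional expectation bound $\E\nr(F,G_n)\le\ts(F,\kf)n$ from \refT{th_ssbd} (valid for \emph{all} $\kf$, not just bounded) to deduce concentration, rather than bounding the difference $D_n$ via Markov as you do.
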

\begin{proof}
We consider the truncated kernel families $\kfM$. Since $\traw(F,\kf)$
is a sum of integrals of products of sums of integrals
of the kernels $\ka_{F'}$, by monotone convergence we have
$
 \traw(F,\kfM) \to \traw(F,\kf) \le\infty
$
as $M\to\infty$, and hence $\ts(F,\kfM)\to \ts(F,\kf)$.

If $\ts(F,\kf)=\infty$, choose $M$ so that $\ts(F,\kfM)>C$, and couple
$G_n'=G(n,\kfM)$ and $G_n$ in the natural way so that $G_n'\subseteq G_n$.
Since $\kfM$ is bounded, Theorem~\ref{th_ssbd} implies that $\nr(F,G_n')\ge Cn$ whp.
Since $\nr(F,G_n)\ge \nr(F,G_n')$, the result follows.

If $\ts(F,\kf)<\infty$, then given $\eps>0$, the truncation argument above shows
that $\nr(F,G_n)\ge (\ts(F,\kf)-\eps)n$ holds whp.
By the first statement of Theorem~\ref{th_ssbd}, $\E \nr(F,G_n)\le \ts(F,\kf)n$.
Combining these two bounds gives the result.
\end{proof}

Note that we do not directly control the variance of $\nr(F,G_n)$; as we shall
see in Section~\ref{sec_pl}, there are natural examples where $\nr(F,G_n)/n$
is concentrated about its finite mean even though its variance 
tends to infinity.

The very simplest case of \refT{th_reg} concerns edges; we stated this
as a separate result in the introduction.

\begin{proof}[Proof of \refT{Tedges}]
Since all copies of $K_2$ in $G_n=\gnk$ are regular (and direct),
$e(G_n)=n(K_2,G_n)=\nr(K_2,G_n)$, and taking $F=K_2$ in \refT{th_reg}
and using \eqref{etK2}
yields $e(G_n)/n\pto \xiek$, which is the first claim of
\refT{Tedges}.
It remains to show that $\E e(G_n)=\E\nr(K_2,G_n)\to\xie(\kf)$.
The lower bound follows from the first part,
since convergence in probability implies $\liminf_\ntoo \E e(G_n)/n\ge\xiek$, 
while \refT{th_ssbd} gives $\E e(G_n)/n\le \ts(K_2,\kf)=\xiek$, completing the proof.
\end{proof}

It is also easy to prove \refT{Tedges} directly, using 
truncations as in this section but avoiding many complications
present in
the general case.

By a {\em moment} of a kernel family $\kf$ we shall mean any integral of the form
\[
 \int_{\sss^d} \ka_{F_1}\ka_{F_2}\cdots\ka_{F_r} \dd\mu(x_1)\cdots\dd\mu(x_d),
\]
where $F_1,\ldots,F_r$ are not necessarily distinct, and each term $\ka_{F_i}$
is evaluated at some $|F_i|$-tuple of distinct $x_j$.
The proof of Theorem~\ref{th_ssbd} shows that for any connected $F$, $\E\nx(F,G(n,\kf))$
is bounded by a sum of moments of $\kf$.
This gives a very strong condition under which we can control $\nx(F,G(n,\kf))$.

\begin{theorem}\label{th_mf}
Let $\kf$ be a kernel family in which only finitely many kernels $\ka_F$ are
non-zero. Suppose also that all moments of $\kf$ are finite.
Then for any connected $F$, $\E\nx(F,G(n,\kf))=O(1)$,
and the conclusions of Theorem~\ref{th_reg} apply with $\nr(F,G_n)$ replaced by $n(F,G_n)$.
\end{theorem}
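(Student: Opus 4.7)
The plan is to bound the expected number of exceptional copies by an elaboration of the counting argument already used in the proof of \refT{th_ssbd}, and then deduce the second assertion by combining the $O(1)$ bound with \refT{th_reg}.

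First I would classify exceptional configurations. Fix a connected $F$ (so $k=1$) and consider a potential copy of $F$ in $G_n=G(n,\kf)$ arising from a collection of atoms $F_1',\dots,F_a'$, where $F_i = F_i'\cap F$ is the part of $F$ contributed by the $i$th atom. Let $s=|\bigcup_i V(F_i')|$. The probability (given vertex types) that a specified placement of the $s$ vertices produces these atoms is, up to bounded combinatorial factors, $\prod_{i=1}^a \ka_{F_i'}/n^{|F_i'|-1}$, and there are at most $n^s$ placements. Hence the expected contribution of this configuration to $n(F,G_n)$ is at most
\begin{equation*}
n^{\,s-\sum_i(|F_i'|-1)}\int_{\sss^{s}}\prod_{i=1}^{a}\ka_{F_i'},
\end{equation*}
where the integrand is a moment of $\kf$ in the sense defined before the theorem. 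For a regular copy one has $\sum_i(|F_i'|-1)=s-1$, so this is $O(n)$; for an exceptional copy (by \eqref{reg?} with $k=1$) one has $\sum_i(|F_i'|-1)\ge s$, so the contribution is $O(1)$ times a moment of $\kf$.

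Next I would sum over configurations. The configuration is encoded by (i) an edge-disjoint cover $F_1,\dots,F_a$ of $F$ by non-empty subgraphs (possibly sharing more than one vertex, and possibly disconnected), (ii) a choice of ambient atom $F_i'\supseteq F_i$ for each $i$, and (iii) the pattern of vertex identifications among the $F_i'$ outside of $V(F)$. Since only finitely many kernels $\ka_F$ are non-zero, the number of choices in (ii) is finite for each choice in (i), and since $|V(F)|$ is finite, the number of choices in (i) and (iii) is finite as well. Thus $\E\nx(F,G_n)$ is bounded by a finite sum of $O(1)$-times-moment terms; by hypothesis all these moments are finite, so $\E\nx(F,G_n)=O(1)$.

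For the final statement, Markov's inequality gives $\nx(F,G_n)=\Op(1)=\op(n)$. Combined with \refT{th_reg} applied to the connected graph $F$, which yields $\nr(F,G_n)/n\pto \ts(F,\kf)$ (the finite case) or $\nr(F,G_n)\ge Cn$ whp for every $C$ (the infinite case), and the identity $n(F,G_n)=\nr(F,G_n)+\nx(F,G_n)$, we obtain the same conclusions with $n(F,G_n)$ in place of $\nr(F,G_n)$. The only slightly delicate point is bookkeeping the combinatorial constants in step one, but this is routine once one notes that every factor $\aut(\cdot)$ introduced is itself an $O(1)$ quantity depending only on the fixed finite list of non-zero $\ka_F$ and on $F$ itself. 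No subtlety arises from unbounded kernels because the finiteness of all moments of $\kf$ directly controls every integral appearing in the sum.
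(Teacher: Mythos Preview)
Your proposal is correct and follows precisely the approach the paper intends: the paper's own proof reads ``This is essentially trivial from the comments above and Theorem~\ref{th_reg}. We omit the details,'' where ``the comments above'' state that $\E\nx(F,G_n)$ is bounded by a sum of moments of $\kf$. You have simply supplied those omitted details --- the classification of exceptional configurations, the observation that finitely many non-zero kernels and a fixed $F$ force the sum to be finite, and the final combination with \refT{th_reg} via Markov's inequality --- all exactly as the paper's sketch requires.
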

\begin{proof}
This is essentially trivial from the comments above and Theorem~\ref{th_reg}. We omit the details.
\end{proof}

Example~\ref{badP2} shows that some conditions are necessary to control
$\nx(F,G(n,\kf))$; we refer the reader to \eqref{badP2k} for the 
description of the kernel family in this case.
Plugging \eqref{badP2k} into \eqref{sK2}, in this case
we have $\sigma_{K_2}(x,y)=6d(x,y)^{\eps-1}+a$ for some constant $a$ (in fact,
$a=24\eps^{-1}2^{-\eps}$), and it easily follows that $\ts(P_2,\kf)<\infty$.
However, as shown in the discussion of that example, whp there is a vertex
with degree at least $n^{1-3\eps}$, and hence at least $n^{2-6\eps}$ copies of
$P_2$, which is much larger than $n$ if $\eps<1/6$. In this case the problem
is exceptional $P_2$s $ijk$ arising from atoms $ij\ell$ and $jk\ell$:
the corresponding moment
\[
 \int_{\sss^4} \ka_3(x_1,x_2,x_4)\ka_3(x_2,x_3,x_4)
\]
is infinite, due to the contribution from $d(x_2,x_4)^{2\eps-2}$.

Of course, not all moments contribute to $\E\nx(F,G_n)$; as we shall see in the next section,
it is easy to obtain results similar to Theorem~\ref{th_mf} under weaker assumptions in special cases.
Also, in general it may happen that $\nx(F,G_n)$ has infinite expectation (in the multigraph form),
but is nonetheless often small, i.e., that the large expectation comes from the small
probability of having a vertex in very many copies of $F$. Much more generally,
it turns out that when $\kf$ is integrable, whp all exceptional copies of $F$
sit on a rather small set of vertices.

\begin{theorem}\label{th_ess}
Let $\kf$ be an integrable kernel family and $F$ a connected graph, with $\ts(F,\kf)$ finite.
Let $G_n=G(n,\kf)$.

For any $\eps>0$, there is a $\delta>0$ such that whp {\em every} graph $G_n'$ formed
from $G_n$ by deleting at most $\delta n$ vertices has $n(F,G_n')\ge (\ts(F,\kf)-\eps) n$.

For any $\eps>0$ and any $\delta>0$, whp there is {\em some} graph $G_n'$
formed from $G_n$ by deleting at most $\delta n$ vertices such that
$n(F,G_n')\le (\ts(F,\kf)+\eps) n$.
\end{theorem}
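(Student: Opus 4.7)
The plan for both statements is to approximate the general integrable $\kf$ by a bounded truncation $\kfM$ (in the sense of \refD{Dbounded}) and to transfer the bounded-case bounds of \refT{th_ssbd} back to $\kf$. I will couple $G_n^M=G(n,\kfM)$ and $G_n=G(n,\kf)$ naturally so that $G_n^M\subseteq G_n$, with the complementary atoms having kernels in $\kf-\kfM$; monotone convergence gives $\ts(F,\kfM)\upto\ts(F,\kf)$ and $\ii{\kf-\kfM}\downto 0$ as $M\to\infty$.

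For the first (uniform) assertion, I will fix $M$ so large that $\ts(F,\kfM)\ge\ts(F,\kf)-\eps/3$. Writing $m_v$ for the number of copies of $F$ in $G_n^M$ containing vertex $v$, deleting any set $S$ of vertices destroys at most $\sum_{v\in S}m_v$ copies of $F$ in $G_n^M$, and Cauchy--Schwarz bounds this, uniformly in $S$, by $\sqrt{|S|}\bigl(\sum_v m_v^2\bigr)^{1/2}$. The main calculation is to show $\E\sum_v m_v^2=O(n)$: this sum rewrites as the weighted count $\sum_{F_1,F_2}|V(F_1)\cap V(F_2)|$ over ordered pairs of copies of $F$ in $G_n^M$, and grouping those that share at least one vertex by the isomorphism class of $F_1\cup F_2$ (a connected graph on at most $2|F|-1$ vertices, hence only finitely many classes) reduces the estimate to finitely many applications of \refT{th_ssbd} to the bounded $\kfM$. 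Choosing $\delta>0$ small enough, $\max_{|S|\le\delta n}\sum_{v\in S}m_v\le\eps n/3$ whp; together with the bounded-case lower bound $n(F,G_n^M)\ge(\ts(F,\kf)-2\eps/3)n$ whp (from \refT{th_ssbd} and the choice of $M$), and $n(F,G_n')\ge n(F,G_n^M[[n]\setminus S])$ since $G_n^M\subseteq G_n$, this yields the claim.

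For the second (existential) assertion I will use an arbitrary-$\alpha$ argument. Given $\alpha>0$, pick $M$ with $\ii{\kf-\kfM}<\alpha\delta/2$. Letting $V_n^M$ be the set of vertices incident to some atom of $G_n\setminus G_n^M$, we have $|V_n^M|\le\sum_{F'}|F'|N^{\mathrm{out}}_{F'}$, where $N^{\mathrm{out}}_{F'}$ counts the atoms of type $F'$ in $G_n\setminus G_n^M$; hence $\E|V_n^M|\le n\ii{\kf-\kfM}<\alpha\delta n/2$, so Markov gives $\Pr(|V_n^M|>\delta n)<\alpha$ for $n$ large. On the event $|V_n^M|\le\delta n$, taking $G_n'=G_n[[n]\setminus V_n^M]$ removes every atom of $G_n\setminus G_n^M$, so $G_n'\subseteq G_n^M$; by \refT{th_ssbd} and $\ts(F,\kfM)\le\ts(F,\kf)$, whp $n(F,G_n')\le n(F,G_n^M)\le(\ts(F,\kf)+\eps)n$. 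Since $\alpha>0$ was arbitrary, such a $G_n'$ exists whp.

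The main obstacle I expect is the second-moment bound in Part~1: identifying which auxiliary graphs $F_1\cup F_2$ can appear and checking that each contributes $O(n)$ to $\E\sum_v m_v^2$ via \refT{th_ssbd}. This uses crucially that $F$ is connected, so that $F_1\cup F_2$ is connected whenever $F_1$ and $F_2$ share a vertex; once this is in place, the rest of the argument reduces to choosing $M$ and $\delta$ in the correct order.
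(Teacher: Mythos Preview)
Your approach is essentially the same as the paper's for both parts: truncate to a bounded $\kfM$, control the second part by deleting the vertices meeting atoms of $G_n\setminus G_n^M$ via Markov on $\E|V_n^M|\le n\,\ii{\kf-\kfM}$, and control the first part by a Cauchy--Schwarz bound on $\sum_{v\in S}m_v$ in terms of $\sum_v m_v^2$, the latter being a count of pairs of copies of $F$ sharing a vertex, hence bounded via \refT{th_ssbd} applied to the finitely many possible connected unions $F_1\cup F_2$. The paper phrases the first part with homomorphism counts $a_i$ (one per vertex of $F$) rather than your $m_v$, but the two are equivalent up to constants.

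There is one genuine, if easily repaired, slip. You state the key estimate as $\E\sum_v m_v^2=O(n)$ and then deduce that, for $\delta$ small, $\max_{|S|\le\delta n}\sum_{v\in S}m_v\le\eps n/3$ whp. The expectation bound alone does not give this: Cauchy--Schwarz yields $\sum_{v\in S}m_v\le\sqrt{\delta n\sum_v m_v^2}$, and from $\E\sum_v m_v^2\le Cn$ Markov only gives $\Pr\bigl(\sum_v m_v^2>Kn\bigr)\le C/K$, a bound that does not tend to $0$ as $n\to\infty$ for any fixed $K$ (and hence any fixed $\delta$). What you need is the whp bound $\sum_v m_v^2\le Cn$ for some constant $C$; this is exactly what the last conclusion of \refT{th_ssbd} (namely $n(H,G_n^M)=\ts(H,\kfM)n(1+O_p(n^{-1/2}))$ for bounded $\kfM$ and connected $H$) provides when applied to each of the finitely many isomorphism types $H=F_1\cup F_2$. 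So cite the concentration statement of \refT{th_ssbd}, not merely the expectation bound, and the argument goes through.
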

Together the statements above may be taken as saying that $G_n$ contains
essentially $(\ts(F,\kf)+\op(1))n$ copies of $F$, where `essentially' means that
we may ignore $o(n)$ vertices. In other words, the `bulk' of $G_n$
contains this many copies of $F$, though a few exceptional vertices may
meet many more copies.
\begin{proof}
We start with the second statement, since it is more or less immediate.
Indeed, writing $\ii{\kf}$
for $\sum_{F\in \F} |F|\int_{\sss^{|F|}} \ka_F$,
and considering truncations $\kfM$ as usual, from monotone convergence
we have $\ii{\kfM}\upto \ii{\kf}$ as $M\to\infty$.
Let $\eps>0$, $\delta>0$ and $\eta>0$ be given.
Since $\kf$ is integrable, i.e., $\ii{\kf}<\infty$, there is some $M$
such that $\ii{\kfM}\ge \ii{\kf}-\delta\eta/2$.
Coupling $G_n^M=G(n,\kfM)$ and $G_n=G(n,\kf)$ in the usual way, let us call
a vertex {\em bad} if it meets an atom present in $G_n$ but not $G_n^M$.
The expected number of bad vertices is at most the expected sum of the sizes
of the extra atoms, which is at most
$n(\ii{\kf}-\ii{\kfM})\le \delta\eta n/2$.
Hence the probability that there are more than $\delta n$ bad vertices is at most $\eta/2$.

Deleting all bad vertices from $G_n$ leaves a graph $G_n'$ with at most $n(F,G_n^M)$ copies
of $F$. Applying Theorem~\ref{th_ssbd}, this number is at most
$(\ts(F,\kfM)+\eps)n \le (\ts(F,\kf)+\eps)n$
whp, so we see that if $n$ is large enough, then with probability at least $1-\eta$
we may delete at most $\delta n$ vertices to leave $G_n'$ with at most $(\ts(F,\kf)+\eps)n$
copies of $F$, as required.

Turning to the first statement, we may assume without loss of generality that $\kf$
is bounded. Indeed, there is some truncation $\kfM$ with $\ts(F,\kfM)\ge \ts(F,\kf)-\eps/2$,
and taking $G(n,\kfM)\subset G(n,\kf)$ as usual, it suffices to prove the same statement
for $G(n,\kfM)$ with $\eps$ replaced by $\eps/2$.
Assuming $\kf$ is bounded,
then by Theorem~\ref{th_ssbd} we have $n(F,\kf)\ge (\ts(F,\kf)-\eps/2)n$ whp, 
so it suffices to prove that if $\kf$ is bounded and $\eps>0$, then there is some
$\delta>0$ such that whp any $\delta n$ vertices of $G_n=G(n,\kf)$ meet
at most $\eps n$ copies of $F$. 

Let $v$ be a fixed vertex of $F$, and for $1\le i\le n$ let $a_i$ denote the number of
homomorphisms from $F$ to $G_n$ mapping $v$ to vertex $i$.
Let $F'$ be the graph formed from two copies of $F$ meeting only at $v$.
Then there are exactly $a_i^2$ homomorphisms from $F'$ to $G_n$ mapping $v$ to $i$,
so in total there are $\sum a_i^2$ homomorphisms from $F'$ to $G_n$.
Now the image of any homomorphism from $F'$ to $G_n$ is a connected subgraph $F''$
of $G_n$, and each such subgraph is the image of $O(1)$ homomorphisms.
Applying Theorem~\ref{th_ssbd} to each of the $O(1)$ possible isomorphism types of $F''$,
it follows that there is some constant $C$ such that, whp,
\[
 \sum_i a_i^2 = \hom(F',G_n) \le Cn.
\]
When the upper bound holds,
given any set $S\subset [n]$ with $|S|\le \delta n$, by the Cauchy--Schwartz inequality
we have
\[
 \sum_{i\in S} a_i \le \sqrt{|S|}\sqrt{\sum_i a_i^2} \le \sqrt{\delta n}\sqrt{Cn}=\sqrt{C\delta}n.
\]

Repeating the argument above for each vertex $v$ of $F$
and summing,
we see that there is some $C'<\infty$
(given by the sum of at most $|F|$ constants corresponding to $\sqrt{C}$ above)
such that whp
for any $\delta>0$, and any set $S$ of at most $\delta n$ vertices of $G_n$,
there are at most $C'\sqrt{\delta}n$ homomorphisms from $F$ to $G_n$ mapping any vertex
of $F$ into $S$. This condition implies that $S$ meets at most $C'\sqrt{\delta}n$ copies
of $F$, so choosing $\delta$ such that $C'\sqrt{\delta}<\eps$,
we see that whp any $\delta n$ vertices meet at most $\eps n$ copies of $F$.
As noted above, the first statement of the theorem follows.
\end{proof}

\section{A power-law graph with clustering}\label{sec_pl}

Our aim in this paper has been to introduce a very general family of sparse
random graph models, showing that despite the generality, the models are
still susceptible to mathematical analysis. The question of which special
cases of the model may be relevant in applications is a very broad one,
and not our focus. Nevertheless, in the light of the motivation of the model,
we shall investigate one special case. We should like to show that,
with an appropriate choice of kernel family, our model
gives rise to graphs with power-law degree distributions, with various ranges
of the degree exponent,
the clustering coefficient (see~\eqref{clusteringcoeff}),
and the mixing coefficient (see~\eqref{mixingcoeff}).
We achieve this in the simplest possible way, considering a `rank 1'
version of the model in which we add only edges and triangles. We do not claim
that this particular model is appropriate for any particular real-world example;
nevertheless, it shows the potential of our model to produce
graphs that are similar to real-world graphs, where similarity is measured
by the values of these important and much studied parameters.

Throughout this section we fix three parameters, $\alpha>1$, and $A,B\ge 0$ 
with $A+B>0$.
We consider one specific kernel family $\kf$ on $\sss=(0,1]$ with
$\mu$ Lebesgue measure. Our kernel family has only two non-zero kernels,
$\ka_2$, corresponding to edges, and $\ka_3$ to triangles, with
\[
 \ka_2(x,y) = Ax^{-1/\alpha}y^{-1/\alpha}
\]
and
\[
 \ka_3(x,y,z) = Bx^{-1/\alpha}y^{-1/\alpha}z^{-1/\alpha}.
\]
We could of course consider many other possible functions, but these seem 
the simplest and most natural for our purposes. It would be straightforward
to carry out computations such as those that follow with each
of the $\alpha$s above replaced by a different constant, for example,
although we should symmetrize the kernels in this case. However,
one of these exponents would determine the power law, and it seems most natural to
take them all equal.

For convenience, we define
\begin{equation}\label{gbk}
   \gbx_k=\int_{0}^1x^{-k/\alpha}\dd x=
   \begin{cases}
\frac{\alpha}{\alpha-k},& \alpha>k, \\
\infty, & \alpha\le k.	 
   \end{cases}
\end{equation}
In particular, $\gb={\alpha}/({\alpha-1})$.
We then have
\[
 \int_{\sss^2} \ka_2 = A\gb^2 \quad
 \hbox{ and }\quad
 \int_{\sss^3} \ka_3 = B\gb^3,
\]
so $\kf$ is integrable.
Also, for the asymptotic edge density in \refT{Tedges},
\begin{equation}\label{xiex}
  \xiek= \int_{\sss^2} \ka_2 +3 \int_{\sss^3} \ka_3 
= A\gb^2+ 3B\gb^3.
\end{equation}
In the following subsections we apply our general results to determine various
characteristics of this particular random graph $G_n=G(n,\kf)$. 

\subsection{Degree distribution}
From \eqref{v2} and symmetry of $\ka_2$ and $\ka_3$ we see that
\[
 \la_{K_2,1}(x)=\la_{K_2,2}(x) = \int_\sss \ka_2(x,y)\dd\mu(y) = A\gb x^{-1/\alpha},
\]
while for $j=1,2,3$,
\[
 \la_{K_3,j}(x)=\int_{\sss^2} \ka_3(x,y,z)\dd\mu(y)\dd\mu(z) = B\gb^2x^{-1/\alpha}.
\]
Since an edge contributes 1 to the degree of each endvertex, while a
triangle contributes 2 to the degrees of its vertices, for each $x$,
the measure $\la_x$ defined by \eqref{lax} is given by
\[
 \la_x = 2A\gb x^{-1/\alpha} \delta_1 + 3B\gb^2 x^{-1/\alpha} \delta_2.
\]
Theorem~\ref{Tdegree} then tells us that the degree distribution of $G_n=G(n,\kf)$
converges to the mixed compound Poisson distribution $\mcpol$, where
$\LL$ is the random measure corresponding to $\la_x$ with $x$ chosen uniformly from
$(0,1]$.

Note that if $B=0$, then the limiting degree distribution is mixed Poisson, while
if $A=0$, almost all degrees are even and the degrees divided by $2$
have a mixed Poisson distribution.

For the power law, note that the mean $\la(x)$ of $\la_x$ is simply
\[
 \la(x) = (2A\gb+6B\gb^2)x^{-1/\alpha} = c x^{-1/\alpha},
\]
where $0<c=2A\gb+6B\gb^2=2\xiek/\gb<\infty$
is a constant depending on $A$, $B$ and $\alpha$.
Choosing $x$ randomly from $(0,1]$, for any $k>c$ we have
\[
 \Pr(\la(x)>k) = \Pr(x<(k/c)^{-\alpha}) = (k/c)^{-\alpha},
\]
so the distribution of $\la(x)$ has a power-law tail. Using the concentration
properties of Poisson distributions with large means, arguing as in the proof of
Corollary 13.1 of~\cite{BJR}, it follows easily that
\[
 \Pr(\mcpol >k )\sim (k/c)^{-\alpha}
\]
as $k\to\infty$, so the asymptotic degree distribution does indeed have a power-law
tail with (cumulative) exponent $\alpha$.

Let $d_k=\Pr(\mcpol=k)$, so by Theorem~\ref{Tdegree}, the asymptotic
fraction of vertices with degree $k$ is simply $d_k$.
If $A>0$ then it is not hard to check that in fact
\begin{equation}\label{dk}
 d_k\sim c'k^{-\alpha-1}
\end{equation}
as $k\to\infty$, where $0<c'=\alpha c^{\alpha}<\infty$,
so the degree
distribution is power-law in this stronger sense.
If $A=0$, then $d_k=0$ if $k$ is odd, but \eqref{dk} still holds for even
$k$, for a different (doubled) constant $c'$.

\subsection{The phase transition and the giant component}\label{ss_egg}
From~\eqref{kaedef},
we have $\kae(x,y)=(2A+6B\gb)x^{-1/\alpha}y^{-1/\alpha}$, which we may rewrite
as $\kae(x,y)=\psi(x)\psi(y)$, where
\[
 \psi(x) = (2A+6B\gb)^{1/2} x^{-1/\alpha}.
\]
By Theorems~\ref{th1} and~\ref{th2}, the largest component of $G_n$ is of
size $\rho(\kf)n+\op(n)$, and there is a giant component, i.e., $\rho(\kf)>0$,
if and only if $\norm{\Tk}>1$.
In this case $\kae$ is `rank 1' in the terminology of~\cite{BJR}, and we
have
\[
 \norm{\Tk}=\tn{\psi}^2 = (2A+6B\gb)\gbb.
\]
Hence, fixing $\alpha>2$ and thus $\gb$ and $\gbb$, there is a giant
component if and only if
\begin{equation}\label{supcrit}
 2A+6B\alpha/(\alpha-1)>(\alpha-2)/\alpha.
\end{equation}

Turning to the normalized size $\rho(\kf)$ of the giant component,
\refT{unique} allows us to calculate this in terms of the solution
to a functional equation. Usually this is intractable, but for the
special $\kf$ we are considering this simplifies greatly, as in the rank 1 case
of the edge-only model; see Section 16.4 of~\cite{BJR}, or Section 6.2 of~\cite{Rsmall}.
Indeed, writing $\rho(x)$ for the survival probability of $\bpk(x)$,
from \eqref{Sk} we have
\begin{multline*}
 \Sk(\rho)(x) = \int_0^1 2Ax^{-1/\alpha}y^{-1/\alpha}\rho(y)\dd y \\
  + \int_0^1\int_0^1 3Bx^{-1/\alpha}y^{-1/\alpha}z^{-1/\alpha}(\rho(y)+\rho(z)-\rho(y)\rho(z))\dd y \dd z,
\end{multline*}
which simplifies to
\[
 \Sk(\rho)(x) =  x^{-1/\alpha}(2AC+6B\gb C-3BC^2),
\]
where
\begin{equation}\label{Ceq}
 C=\int_0^1 x^{-1/\alpha} \rho(x)\dd x.
\end{equation}
By Lemma~\ref{l_max}, we have $\rho(x)=1-\exp(-\Sk(\rho)(x))$, so
\begin{equation}\label{rsol}
 \rho(x) = 1-\exp\bb{-(2AC+6B\gb C-3BC^2) x^{-1/\alpha} }.
\end{equation}
Although we defined $C$ in terms of $\rho$, we can view $C$ as an unknown constant,
define $\rho$ by \eqref{rsol}, and substitute back into \eqref{Ceq}.
The function $\rho$ then solves \eqref{fS} if and only if $C$ solves
\begin{equation}\label{ceq}
 C = \int_0^1 x^{-1/\alpha} 
\left(1-\exp\bigpar{-\bigpar{(2A+6B\gb)C -3BC^2}x^{-1/\alpha}}\right),
\end{equation}
and every solution to \eqref{fS} arises in this way. In particular,
by Theorems~\ref{unique} and~\ref{th2}, there is a positive solution only
in the supercritical case (when \eqref{supcrit} holds), and that
solution is then unique; $C=0$ is always a solution.
Transforming the integral using the substitution $y=x^{-1/\alpha}$, 
one can rewrite the right hand
side of \eqref{ceq} in terms of an incomplete gamma function, although
it is not clear this is informative. The point is that the form of $\rho(x)$
is given by \eqref{rsol}, and the constant can in principle be found
as the solution to an equation, and can very easily be found numerically
for given values of $A$, $B$ and $\alpha$.

\subsection{Subgraph densities}\label{ss_counts}
In the following subsections we shall need expressions for $\ts(F,\kf)$
for various small graphs $F$, where $\ts(F,\kf)$, defined by \eqref{t1fdef} and \eqref{t2fdef},
may be thought of as the asymptotic density of copies of $F$ in the kernel
family $\kf$.

We start with direct copies of $F$. Since all atoms are edges or triangles,
the only graphs $F$ that can be produced directly are edges, triangles,
and $P_2$s, i.e., paths with 2 edges.

Putting the specific kernels $\ka_2$ and $\ka_3$ into the formulae \eqref{sK2}
and \eqref{sK3} from the previous section, we have
\begin{equation*}
 \sigma_{K_2}(x,y) = (2A+6B\gb)x^{-1/\alpha}y^{-1/\alpha},
\end{equation*}
and
\[
 \sigma_{P_2}(x,y,z) = 6Bx^{-1/\alpha}y^{-1/\alpha}z^{-1/\alpha},
\]
while
\[
  \sigma_{K_3}(x,y,z) =6\ka_{K_3}(x,y,z) = 6Bx^{-1/\alpha}y^{-1/\alpha}z^{-1/\alpha}.
\]

Edges may be formed only directly, so either from \eqref{t1fdef} and \eqref{t2fdef}
or from \eqref{etK2}, we have
\[
 \ts(K_2,\kf) = \frac{1}{2} \int \sigma_{K_2}(x,y)\dd\mu(x)\dd\mu(y) = A\gb^2+3B\gb^3,
\]
which agrees, as it should, with \eqref{xiex}.
Since a triangle is 2-connected, it has no non-trivial tree decomposition, and
\eqref{t1fdef} and \eqref{t2fdef} give
\[
 \ts(K_3,\kf) = \frac{1}{6}\int_{\sss^3} 6\ka_3(x,y,z)\dd\mu(x)\dd\mu(y)\dd\mu(z)= B\gb^3,
\]
which may also be seen by noting that the only regular copies of a triangle
are those directly corresponding to $\ka_3$.

A copy of $P_2$ may be formed by a single triangular atom (a direct copy),
but may also be formed by two edges from
different atoms.
Hence, as in \refE{EtP2},
\begin{equation*}
  \begin{split}
 \ts(P_2,\kf) &= \frac{1}{2} \int (\sigma_{P_2}(x,y,z)+\sigma_{K_2}(x,y)\sigma_{K_2}(y,z))\dd\mu(x)\dd\mu(y)\dd\mu(z) \\
 &= \frac{1}{2} \left(6B\gb^3 + \int (2A+6B\gb)^2x^{-1/\alpha}y^{-2/\alpha}z^{-1/\alpha} \dd\mu(x)\dd\mu(y)\dd\mu(z)\right) \\
 &= 3B\gb^3 + \frac{(2A+6B\gb)^2\gb^2}{2}\gbb.
  \end{split}
\end{equation*} 
In particular, if $\alpha\le 2$ then  $\ts(P_2,\kf)$ is infinite.

For $S_3=K_{1,3}$, the star with three edges, there are two types of tree-decompositions: three edges or one
edge and one copy of $P_2$, the latter occurring in 3 different ways.
(There are no direct copies.) 
Hence,
\begin{equation*}
  \begin{split}
 \traw(S_3,\kf)& = \int \sigma_{K_2}(x_1,x_2)\sigma_{K_2}(x_1,x_3)\sigma_{K_2}(x_1,x_4)
 + 3\int \sigma_{P_2}(x_2,x_1,x_3)\sigma_{K_2}(x_1,x_4)	
\\&
= (2A+6B\gb)^3\gb^3\gbbb + 18B(2A+6B\gb)\gb^3\gbb
  \end{split}
\end{equation*}
and thus
\begin{equation*}
\ts(S_3,\kf)
= \frac16{(2A+6B\gb)^3} \gb^3\gbbb + 3B(2A+6B\gb)\gb^3\gbb.
\end{equation*}

Finally, for $P_3$, there are again two types of tree-decompositions: three edges or one
edge and one copy of $P_2$, the latter now occurring in 2 different ways.
Hence,
\begin{equation*}
  \begin{split}
 \ts(P_3,\kf)& =\frac12 \int \sigma_{K_2}(x_1,x_2)\sigma_{K_2}(x_2,x_3)\sigma_{K_2}(x_3,x_4)
 + \frac22\int \sigma_{P_2}(x_1,x_2,x_3)\sigma_{K_2}(x_3,x_4)	
\\&
= \frac12(2A+6B\gb)^3\gb^2\gbb^2 + 6B(2A+6B\gb)\gb^3\gbb.
  \end{split}
\end{equation*}

As we shall see, the counts above are enough to calculate two more interesting
parameters of the graph $G_n=G(n,\kf)$.

\subsection{The clustering coefficient}

The {\em clustering coefficient} $C(G)$ of a graph $G$ was introduced
by Watts and Strogatz~\cite{WS} as a measure of the extent to which
neighbours of a random vertex in $G$ tend to be joined directly to each other.
After the degree distribution, it is one of the most studied parameters of real-world networks.
As discussed in~\cite{BRsurv}, for example,
there are several different definitions of such clustering coefficients.
One of these turns out to be
most convenient for mathematical analysis, and is also very natural;
following~\cite{BRsurv}, we call this coefficient $C_2(G)$.
(Hopefully there will be no confusion
with our earlier use of $C_2(G)$ for the number of vertices in the 2nd largest component.)
The coefficient $C_2(G)$ may be defined as a certain weighted average
of the `local clustering coefficients' at individual vertices, but is also simply given by
\begin{equation}\label{clusteringcoeff}
 C_2(G) = \frac{ 3 n(K_3,G)}{n(P_2,G)},  
\end{equation}
a ratio that is easily seen to lie between $0$ and $1$.

Now from above we have $\ts(K_3,\kf) = B\gb^3<\infty$. Hence,
by Theorem~\ref{th_reg},
\[
 \nr(K_3,G_n) = B \gb^3 n +\op(n),
\]
where, as usual, $G_n=G(n,\kf)$.
We shall return to exceptional copies of $K_3$ shortly.

If $\alpha\le 2$ then  $\ts(P_2,\kf)$ is infinite,
and $G_n$ will whp contain more than $O(n)$ copies
of $P_2$. Note that this is to be expected given the exponent of the 
asymptotic degree
distribution, since in this case the expected square degree is infinite.

From now on we suppose that $\alpha>2$, so $\ts(P_2,\kf)$ is finite.
Suppose for the moment that exceptional copies of $P_2$ and $K_3$ are negligible, i.e.,
that
\begin{equation}\label{neg}
 \nx(P_2,G_n),\,  \nx(K_3,G_n) = \op(n).
\end{equation}
By Theorem~\ref{th_reg}, we have $\nr(P_2,G_n)/n=\ts(P_2,\kf)+\op(1)$ and
$\nr(K_3,G_n)/n=\ts(K_3,\kf)+\op(1)$, so it follows that
\[
 C_2(G_n) = \frac{3\ts(K_3,\kf)}{\ts(P_2,\kf)}+\op(1) = c_2(A,B,\alpha)+\op(1)
\]
where, from the formulae in \refSS{ss_counts},
\begin{equation}\label{cAB}
 c_2(A,B,\alpha) = \frac{3B\gb^3}{3B\gb^3+2(A+3B\gb)^2\gb^2\gbb},
\end{equation}
with $\gb,\gbb$ given by \eqref{gbk}.
It follows that with the degree exponent $\alpha>2$ fixed,
this special case of our model can achieve any possible value of the clustering coefficient,
with the trivial exception of $1$ (achieved only by graphs that are vertex disjoint unions of cliques).
Indeed, $c_2(A,0,\alpha)=0$ for any $A$, while taking $A=0$ we have
\[
 c_2(0,B,\alpha) = \frac{1}{1+6B\gb\gbb},
\]
which is decreasing as a function of $B$,
and tends to $1$ as $B\to0$ and to $0$ as $B\to\infty$.

Let us note in passing that by Theorem~\ref{th_reg}, if $2<\alpha\le 4$ then
$\nr(P_2,G_n)/n$ is concentrated around its finite mean even though its
variance, which involves the expected 4th power of the degree
of a random vertex, tends to infinity.

So far we considered only regular copies of $P_2$ and $K_3$;
we now turn our attention to exceptional copies. Unfortunately, for any $\alpha$, some
moment of our kernel is infinite, so Theorem~\ref{th_mf} does not apply. However,
it is easy to describe the set of moments relevant to the calculation of $\E \nx(F,G_n)$
for the graphs $F$ we consider.

Suppose that $F$ is an exceptional triangle (or $P_2$; the argument is then almost identical)
in $G_n=G(n,\kf)$. Since $F$ has (at most) three edges, there are at most 3 atoms
$F_i$ contributing edges to $F$. Let $H$ be the union of these atoms,
considered as a multigraph. For example, if $F$ is the triangle $abc$, then $H$ might
consist of the union of the three triangles $abd$, $bcd$, and $cad$. In some sense this
will turn out to be the `worst' case.

Let us fix the isomorphism type of $H$, defined in the obvious way. Let
$h$ be the total number of vertices in $H$, and write $r=\sum_i (|F_i|-1) -(h-1)$
for the `redundancy' of $H$. Since $F$ is exceptional, $r\ge 1$.
The expected number of exceptional $F$ arising in this way is exactly
$n\fall{h}n^{-\sum_i (|F_i|-1)}$ times a certain integral of products of $\ka_2$ and
$\ka_3$. From the form of $\ka_2$ and $\ka_3$, we may write this as
\[
 \frac{n\fall{h}}{n^{r+h-1}} \int_{\sss^h} x_1^{-n_1/\alpha}\cdots x_h^{-n_h/\alpha}\dd\mu(x_1)\cdots \dd\mu(x_h),
\]
where $n_i$ is the number of the atoms $F_j$ that contain the $i$th vertex of $H$.
The initial factor is at most $n^{1-r}\le 1$, while the integral is finite unless
$n_i\ge \alpha$ for some $i$. Since $H$ is made up of at most 3 atoms $F_j$,
we always have
$n_i\le 3$, so if $\alpha>3$ then the relevant integrals (i.e., the relevant moments)
are finite, and we have $\E\nx(K_3,G_n), \E\nx(P_2,G_n)=O(1)$, which certainly
implies \eqref{neg}.

In fact, we do not need to assume that $\alpha>3$. Suppose that $2<\alpha\le 3$.
Then in the multigraph version of the model,
$\E\nx(K_3,G_n)=\infty$. (Consider, for example, three triangles sitting on 4 vertices as above.)
On the other hand, this does not mean that $\nx(K_3,G_n)$ is often large.
Indeed, when we choose our vertex types uniformly from $(0,1]$, whp there is no vertex
whose type $x$ is at most $\delta=1/(n\log n)$, say.
Conditioning on this very likely event $\cA$, we may consider the restrictions
of $\ka_2$ and $\ka_3$ to $(\delta,1]^2$ and $(\delta,1]^3$, respectively.
Now the expected number of copies of some pattern $H$ is at most a constant times
\[
 n^{1-r} \left(\int_{\delta}^1 x^{-3/\alpha} \dd x\right)^s,
\]
where $s$ is the number of vertices $i$ of $H$ with $n_i=3$.
Since
the graph $K_3$ (or $P_2$) we are trying to form has maximum degree 2, 
every vertex of $H$ with $n_i=3$ corresponds to a redundancy, so we always
have $r\ge s$.
Up to constants and a power of $\log n$ the integral is $n^{(3-\alpha)/\alpha}\le \sqrt{n}$,
and it follows that
\[
 \E(\nx(K_3,G_n)\mid \cA) = O(\sqrt{n}) = o(n).
\]
Since $\Pr(\cA)=1-o(1)$, it follows that $\nx(K_3,G_n)=\op(n)$, even though
its expectation would not suggest this. The same holds for $\nx(P_2,G_n)$,
so we see that \eqref{neg} does indeed hold for any $\alpha>2$,
and the clustering coefficient is indeed concentrated about $c_2(A,B,\alpha)$.

\subsection{The mixing coefficient}\label{ss_mix}
Another interesting parameter of real networks is
the extent to which the degrees of the two ends of a randomly chosen edge
tend to correlate; positive correlation is known as {\em assortative mixing},
and negative correlation as {\em disassortative mixing}.
To define this precisely, let $G$ be any graph, and let $vw$ be an edge of $G$ chosen
uniformly at random. More precisely, let $(v,w)$ be chosen uniformly
at random from all $2e(G)$ ordered pairs corresponding to edges of $G$.
Let $D_v$ and $D_w$ denote the degrees of $v$ and $w$; we view these as random variables.
Since the events $\{v=v_1, w=v_2\}$ and $\{v=v_2, w=v_1\}$ have the same
probability, the random vertices $v$ and $w$ have the same distribution,
so $D_v$ and $D_w$ have the same distribution.

Let
\begin{equation}\label{mixingcoeff}
 a(G) = \frac{ \Cov(D_v,D_w) }{ \sqrt{ \Var(D_v)\Var(D_w) } } = \frac{\Cov(D_v,D_w)}{\Var(D_v)}.
\end{equation}
Here $G$ is fixed, and all expectations are with respect to the random choice
of $(v,w)$. Thus $a(G)$ is simply the correlation
coefficient between the degrees of the two ends of  a randomly chosen edge,
so $-1\le a(G)\le 1$, and $a(G)>0$ corresponds to assortative mixing and $a(G)<0$ to disassortative
mixing.
This mixing coefficient was introduced by Callaway, Hopcroft, Kleinberg, Newman and Strogatz~\cite{CHKNS},
building on work of Krapivsky and Redner~\cite{KR}, and has been studied by many people,
for example Newman~\cite{Newman}.
In~\cite{CHKNS}, $a(G)$ is denoted $\rho(G)$; we avoid this notation as it clashes
with our notation for the survival probability of a branching process.

Fortunately, we need no new theory to evaluate $a(G)$ for $G=G(n,\kf)$, since $a(G)$ can
be expressed in terms of small subgraph counts.
More precisely, for any graph $G$,
\[
 \E(D_v-1) = \frac{1}{2e(G)}\sum_i\sum_{j\sim i} ( d_i-1) = \frac{1}{2e(G)}\sum_i d_i(d_i-1)
 = \frac{n(P_2,G)}{e(G)},
\]
where $i$ runs over all vertices of $G$, then $j$ over all neighbours of $i$,
and $d_i$ is the degree of vertex $i$ in $G$.
Also,
\[
 \E\bigpar{(D_v-1)(D_w-1)} = \frac{1}{2e(G)}\sum_i \sum_{j\sim i} (d_i-1)(d_j-1) 
 = \frac{2n(P_3,G)+6n(K_3,G)}{2e(G)},
\]
so
\[
 \Cov(D_v,D_w)=\Cov(D_v-1,D_w-1) = \frac{\bigpar{n(P_3,G)+3n(K_3,G)}e(G)-n(P_2,G)^2}{e(G)^2}.
\]
Also,
\[
 2e(G) \E\bigpar{(D_v-1)(D_v-2)} = \sum_i \sum_{j\sim i} (d_i-1)(d_i-2) = \sum_i d_i(d_i-1)(d_i-2) = 6n(S_3,G),
\]
where $S_3=K_{1,3}$ is the star with $3$ edges. 
Thus
\begin{align*}
 \Var(D_v) = \Var(D_v-1) &= \E\bigpar{(D_v-1)(D_v-2)}+\E(D_v-1)-(\E(D_v-1))^2 \\
 &= \frac{3n(S_3,G)e(G)+n(P_2,G)e(G)-n(P_2,G)^2}{e(G)^2}.
\end{align*}
Hence
\begin{equation}  \label{aG}
 a(G) = 
\frac{\bigpar{n(P_3,G)+3n(K_3,G)}e(G)-n(P_2,G)^2}
{3n(S_3,G)e(G)+n(P_2,G)e(G)-n(P_2,G)^2}.
\end{equation}
In well-behaved cases, for example for bounded kernel families, it
follows from our results here (Theorems \ref{th_ssbd}--\ref{th_mf})
that if $G_n=G(n,\kf)$, then
\begin{equation}\label{aGn}
 a(G_n) = a(\kf)+\op(1),
\end{equation}
where
\begin{equation}\label{aka}
 a(\kf) = \frac{\ts(P_3,\kf)\xiek+3\ts(K_3,\kf)\xiek-\ts(P_2,\kf)^2}{3\ts(S_3,\kf)\xiek+\ts(P_2,\kf)\xiek-\ts(P_2,\kf)^2},
\end{equation}
with $\xiek=\ts(K_2,\kf)$; see \eqref{etK2}.

Returning to our present specific example,
substituting in the expressions for $\ts(\cdot,\kf)$ in \refSS{ss_counts},
the ratio \eqref{aka} turns out to be
\begin{equation}\label{akaex}
 a(\kf) = \frac{3AB\gb^5}{\bigpar{4 \xb^4 (\gb\gbbb-\gbb^2) +2(A+6B\gb)\xb^2\gbb+3AB\gb}\gb^4},
\end{equation}
where $\xb=(A+3B\gb)=\xie(\kf)/\gb^2$.
Let us make a few comments on these expressions.

Firstly,
the coefficients $\ts(K_2,\kf)$,
$\ts(P_2,\kf)$, $\ts(K_3,\kf)$ and $\ts(P_3,\kf)$ are finite for all
$\alpha>2$, while
$\ts(S_3,\kf)$ is finite if and only if $\alpha>3$.
For the numerator, 
one can argue for $P_3$ as for $P_2$ and $K_3$ above to show that the number of
exceptional copies of $P_3$ is $\opn$ and thus negligible for every
$\ga>2$, and hence 
$n(P_3,G_n)=\ts(P_3,\kf)n+\opn$ by \refT{th_reg}. Consequently, the numerator in
\eqref{aG} (with $G=G_n$) divided by $n^2$ converges in probability to
the numerator in \eqref{aka}, and this limit is finite.
For $\alpha> 3$, one
can argue in the same way to show that the number of exceptional
copies of $S_3$ is negligible, so $n(S_3,G_n)=\ts(S_3,\kf)n+\opn$ and
\eqref{aGn} does indeed hold.
For $\alpha\le 3$, when $\ts(S_3,\kf)=\infty$, 
Theorem~\ref{th_reg} implies that $n(S_3,\kf)/n\pto\infty$, 
so in this case $a(G_n)\pto 0=a(\kf)$, for the not very interesting reason
that $\Var(D_v)$ is unbounded while $\Cov(D_v,D_w)$ is not. 
In any case, we have shown that \eqref{aGn} holds in our example for every
$\ga>2$.

Secondly, we see that $0\le a(\kf)<\infty $ for every $\ga>2$, with
$a(\kf)>0$ whenever $\ga>3$ and we add both edges and triangles (i.e.,
if both $A$ and $B$ are non-zero).

Thirdly, if $A$ and $B$ are both positive and comparable
but very small, then it is
easy to see 
that $a(\kf)$ is close to $1$, for the simple reason
that the graph 
then consists of rather few (though still order $n$) edges and
triangles, which are 
almost all vertex disjoint. In this case we almost always have either
$D_v=D_w=1$, 
if we pick an edge component, or $D_v=D_w=2$ if we pick an edge of a
triangle. 
This is also easily checked algebraically from \eqref{akaex}: the
denominator is of the form $3AB\gb^5+O((A+B)^3)$, which is asymptotically
equal to the numerator if $A,B\to0$ with $A/B$ bounded above and below.
It follows that as $A$ and $B$ are varied, $a(\kf)$ can take any
value between $0$ and $1$, with $1$ excluded.

Finally, it is easy to check that the form of $a(\kf)$ as a function of $A$, $B$ and $\alpha$
is very different from that of $c_2(A,B,\alpha)$ given in \eqref{cAB}.
It follows that with the degree exponent $\alpha>3$ fixed, if we vary $A$ and $B$
we may vary the clustering coefficient and $a(\kf)$ independently, subject to certain inequalities.

It so happens that in the example considered here, $a(\kf)$ is always non-negative, but it
is easy to give examples where $a(\kf)<0$. Indeed, this arises already in the
edge-only case (of the kind we treated in \cite{BJR}), even
with the very simple type space with two elements of weights
$\mu\set{1}=p$  and $\mu\set{2}=q=1-p$, $0<p<1$, taking
$\ka_2(1,1)=0$, $\ka_2(2,2)=0$ and $\ka_2(1,2)=A>0$. In symbols,
\begin{equation*}
  \ka_2(x,y)=A\ett{x\neq y},
\end{equation*}
where $\ett{\mathcal E}$ is the indicator function of the event $\mathcal E$.

For this kernel (family)
\begin{align*}
  \kae(x,y)&=2\ka_2(x,y)=2A\ett{x\neq y},
\\
\xiek&=\int\ka_2=2Apq,
\\
\sigma_{K_2}(x,y)&=2\ka_2(x,y)=\kae(x,y)=2A\ett{x\neq y}.
\end{align*}
Expanding the integrals as sums, it follows that
\begin{align*}
\ts(K_2,\kf) &= \frac{1}{2} \int \sigma_{K_2}(x,y)\dd\mu(x)\dd\mu(y) 
= \xiek=2Apq;
\\
 \ts(P_2,\kf) &= \frac{1}{2} \int
 \sigma_{K_2}(x,y)\sigma_{K_2}(y,z)\dd\mu(x)\dd\mu(y)\dd\mu(z) 
= 2A^2(pqp+qpq)
\\&
=2A^2pq;
\\
  \ts(K_3,\kf)&=0
\\
 \ts(S_3,\kf)& =\frac16 \int 
 \sigma_{K_2}(x_1,x_2)\sigma_{K_2}(x_1,x_3)\sigma_{K_2}(x_1,x_4)
\dd\mu(x_1)\dd\mu(x_2)\dd\mu(x_3)\dd\mu(x_4) 
\\&
=\frac43 A^3(pq^3+qp^3)
=\frac43 A^3pq(p^2+q^2);
\\
 \ts(P_3,\kf)& 
=\frac12 \int
\sigma_{K_2}(x_1,x_2)\sigma_{K_2}(x_2,x_3)\sigma_{K_2}(x_3,x_4)
\dd\mu(x_1)\dd\mu(x_2)\dd\mu(x_3)\dd\mu(x_4) 
\\&
=4A^3(pqpq+qpqp)=8A^3p^2q^2.
\end{align*}

Substituting these expressions into \eqref{aka} and simplifying, we find that
\[
 a(\kf) =-\frac{A(p-q)^2}{A(p-q)^2+1}.	
\]
Hence $a(\kf)\le 0$, and we have disassortative mixing as soon as $p\neq q$, i.e.,
when $p\in(0,\frac12)\cup(\frac12,1)$. We see also that the
coefficient $a(\kf)$ can be made to take any value in $(-1,0]$ by
choosing the parameters suitably.

One can easily combine the simple example above with that considered 
in the bulk of this section to give
graphs with power-law degree distributions with various values of the clustering
coefficient and of $a(G_n)$, now with negative values of $a(G_n)$ possible.
Perhaps the simplest way of giving such graphs
is to divide the type space $(0,1]$ into two intervals
$I_1=(0,x_0]$ and $I_2=(x_0,1]$, take $\varphi(x)=x^{-1/\alpha}$ on $I_1$
and $\varphi(x)=(x-x_0)^{-1/\alpha}$ on $I_2$,
to set $\ka_2(x,y) = A_1\varphi(x)\varphi(y)$ if one of $x$ is in $I_1$ and the other in $I_2$,
and $\ka_2(x,y)=A_2\varphi(x)\varphi(y)$ otherwise,
and to define $\ka_3(x,y,z)$ to be some constant times $\varphi(x)\varphi(y)\varphi(z)$,
where the constant depends on how many of $x$, $y$ and $z$ lie in $I_1$.

\section{Limits of sparse random graphs}\label{sec_lim}

Although our main focus in this paper was the introduction of the model
$G(n,\kf)$, and the study of the existence and size of the giant component
in this graph,
we shall close by briefly discussing some connections to earlier
work that arise when considering
the local structure of $G(n,\kf)$.

Let us start by considering subgraph counts. As before, let $\G$
consist of one representative of each isomorphism class of finite
graphs, and let $\F\subset \G$ consist of the connected graphs in $\G$.
Given two graphs $F$ and $G$, let $\hom(F,G)$ be the number of
homomorphisms from $F$ to $G$, and $\emb(F,G)$ the number of embeddings,
so $\emb(F,G)=n(F,G)\aut(F)$.  Writing $G_n$ for a graph with $n$
vertices, in the dense case, where $G_n$ has $\Theta(n^2)$ edges, one
can combine the normalized subgraph or embedding counts
\[
 s(F,G_n)=n(F,G_n)/n(F,K_n)=\emb(F,G_n)/\emb(F,K_n)
\]
to define a metric that turns out to
have very nice properties. (Often one uses the equivalent homomorphism
densities $t(F,G_n)=\hom(F,G_n)/n^{|F|}$, but when we come to sparse
graphs embeddings are more natural than homomorphisms.)
A sequence $(G_n)$ converges in this {\em subgraph metric} if and only if there
are constants $s(F)$, $F\in \F$, such that $s(F,G_n)\to s(F)$ for each $F\in \F$.
Lov\'asz and Szegedy~\cite{LSz1} characterised the possible limits
$(s(F))_{F\in \F}$, both in terms of kernels and algebraically.

Borgs, Chayes, Lov\'asz, S\'os and Vesztergombi~\cite{BCLSV:1,BCLSV:2}
introduced the cut metric $\dcut$ that we used in \refS{sec_giant}.
They showed that
this metric is equivalent to the subgraph metric, as well as to various
other notions of convergence for sequences of dense graphs.
One of the nicest features of these results is that for every point
in the completion of the space of finite graphs (with respect
to any of these metrics), there is a natural random graph model
(called a $W$-random graph in~\cite{LSz1}) that produces
sequences of graphs tending to this point.
(See also Diaconis and Janson~\cite{SJ209}, where connections to certain
infinite random graphs are described.)

Turning to sparse graphs, as described in~\cite{BRsparse,BRsparse2}, the situation is much
less simple.
When $G_n$ has $\Theta(n)$ edges, as here,
the natural normalization is to consider, for each connected $F$,
\[
 {\tilde s}(F,G_n) = \emb(F,G_n)/n = \aut(F) n(F,G_n)/n.
\]
Under suitable additional assumptions on the sequences $G_n$, one can
again combine these counts to define a metric, and consider
the possible limit points. Unfortunately, not much is known about these;
see the discussion in~\cite{BRsparse2}.

Turning to our present model, \refT{th_mf} shows that if $\kf$
is a kernel family with only finitely many non-zero kernels and
all moments finite, then ${\tilde s}(F,G_n)\pto \traw(F,\kf)$ for all
connected $F$, where $G_n=G(n,\kf)$ and $\traw(F,\kf)$
is given by \eqref{t1fdef}. This suggests the following question.

\begin{question}\label{q1}
Is there a simple characterization of those vectors $(t_F)_{F\in\F}$
for which there is an integrable kernel family $\kf$ such that $t_F=\traw(F,\kf)$
for all $F\in \F$?
\end{question}

As unbounded kernel families may cause technical difficulties, it may
make sense to ask the same question with the restriction that $\kf$
should be bounded.

Note that \refQ{q1} is very different from the question answered by
Lov\'asz and Szegedy~\cite{LSz1}: our definition of $\traw(F,\kf)$
is different from the corresponding notion studied there,
since it is adapted to the setting of sparse graphs. In particular,
if $\kf$ consists only of a single kernel $\ka_2$ (as in \cite{LSz1}),
then we have $\traw(F,\kf)=0$
for any $F$ that is not a tree.

As discussed in~\cite[Question 8.1]{BRsparse2}, it is an interesting question to ask
whether, for various natural metrics on sparse graphs, one can provide
natural random graph models corresponding to points in the completion.
For those vectors $(t_F)$ where the answer to \refQ{q1} is yes,
the model $G(n,\kf)$ provides an affirmative answer (at least
if $\kf$ is bounded, say). But these points will presumably
only be a very small subset of the possible limits,
so there are many corresponding models
still to be found.

As noted in~\cite[Sections 3,7]{BRsparse2}, rather than
considering subgraph counts ${\tilde s}(F,G_n)$, for graphs with $\Theta(n)$
edges it is more natural to consider directly the probability
that the $t$-neighbourhood of a random vertex $v$ is a certain graph $F$;
the subgraph counts may be viewed as moments of these probabilities.

More precisely, let $\Gr$ be the set of isomorphism classes
of connected, locally finite rooted graphs, and for $t\ge 0$, let $\Grt$ be
the set of isomorphism classes of
finite connected rooted graphs
with {\em radius} at most $t$, i.e., in which all vertices
are within distance $t$ of the root.
A probability distribution $\pi$ on $\Gr$ naturally induces a probability
distribution $\pi_t$ on each $\Grt$, obtained by taking a $\pi$-random element
of $\Gr$ and deleting any vertices at distance more than $t$ from the root.
Given $F\in \Grt$ and a graph $G_n$ with $n$ vertices,
let $p_t(F,G_n)$ be the probability that a random vertex $v$ of $G_n$
has the property that its neighbourhoods up to distance $t$
form a graph isomorphic to $F$, with $v$ as the root.
A sequence $(G_n)$ 
with $|G_n|\to\infty$ 
has {\em local limit} $\pi$ if
\begin{equation*}
 p_t(F,G_n) \to \pi_t(F)
\end{equation*}
for every $F\in \Grt$ and all $t\ge 0$.
This notion has been introduced in several different contexts under
different names: Aldous and Steele~\cite{AS} used the term `local
weak limit', and Aldous and Lyons~\cite{AL} the name `random weak limit'.
Also, Benjamini and Schramm~\cite{BSrec} defined a corresponding
`distributional limit' of certain random graphs.
Notationally it is convenient to map a graph $G_n$ to the
point $\phi(G_n)=(p_t(F,G_n))\in X=\prod_t [0,1]^{\Grt}$, and to define
$\phi(\pi)$ similarly. Taking any metric $d$ on $X$ giving rise to the product topology,
we obtain a metric $\dloc$ on the set of graphs together with probability distributions
on $\Gr$, and $(G_n)$
has local limit $\pi$ if and only if $\dloc(G_n,\pi)\to 0$.

As noted in~\cite{BRsparse2}, under suitable assumptions (which will
hold here if $\kf$ is bounded, for example), the two notions of convergence
described above are equivalent, and one can pass from the limiting normalized
subgraph counts ${\tilde s}(F)$ to the distribution $\pi$ and {\em vice versa}.
Also, if $\ka$ is a bounded kernel, then the random graphs $G(n,\ka)$
defined in~\cite{BJR} have as local limit a certain distribution associated
to $\pi$. This latter observation extends to the present model, and
as we shall now see, no boundedness restriction is needed.

Given an integrable hyperkernel $\kf$, let $G_\kf$ be the random (potentially
infinite) rooted graph associated to the branching process $\bpk$.
This is defined in the natural way: we take the root of $\bpk$ as the root vertex,
for each child clique of the root we take a complete graph in $G_\kf$,
with these cliques sharing only the root vertex. Each child $w$ of the root
then corresponds to a non-root vertex in one of these cliques, and
we add further cliques meeting only in $w$ to correspond to the child cliques
of $w$, and so on.

More generally, given an integrable kernel family $\kf=(\ka_F)_{F\in\F}$,
we may define a random rooted graph $G_\kf$ in an analogous way; we omit the details.
We write $\pi_\kf$ for the probability distribution on $\Gr$
associated to $G_\kf$.

\begin{theorem}\label{lwl}
Let $\kf$ be an integrable kernel family and let $G_n=G(n,\kf)$.
Then $\dloc(G_n,\pi_\kf)\pto 0$.
\end{theorem}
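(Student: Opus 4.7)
Since $\dloc$ metrizes the product topology on $X=\prod_t[0,1]^{\Grt}$ and each $\Grt$ is countable, by dominated convergence it suffices to prove that for every fixed $t\ge0$ and every $F\in\Grt$ we have $p_t(F,\gnk)\pto\pi_{\kf,t}(F)$, where $\pi_{\kf,t}$ is the law of the rooted $t$-ball in $G_\kf$. The strategy is to first handle the bounded case by adapting the local coupling of \refS{sec_loc} from component sizes to rooted isomorphism types of $t$-balls, and then pass to general integrable $\kf$ by truncation, as in \refL{nkint}.

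For bounded $\kf$, fix $F$ and $t$ and pick a vertex $v\in[n]$ uniformly at random, independently of $\gnk$; let $\Gamma_t$ denote the rooted $t$-neighbourhood of $v$ in $\gnk$. Following the construction of \refS{sec_loc} verbatim, embed $\gnk$ in the Poisson model $G(n,\kfpl)$ associated to a constant upper-bound kernel family, and let $\Gamma_t^+$ be the corresponding $t$-neighbourhood. A standard first-moment cycle count shows $\Gamma_t^+$ is whp a ``tree of atoms'' (any two added atoms meeting $\Gamma_t^+$ share at most one vertex, and after contracting atoms the result is a tree); when this holds, $\Gamma_t$ is obtained from $\Gamma_t^+$ by exactly the random-deletion procedure that produces the first $t$ generations of $G_\kf$ inside $G_{\kfpl}$, since the \iid{} vertex types remain \iid{} conditional on $\Gamma_t^+$. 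Coupling $\Gamma_t^+$ with $t$ generations of $G_{\kfpl}$ as in the proof of \refL{nkbdd} then gives $\Pr(\Gamma_t\isom F)\to\pi_{\kf,t}(F)$, i.e.\ $\E p_t(F,\gnk)\to\pi_{\kf,t}(F)$. For concentration, I would run the same coupling from two independent uniform vertices $v_1,v_2$: since $\E|\Gamma_t^+|=O(1)$, whp the two neighbourhoods are vertex-disjoint, so $\Cov(\ett{\Gamma_t(v_1)\isom F},\ett{\Gamma_t(v_2)\isom F})=o(1)$, giving $\Var(p_t(F,\gnk))=o(1)$ and $p_t(F,\gnk)\pto\pi_{\kf,t}(F)$ by Chebyshev.

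For general integrable $\kf$, I would truncate to $\kfM$ as in \refL{nkint} and verify (a) $\pi_{\kfM,t}(F)\to\pi_{\kf,t}(F)$ for each fixed $F,t$, and (b) a local stability bound: for every $\eps>0$, for $M$ large enough, $|p_t(F,\gnk)-p_t(F,\gnkm)|\le\eps+\op(1)$. Claim (a) follows from a monotone-convergence argument on the branching process in the spirit of \refT{TappC}(i), applied at the level of rooted $t$-balls. For (b), couple so that $\gnkm\subseteq\gnk$ and let $V_{\mathrm{bad}}$ be the set of vertices whose rooted $t$-neighbourhoods differ in $\gnkm$ and $\gnk$; a vertex lies in $V_{\mathrm{bad}}$ only if it is joined by a path of length at most $t$ in $\gnk$ to an ``extra'' atom. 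Working inside a further uniform upper-bound model where the expected $t$-neighbourhood size is $O(1)$ uniformly in $n$, a pairing first-moment computation bounds $\E|V_{\mathrm{bad}}|/n$ by $C_t\sum_F|F|\int(\ka_F-\kaM_F)$, which vanishes as $M\to\infty$ by integrability of $\kf$. Combining (a), (b) with the bounded case completes the proof.

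The main obstacle I anticipate is step (b): a single extra atom incident to a high-degree vertex can in principle alter the $t$-neighbourhood of many vertices, so a naive bound of ``number of extra atoms times $O(1)$'' is not available. The clean remedy is to carry out the whole argument inside the uniform upper-bound coupling introduced in \refS{sec_loc}, where typical $t$-neighbourhood sizes are controlled uniformly in $n$ regardless of the specific $\ka_F$, and to apply the first-moment bound to ordered pairs (extra atom, vertex within distance $t$ of it). Once this uniform neighbourhood bound is in hand, the remaining arguments are routine adaptations of those already used for \refL{nkbdd} and \refL{nkint}.
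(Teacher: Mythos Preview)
Your reduction to fixed $t$ and $F$, the treatment of the bounded case via the coupling of \refS{sec_loc} together with a second-moment argument, and claim (a) for the branching-process side are all correct and match the paper. The problem is step (b), and specifically the remedy you propose for the obstacle you yourself identify.

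The ``uniform upper-bound coupling introduced in \refS{sec_loc}'' exists only because the kernel family there is \emph{bounded}: one takes $\kapl_F$ constant equal to $M$ for $|F|\le R$ and zero otherwise, which dominates $\ka_F$ pointwise. For a general integrable $\kf$ there is no bounded majorant, so you cannot embed $\gnk$ in a model with uniformly $O(1)$ expected $t$-neighbourhoods. Worse, \refE{badP2} exhibits an integrable hyperkernel for which $\E D_n^2\to\infty$; in that example the expected size of the $1$-neighbourhood of a fixed vertex in $\gnk$ is unbounded, so no finite constant $C_t$ can make your first-moment bound $\E|V_{\mathrm{bad}}|/n\le C_t\sum_F|F|\int(\ka_F-\kaM_F)$ hold in general. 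The pairing argument you sketch needs exactly a moment bound on neighbourhood sizes in $\gnk$, and integrability alone does not provide one.

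The paper replaces the first-moment argument by a \emph{with-high-probability} expansion bound that avoids moments entirely. \refL{Le2} shows that for any $\eps>0$ there is a $\delta>0$ such that whp every set $A$ of at most $\delta n$ vertices of $\gnk$ satisfies $|N(A)|\le\eps n$; this is proved by replacing each clique atom by a star to obtain an \emph{edge}-integrable kernel family $\kf'$, and then using a $P_2$-count (via \refT{th_ssbd}) to control degree sums in $G(n,\kf')$ as in \refL{Le1}. Iterating $t$ times gives a $\delta>0$ such that whp $|N^t(A)|\le\eps n/2$ whenever $|A|\le\delta n$. One then chooses $M$ so large that $\ii\kf-\ii\kfM\le\delta/2$, lets $B$ be the set of vertices meeting an extra atom, and uses \refL{badv} to get $|B|\le\delta n$ whp. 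Since $V_{\mathrm{bad}}\subseteq N^t_{\gnk}(B)$, this yields $|V_{\mathrm{bad}}|\le\eps n/2$ whp, hence $|p_t(F,G_n)-p_t(F,G_n^M)|\le\eps/2$ whp. The key conceptual point is that one controls the \emph{maximal} expansion of arbitrary small sets whp, rather than the expected expansion of a particular random set; this is what lets the argument go through under integrability alone.
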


The proof of this result, which may be seen as a much stronger form
of \refL{nkint}, will take a little preparation.

In fact, we conjecture that almost sure convergence holds
for any coupling of the $G_n$ for different $n$, and in particular
if the different $G_n$ are taken to be independent.
(The case of independent $G_n$ is the extreme case, which by
standard arguments implies a.s.\ convergence for every other coupling
too; a.s.\ convergence in this case is known as \emph{complete convergence}.)

Writing $\pi_{\kf,t}$ for the probability distribution on $\Grt$
induced by $\pi_\kf$,
by definition we have $\dloc(G_n,\pi_\kf)\pto 0$ if and only if
\begin{equation}\label{loceq}
 p_t(F,G_n)\pto \pi_{\kf,t}(F)
\end{equation}
for each $t$ and each $F\in \Grt$.
The special case where $\kf$ is a bounded hyperkernel is essentially 
immediate:
\eqref{loceq} is simply a formal statement of the local coupling established
for bounded hyperkernels in~\refS{sec_loc}. Exactly the same argument applies to a bounded
kernel family. For the extension to general kernel families we need a couple
of easy lemmas.

\begin{lemma}\label{Le1}
Let $\kf$ be an edge-integrable kernel family.
For any $\eps>0$ there is a $\delta=\delta_1(\kf,\eps)>0$
such that whp any $\delta n$ vertices of $G(n,\kf)$ meet at most $\eps n$ edges.
\end{lemma}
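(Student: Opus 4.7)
First, I would fix $\eps>0$ and reduce to a bounded kernel family via truncation. For $M\ge 1$ let $\kfM$ be the bounded truncation defined in \eqref{trunc}; edge-integrability together with monotone convergence gives $\xie(\kfM)\upto\xie(\kf)$, so $\xie(\kf-\kfM)\to 0$ as $M\to\infty$, and I can pick $M$ so that $\xie(\kf-\kfM)<\eps/4$. Working in the Poisson multigraph formulation (with the simple-graph version equivalent up to negligible adjustments), there is a natural coupling $G(n,\kf)=G(n,\kfM)\cup G(n,\kf-\kfM)$, so any edge of $G(n,\kf)$ meeting a vertex set $S$ is either an edge of $G(n,\kfM)$ meeting $S$ or some edge (anywhere) of $G(n,\kf-\kfM)$. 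By \refT{Tedges} applied to the edge-integrable kernel family $\kf-\kfM$, the total number of edges in $G(n,\kf-\kfM)$ is $\xie(\kf-\kfM)n+\opn\le\eps n/2$ whp, so this contribution is harmless.

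For the bounded part I would apply Cauchy--Schwarz to the degree sequence. Writing $G_n=G(n,\kfM)$ and $d_v$ for the degree of $v$ in $G_n$, the number of edges of $G_n$ meeting a set $S$ is at most $\sum_{v\in S}d_v\le|S|^{1/2}\bigpar{\sum_vd_v^2}^{1/2}$. Counting copies of $P_2$ by middle vertex gives the identity
\[
 \sum_v d_v^2 = 2e(G_n) + 2n(P_2,G_n).
\]
Since $\kfM$ is bounded, \refT{th_ssbd} applied with $F=K_2$ and $F=P_2$ (both connected, so $k=1$) yields $e(G_n),\, n(P_2,G_n)=\Op(n)$, hence $\sum_v d_v^2\le Cn$ whp for some constant $C=C(M,\kf)$. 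Once this second-moment bound holds, Cauchy--Schwarz applies uniformly to every $S$ simultaneously: each $S$ with $|S|\le\delta n$ satisfies $\sum_{v\in S}d_v\le\sqrt{C\delta}\,n$ deterministically.

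Finally, I would choose $\delta=\delta(\eps,M,\kf)>0$ small enough that $\sqrt{C\delta}<\eps/2$. Combining the two contributions, whp every $S\subseteq[n]$ with $|S|\le\delta n$ meets at most $\eps n$ edges of $G(n,\kf)$, as required. The only essential input is \refT{th_ssbd} applied to $F=P_2$, which supplies precisely the second-moment degree control needed by Cauchy--Schwarz in the bounded case; passing from bounded to edge-integrable kernel families is routine truncation, so there is no genuine obstacle.
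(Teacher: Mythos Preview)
Your proof is correct and follows essentially the same approach as the paper: truncate to a bounded kernel family using edge integrability so that the extra edges number at most $\eps n/2$ whp, and in the bounded case use \refT{th_ssbd} with $F=P_2$ to bound $\sum_v d_v^2=O_p(n)$, then apply Cauchy--Schwarz. This is exactly the outline the paper gives (referring to Proposition~8.11 of~\cite{BJR}), so there is nothing to add.
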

\begin{proof}
This is an extension of Proposition 8.11 of~\cite{BJR}; the proof carries
over {\em mutatis mutandis}, using \refT{th_ssbd} with $F=P_2$ to bound the
sum of the squares of the vertex degrees in the bounded case.
The key step is to use
edge integrability to find a bounded kernel family $\kf'$ such
that $G(n,\kf')$ may be regarded as a subgraph of $G(n,\kf)$
containing all but at most $\eps n/2+\op(n)$ of the edges.
\end{proof}

It turns out that we can weaken edge integrability to integrability.
The price we pay is that we cannot control the number
of edges incident to a small set of vertices, but only the size
of the neighbourhood. As usual, given a set $A$ of vertices in a graph
$G$, we write $N^t(A)$ for the set of vertices at graph distance at most
$t$ from $A$, so $A\subset N(A)=N^1(A)\subset N^2(A)\cdots$.

\begin{lemma}\label{Le2}
Let $\kf$ be an integrable kernel family.
For any $\eps>0$ there is a $\delta=\delta_2(\kf,\eps)>0$
such that whp every set $A$ of at most $\delta n$ vertices of $G(n,\kf)$
satisfies $|N(A)|\le \eps n$.
\end{lemma}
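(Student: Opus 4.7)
The plan is to adapt the proof of \refL{Le1} by splitting the atoms into ``small'' (size at most $M$) and ``large'' (size $>M$) parts, and handling them separately. The point is that although integrability is strictly weaker than edge integrability, the \emph{size-truncated} family $\kf'=(\ka'_F)$ defined by $\ka'_F=\ka_F$ if $|F|\le M$ and $\ka'_F=0$ otherwise is always edge integrable when $\kf$ is integrable: any connected $F$ with $|F|\le M$ has $e(F)\le\binom{|F|}{2}\le M|F|/2$, so
\[
 \sum_F e(F)\int\ka'_F \;\le\; \frac{M}{2}\sum_F|F|\int\ka_F \;=\; \frac{M}{2}\,\ii{\kf} \;<\;\infty.
\]
Applying \refL{Le1} to $\kf'$ (with $\eps/2$ in place of $\eps$), I would obtain $\delta_1>0$ such that whp every set of at most $\delta_1 n$ vertices of $G(n,\kf')$ is incident to at most $\eps n/2$ edges, which in particular bounds $|N_{G(n,\kf')}(A)\setminus A|$.

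Next, I would couple $G(n,\kf')$ and $G_n=G(n,\kf)$ in the obvious way using common vertex types, so that $G(n,\kf')\subseteq G_n$ and every atom of $G_n$ that is not an atom of $G(n,\kf')$ has size $>M$. Writing $V^+$ for the (random) union of the vertex sets of these large atoms, for any $A\subseteq[n]$ we have
\[
 N_{G_n}(A)\setminus A \;\subseteq\; \bigpar{N_{G(n,\kf')}(A)\setminus A}\cup V^+,
\]
since every edge of $G_n$ incident to $A$ either lies in $G(n,\kf')$ or is contained in a large atom.

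The key point is that $|V^+|$ can be bounded uniformly in $A$. The expected number of atoms of type $F$ in $G_n$ is at most $n\int\ka_F$, and each contributes $|F|$ vertices to $V^+$ if $|F|>M$, so
\[
 \E|V^+| \;\le\; n\sum_{F:|F|>M}|F|\int\ka_F.
\]
By integrability, the tail $\sum_{F:|F|>M}|F|\int\ka_F\to 0$ as $M\to\infty$; I would choose $M$ large enough that this tail is below $\eps/4$, so that $\E|V^+|<\eps n/4$ and Markov's inequality gives $|V^+|\le\eps n/2$ whp, independently of $A$.

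Combining the two bounds with $\delta=\min(\delta_1,\eps/2)$, every $A$ with $|A|\le\delta n$ satisfies $|N_{G_n}(A)|\le|A|+\eps n/2+\eps n/2\le\tfrac{3}{2}\eps n$ whp, and a harmless rescaling of $\eps$ yields the lemma. The only step that requires any thought is the edge-integrability of $\kf'$, which is what unlocks \refL{Le1}; the rest is a routine truncation-and-Markov argument. Unlike in \refL{Le1}, one cannot bound the number of edges incident to $A$ under the integrability assumption alone, but this does not matter: the contribution of large atoms to $|N(A)|$ is controlled by their \emph{total} size, which is small precisely because of integrability.
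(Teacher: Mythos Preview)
Your overall strategy is sound and is genuinely different from the paper's, but there is a gap where you invoke Markov's inequality. From $\E|V^+|<\eps n/4$, Markov only yields $\Pr(|V^+|>\eps n/2)<1/2$, a fixed constant bound; this is not ``whp''. You cannot repair this by letting $M\to\infty$ with $n$, since your $\delta$ depends on $M$ through $\delta_1(\kf',\cdot)$ and must be fixed. The easy fix is to replace Markov by \refL{badv} applied to the kernel family $\kf-\kf'$: this gives $|V^+|\le v\bigl(G(n,\kf-\kf')\bigr)=n\cdot\sum_{|F|>M}|F|\int\ka_F+\op(n)$, which is below $\eps n/2$ whp once the tail is below $\eps/4$. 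With that one change your argument goes through.

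For comparison, the paper takes a different route. It first passes to the associated hyperkernel (replacing each atom by a clique), and then replaces each clique $K_r$ by a star $K_{1,r-1}$, obtaining a kernel family $\kf^\star$ that is edge integrable simply because a star on $r$ vertices has $r-1$ edges. It then applies \refL{Le1} \emph{twice} to $G(n,\kf^\star)$, taking $\delta=\delta_1(\delta_1(\eps))$, and uses the observation that vertices adjacent in the clique version are at distance at most~$2$ in the star version, so $N_{G(n,\kf)}(A)\subseteq N^2_{G(n,\kf^\star)}(A)$. Your size-truncation argument is more direct---no iteration of \refL{Le1} and no passage to stars---but it needs the extra concentration input to handle the large atoms. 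Both proofs ultimately exploit the same idea, namely that integrability allows one to manufacture an edge-integrable approximation and thereby invoke \refL{Le1}; they just realise it differently.
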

\begin{proof}
Replacing each atom by a clique, we may and shall assume
that $\kf$ is a hyperkernel.
Let $\kf'$ be the kernel family obtained from $\kf$ by replacing
each clique by a star.
Since $\kf$ is integrable, $\kf'$ is edge integrable.
Let $\delta_1(\eps)=\delta_1(\kf',\eps)$ be the function given
by \refL{Le1}, and set $\delta=\delta_1(\delta_1(\eps))>0$.
Then whp every set $A$ of at most $\delta n$ vertices of $G(n,\kf')$
has $|N(A)|\le \delta_1(\eps) n$ and hence $|N^2(A)|\le \eps n$.
Coupling $G(n,\kf)$ and $G(n,\kf')$ in the obvious way,
vertices adjacent in $G(n,\kf)$ are at distance at most $2$
in $G(n,\kf')$, and the result follows.
\end{proof}

Let $v(G(n,\kf))$ be the sum of the sizes (numbers of vertices)
of the atoms making up $G(n,\kf)$.
Our final lemma relates this sum  
to
$\ii{\kf} = \sum_F |F|\int_{\sss^{|F|}} \ka_F$.

\begin{lemma}\label{badv}
Let $\kf=(\ka_F)_{F\in\F}$ be an integrable kernel
family. 
Then $v(G(n,\kf))=n\ii{\kf} +\op(n)$.
\end{lemma}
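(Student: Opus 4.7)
My plan is to combine a direct expectation computation with a variance bound obtained via truncation, closely following the scheme used for the number of edges in \refT{Tedges}.

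First, I would write $v(G(n,\kf))=\sum_{F\in\F}|F|\,N_F$, where $N_F$ is the number of atoms of type $F$ added to \gnk. Since each $(F,(v_1,\dots,v_{|F|}))$ is processed independently with acceptance probability given by \eqref{pdef}, conditioning on the types and averaging gives
\[
 \E N_F =\frac{n\fall{|F|}}{n^{|F|-1}}\int_{\sss^{|F|}}(\ka_F\wedge n^{|F|-1})
\le n\int_{\sss^{|F|}}\ka_F.
\]
For each fixed $F$, $\frac{n\fall{|F|}}{n^{|F|-1}}/n\to 1$, while the truncation $\ka_F\wedge n^{|F|-1}$ increases to $\ka_F$ pointwise. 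By dominated convergence (using integrability), $\E v(\gnk)/n\to\ii{\kf}$.

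Second, concentration. Let $\kfM$ be the truncated bounded kernel family defined by \eqref{trunc}, and couple so that $G(n,\kfM)\subseteq\gnk$ in the obvious way (same types $x_i$, independent thinning of added atoms). For bounded $\kfM$ only finitely many $F$ contribute, all with $|F|\le M$, and $v(G(n,\kfM))$ is a sum of independent random variables weighted by $|F|\le M$. Therefore
\[
 \Var\bigpar{v(G(n,\kfM))}\le M\,\E v(G(n,\kfM)) = O(n),
\]
so by Chebyshev $v(G(n,\kfM))=\E v(G(n,\kfM))+\op(n)$, and by the first step $\E v(G(n,\kfM))/n\to\ii{\kfM}$. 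Hence $v(G(n,\kfM))/n\pto\ii{\kfM}$.

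Third, I would control the truncation error uniformly in $n$. The difference $\Delta_M=v(\gnk)-v(G(n,\kfM))\ge 0$ satisfies
\[
 \E\Delta_M\le n\bigpar{\ii{\kf}-\ii{\kfM}},
\]
and by monotone convergence $\ii{\kfM}\upto\ii{\kf}$ as $M\to\infty$. Given $\eps>0$, choose $M$ so large that $\ii{\kf}-\ii{\kfM}<\eps^2$ and $|\ii{\kf}-\ii{\kfM}|<\eps$; then by Markov's inequality $\Pr(\Delta_M\ge\eps n)\le\eps$. Combining with the bounded case,
\[
 \bm{v(\gnk)/n-\ii{\kf}}\le \bm{v(G(n,\kfM))/n-\ii{\kfM}}+\bm{\ii{\kfM}-\ii{\kf}}+\Delta_M/n,
\]
so for $n$ large the left side is at most $3\eps$ with probability at least $1-2\eps$. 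Since $\eps$ was arbitrary, $v(\gnk)/n\pto\ii{\kf}$, which is the claim. There is no serious obstacle here: the only technical point is the uniform-in-$n$ truncation bound, which is immediate from integrability. (The Poisson multigraph version of the model is handled identically, with $\Var=\E$ for each Poisson count.)
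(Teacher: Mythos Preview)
Your argument is correct. One small imprecision: $v(G(n,\kfM))$ is a sum of independent random variables only \emph{conditionally on the types $\vx$}; unconditionally the atom indicators are coupled through the shared $x_i$. The fix is immediate: conditionally $\Var(v\mid\vx)\le M\,\E(v\mid\vx)$, so $\E\Var(v\mid\vx)\le M\,\E v=O(n)$, while $\Var\bigpar{\E(v\mid\vx)}=O(n)$ for bounded kernels by a direct covariance count over pairs of tuples sharing at least one index (or by Efron--Stein, since changing one $x_i$ alters $\E(v\mid\vx)$ by $O(1)$). Your Chebyshev step then goes through unchanged.

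The paper's proof is organized differently. Instead of truncating to a bounded kernel family and using a two-sided variance bound, it argues the two directions separately. For the lower bound it truncates only by atom \emph{size}, writing $X\ge\sum_{r\le R}rX_r$, and uses that each $X_r$ is (conditionally) Poisson, hence concentrated, to obtain $X\ge(c-2\eps)n$ whp. For the upper bound it avoids any second-moment computation: from the lower bound, $(c-X/n)_+\pto0$; since this is bounded, $\E(c-X/n)_+\to0$; combined with $\E X\le cn$ this gives $\E(X/n-c)_+=\E(X/n-c)+\E(c-X/n)_+\to0$, hence $(X/n-c)_+\pto0$. Your approach is the more systematic truncate--Chebyshev--Markov scheme; the paper's trick trades the variance bound for an $L^1$ argument via the positive part, which is slightly slicker but requires the asymmetric detour.
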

\begin{proof}
Let $X_r$ be the number of atoms with $r$ vertices,
and $X=v(G(n,\kf))=\sum_{r\ge 2} r X_r$.
Let $c=\ii{\kf}$, and let
$c_r$ be the contribution to $\ii{\kf}$ from kernels
corresponding to graphs $F$ with $r$ vertices,
so $\E(r X_r)=(n)_r c_r/n^{r-1}\sim nc_r$ and $\sum c_r=c<\infty$.
Given $\eps>0$, there is an $R$ such that $\sum_{r\le R} c_r\ge c-\eps$.
Each $X_r$ has a Poisson distribution and is thus concentrated about
its mean, so whp
\[
 X \ge \sum_{r\le R} rX_r 
\ge \sum_{r\le R} c_rn- \eps n
\ge (c-2\eps)n.
\]
Writing $(x)_+$ for $\max\{x,0\}$, since $\eps$ was arbitrary we have shown that
$(c-X/n)_+\pto 0$. Since $c-X/n$ is bounded, it follows that
$\E(c-X/n)_+\to 0$. But $\E X\le cn$, so
$\E(X/n-c)_+ = \E(X/n-c) + \E(c-X/n)_+ \to 0$. Hence $(X/n-c)_+\pto 0$,
so $X/n\pto c$ as claimed.
\end{proof}

Combining the last two lemmas, we can now prove \refT{lwl}.
\begin{proof}[Proof of \refT{lwl}]
As noted after the statement of the theorem, the case where $\kf$
is bounded is straightforward.

Let $\kf$ be an integrable kernel family, and let $G_n=G(n,\kf)$.
Fix $t\ge 1$, $F\in \Grt$, and $\eps>0$. It suffices to prove that
\begin{equation}\label{cc}
  | p_t(F,G_n) - \pi_{\kf,t}(F) | \le \eps +\op(1).
\end{equation}
Then letting $\eps\to 0$ we have $p_t(F,G_n)\pto \pi_{\kf,t}(F)$,
so \eqref{loceq} holds. Since $t$ and $F$ are arbitrary,
this implies $\dloc(G_n,\pi_\kf)\pto 0$.

Applying \refL{Le2} $t$ times, there is a $\delta>0$ such that
whp any set $A$ of at most $\delta n$ vertices of $G_n$
satisfies $|N^t(A)|\le \eps n/2$.
Since $\kf$ is integrable, there is a bounded kernel family $\kfM$
which satisfies $\kfM\le \kf$ pointwise and  $\ii\kf-\ii\kfM\le \delta/2$.
As $M\to\infty$, we have $\kfM\upto \kf$ pointwise,
and it follows that $\pi_{\kfM,t}(F)\to \pi_{\kf,t}(F)$; the argument is as for \refT{TappC}(i).
Taking $M$ large enough, we may thus assume that $\bigabs{\pi_{\kfM,t}(F)-\pi_{\kf,t}(F)}\le \eps/2$.
Let $G_n'=G(n,\kfM)$. Since $\kfM$ is bounded, we have $p_t(F,G_n')\pto \pi_{\kfM,t}(F)$.
Coupling $G_n$ and $G_n'$ as usual so that $G_n'\subset G_n$,
let $B$ be the set of vertices incident with an atom present
in $G_n$ but not $G_n'$. By \refL{badv} we have $|B|\le \delta n$ whp,
so whp no more than $\eps n/2$ vertices are within distance $t$
of vertices in $B$.
But then $|p_t(F,G_n)-p_t(F,G_n')|\le \eps/2$ whp, and \eqref{cc} follows.
\end{proof}

The general question of which probability distributions on $\Gr$ arise
as local limits of sequences of finite graphs seems to be rather difficult.
There is a natural necessary condition noted in different
forms in all of~\cite{AL,AS,BSrec}; see also \cite[Section 7]{BRsparse2}.
Aldous and Lyons~\cite{AL} asked whether this condition is sufficient,
emphasizing the importance of this open question.
Let us finish with a related but perhaps much simpler question:
given $\kf$, we defined $\bpk$ as a branching process in which the particles
have types. But in the corresponding random graph  $G_\kf$ these types
are not recorded. This means that $\kf$ cannot simply be read out
of the distribution of $G_\kf$, i.e., out of $\pi_\kf$. This suggests the following question.

\begin{question}
Which probability distributions on $\Gr$ are of the form $\pi_\kf$ for some
integrable kernel family $\kf$?
\end{question}

\begin{ack}
Part of this research was done during visits of SJ
to the University of Cambridge and Trinity College in 2007,
and
to the Isaac Newton Institute in Cambridge,
 funded by a Microsoft fellowship,
in 2008.
\end{ack}

\newcommand\vol{\textbf}
\newcommand\jour{\emph}
\newcommand\book{\emph}
\newcommand\inbook{\emph}
\newcommand\toappear{\unskip, to appear}

\newcommand\webcite[1]{
\texttt{\def~{{\tiny$\sim$}}#1}\hfill\hfill}
\newcommand\webcitesvante{\webcite{http://www.math.uu.se/~svante/papers/}}
\newcommand\arxiv[1]{\webcite{arXiv:#1.}}

\def\nobibitem#1\par{}

\end{document}